
\documentclass[12pt]{amsart}
\usepackage{amsmath, amsfonts, amsbsy, amsthm, amscd, graphicx}
\usepackage{amssymb, latexsym, color}

%
\setlength{\textheight}{9in}
\addtolength{\textwidth}{.815in}
\addtolength{\oddsidemargin}{-.5in}
\addtolength{\topmargin}{-.5in}
\addtolength{\evensidemargin}{-.5in}
\addtolength{\footskip}{.5in}

\pagestyle{plain}

%
\numberwithin{equation}{section}

\theoremstyle{plain}
\newtheorem{Theorem}{Theorem}[section]
\newtheorem{Proposition}[Theorem]{Proposition}
\newtheorem{Lemma}[Theorem]{Lemma}
\newtheorem{Corollary}[Theorem]{Corollary}

\theoremstyle{definition}
\newtheorem{Definition}[Theorem]{Definition}

\theoremstyle{remark}
\newtheorem{Remark}{{\bf Remark}}
\newtheorem{Example}{Example}

%

%

\newcommand{\C}{{\mathbb C}}
\newcommand{\R}{{\mathbb R}}
\newcommand{\Q}{{\mathbb Q}}
\newcommand{\F}{{\mathbb F}}
\newcommand{\Z}{{\mathbb Z}}
\newcommand{\N}{{\mathbb N}}

\newcommand{\isom}{\mathop{\rm Isom}\nolimits}

\newcommand{\bary}{\mathop{\rm bar}\nolimits}

\newcommand{\diam}{\mathop{\rm diam}\nolimits}

\newcommand{\cat}{\mathrm{CAT}(0)}

\newcommand{\girth}{\mathrm{girth}} 
\newcommand{\Dirac}{\mathrm{Dirac}}


\begin{document}

\title{$N$-step energy of 
maps and fixed-point property of 
random groups} 

\author{Hiroyasu Izeki$\mbox{}^1$} 
\thanks{$\mbox{}^1$Department of Mathematics, Faculty of Science and Technology, Keio University, 
Kohoku-ku, Yokohama 223-8522, Japan, izeki@math.keio.ac.jp}

\author{Takefumi Kondo$\mbox{}^2$}
\thanks{$\mbox{}^2$Department of Mathematics, Graduate School of Science, Kobe University, 
Kobe 657-8501, Japan, takefumi@math.kobe-u.ac.jp} 

\author{Shin Nayatani$\mbox{}^3$*}
\thanks{$\mbox{}^3$Graduate School of Mathematics, Nagoya University,
Chikusa-ku, Nagoya 464-8602, Japan, nayatani@math.nagoya-u.ac.jp}
\thanks{* Corresponding author}
\subjclass[2010]{Primary~20F65; Secondary~58E20, 20P05.}
\keywords{
finitely generated group, 
random group, 
$\cat$ space, 
fixed-point property, 
energy of map, 
Wang invariant, 
expander, 
Euclidean building 
}


\maketitle

\begin{abstract}
We prove that a random group of the graph model associated with a sequence of expanders 
has fixed-point property for a certain class of $\cat$ spaces. 
We use Gromov's criterion for fixed-point property in terms of the growth 
of $n$-step energy of equivariant maps from a finitely generated group into a $\cat$ space, 
to which we give a detailed proof.  
We estimate a relevant geometric invariant of the tangent cones of the Euclidean buildings 
associated with the groups $\mathrm{PGL}(m,\Q_r)$, and deduce from the general result above that
the same random group has fixed-point property for all of these Euclidean buildings 
with $m$ bounded from above. 
\end{abstract} 

\section*{Introduction} 

Random groups were introduced by Gromov \cite{Gromov2} as a framework in which he justified his previous claim 
that `most' discrete groups are hyperbolic \cite{Gromov1}. 
While this standard model, called the density model, of random groups has been actively studied, 
Gromov \cite{Gromov3} introduced another model, called the graph model, of random groups, in search for 
infinite groups which cannot be uniformly embedded into Hilbert spaces, thereby being a counterexample 
to a version of Baum-Connes conjecture. 
Note that the graph model is formed by choosing an infinite sequence of finite graphs with increasing vertices, 
and Gromov chose
a sequence of (bounded-degree) expanders, satisfying some additional conditions 
so that the corresponding random group was
non-elementary  hyperbolic, hence infinite. 
Throughout the introduction, we assume that this choice is made and fixed. 
In the same paper, Gromov claimed that a random group of the graph model had fixed-point property for 
all Hadamard manifolds (possibly of infinite dimensions). 
Here we say that a group $\Gamma$ has fixed-point property for a metric space $Y$ if for any homomorphism 
$\rho\colon \Gamma \rightarrow \isom(Y)$, $\rho(\Gamma)$ has a global fixed point in $Y$. 
If $\mathcal{Y}$ is a family of metric spaces and $\Gamma$ has fixed-point property for all members of $\mathcal{Y}$, 
we say that $\Gamma$ has fixed-point property for $\mathcal{Y}$. 
Silberman \cite{Silberman} then rigorously proved that the same random group 
had fixed-point property 
for Hilbert spaces, which was equivalent to saying that the random group had Kazhdan's property (T). 
We refer the reader to Ollivier's monograph \cite{Ollivier} for extensive information on random groups. 

In the present paper, we prove that a random group of the graph model has fixed-point property 
for a certain class of $\cat$ spaces, including all Hadamard manifolds. 
We therefore justify the above mentioned claim of Gromov in a generalized form. 
To state our main result more in detail, recall that we \cite{Izeki-Nayatani} introduced a certain geometric invariant, 
denoted by $\delta$, of $\cat$ space which takes values in the interval $[0,1]$. 
It is worth mentioning that the invariant of a $\cat$ space can be computed as the supremum of 
its values for all tangent cones of the space. 
For $0\leq \delta_0< 1$, let $\mathcal{Y}_{\leq\delta_0}$ denote the class of $\cat$ spaces $Y$ satisfying 
$\delta(Y)\leq \delta_0$, or equivalently $\delta(TC_pY)\leq \delta_0$ for all $p\in Y$. 
Then a random group is infinite hyperbolic and has fixed-point property for all members of $\mathcal{Y}_{\leq\delta_0}$. 
This result is compared to the authors' previous result that if $\delta_0<1/2$, a random group in Zuk's triangular model 
has fixed-point property for all members of $\mathcal{Y}_{\leq\delta_0}$. 

As in Silberman's, our proof is built of two parts, one geometric and the other probabilistic. 
The probalisitic part follows Silberman's argument mostly verbatim, but our presentation has some advantages. 
First, we simplify his argument 
by replacing the large deviation inequality for the Bernoulli walk he used by the central limit theorem. 
This also enables us to allow degree two vertices in the graphs and therefore state our result in a form 
applicable to subdivided expanders. 
Secondly, we generalize Silberman's spectal gap inequality for maps from a finite graph into a Hilbert space 
to the inequality for maps with $\cat$ targets, and this 
enables us to state the result for more general $\cat$ spaces than Hilbert spaces. 
The geometric part of the proof is completely different from Silberman's; we use Gromov's criterion 
for fixed-point property in terms of the growth of $n$-step energy of equivariant maps from a group 
into a $\cat$ space. 
Since Gromov does not give a detailed proof to this result, we undertake to do so. 

With the general result at hand, it is important to compute or estimate from above the invariant $\delta$ 
of a $\cat$ metric cone. 
To do this, we relate it to a modified version of distortion of the cone, which we call the radial distortion. 

The following fact is well-known. 
Suppose that a discrete group $\Gamma$ has fixed-point property for Hilbert spaces, 
all symmetric spaces associated with the groups $\mathrm{PGL}(m, \R)$ and $\mathrm{PGL}(m, \C)$, and all 
Euclidean buildings associated with the groups $\mathrm{PGL}(m,\Q_r)$ with $r$ prime. 
Then $\Gamma$ is nonlinear, in the sense that it admits no faituful linear representation, and more strongly, 
any finite-dimensional linear representation of $\Gamma$ has finite image. 
It is therefore interesting to see which of these spaces are in the above class of $\cat$ spaces. 
Since Hilbert spaces and the symmetric spaces are Hadamard manifolds and hence trivially belong to this class,  
it remains to investigate the Euclidean buildings. 
As mentioned above, the estimation of the invariant $\delta$ of their tangent cones can be reduced to that of 
the radial distortion of these cones. 
By carrying out the latter task, we conclude that the invariant $\delta$ of the tangent cones of the Euclidean buildings 
are bounded from above by a constant less than one which depends only on the dimensions of the buildings. 
Combining this and the general result above, we finally conclude that a random group of the graph model has 
fixed-point property for all of the Euclidean buildings with dimensions bounded from above by a positive integer, 
specified in advance. 

This paper is organized as follows. 
In Section 1 we prepare some definitions, notations and results concerning $\cat$ spaces which are necessary 
in later sections. 
In Section 2 under the situation that an isometric action of a finitely generated group on a $\cat$ space 
is given, we study the $n$-step energy of an equivariant map from the group into the $\cat$ space. 
After some preliminaries, we give a proof to Gromov's result which states that if the $n$-step energy grows 
strictly slower than $n$ times the (single-step) energy, then the action is forced to have a global fixed point. 
In fact, we prove the result in a slightly generalized form, which will be necessary in the next section. 
We then treat the special case that the target space is an affine Hilbert space, and conclude this section by proving 
a sort of converse of Gromov's result holds for some $\cat$ spaces. 
In Section 3 we first recall the formalism of the graph model of random groups, and then prove a fixed-point theorem 
for random groups, a version of which will be stated in terms of the invariant $\delta$. 
In Section 4 we first give an upper bound for the invariant $\delta$ of a $\cat$ metric cone, restricted to 
measures with barycenter at the cone point, in terms of the radial distortion of the cone. 
We then estimate $\delta$ of the tangent cones of the Euclidean buildings associated with the groups 
$\mathrm{PGL}(m,\Q_r)$, by estimating the radial distortion of the cones. 

Part of this paper was announced in \cite{Izeki, Kondo1}. 

\section{Preliminaries on $\cat$ spaces}

In this section, we briefly recall some definitions and results
concerning $\cat$ spaces.  We refer the reader to \cite{Bridson-Haefliger} for a
detailed exposition on the subject.  We follow the notations
used in \cite[\S 1]{Izeki-Nayatani}. 

Let $Y$ be a metric space and $p,q \in Y$.
A {\it geodesic} joining $p$ to $q$ is 
a map $c \colon [0,l] \longrightarrow Y$ satisfying 
$c(0)=p$, $c(l)=q$ and 
$d(c(t),c(t'))=|t-t'|d(p,q)/l$ for any $t,t' \in [0,l]$. 
A geodesic $c \colon [0,l] \longrightarrow Y$ with $l=d(c(0),c(l))$ is
called {\it unit speed}; a unit speed geodesic is nothing but an
isometric embedding of an interval. 
We say that $Y$ is a {\it geodesic space} if any two points in $Y$ 
are joined by a geodesic.  

Consider a triangle in $Y$ whose vertices are $p_1,p_2,p_3 \in Y$ and 
sides are three geodesic segments $p_1p_2, p_2p_3, p_3p_1$ joining
pairs of these vertices. 
We denote this triangle by $\Delta(p_1,p_2,p_3)$ and
call such a triangle a {\it geodesic triangle}.  
Take
a triangle $\Delta(\overline{p_1},\overline{p_2},\overline{p_3})$ in
$\R^2$ with the same side lengths: 
$d_{\R^2}(\overline{p_i},\overline{p_j})=d_Y(p_i, p_j)$.
We call $\Delta(\overline{p_1},\overline{p_2},\overline{p_3})$ a 
{\it comparison triangle} for $\Delta(p_1,p_2,p_3)$. 
A point $\overline{q}\in \overline{p_i}\overline{p_j}$
is called a {\it comparison point} for $q\in p_ip_j$ if 
$d_Y(p_i,q) = d_{\R^2}(\overline{p_i}, \overline{q})$.
A geodesic triangle $\Delta(p_1,p_2,p_3)$ in $Y$ is said to satisfy the
{\it $\cat$ condition} if 
$d_Y(q_1,q_2) \leq d_{\R^2}(\overline{q_1}, \overline{q_2})$ for any
pair of points $q_1, q_2$ on the sides of $\Delta(p_1,p_2,p_3)$ and
their comparison points $\overline{q_1}, \overline{q_2}$. 
A geodesic space $Y$ is called a {\it $\cat$ space} if 
every geodesic triangle in $Y$ satisfies the $\cat$ condition. 
Roughly speaking, a $\cat$ space is a geodesic space all of whose
geodesic triangles are thinner than Euclidean triangles. 


Note that, for a $\cat$ space $Y$, the uniqueness of a unit
speed geodesic joining any pair of points in $Y$, and the
contractibility of $Y$ immediately follow from the definition. 
Throughout this paper, we assume metric spaces under consideration are 
complete. (We should point out here that a complete $\cat$ space was 
given a distinguished name `Hadamard space' in
\cite{Izeki-Nayatani}, though we will not use this terminology in the
present paper.)  

The following is a characterization of $\cat$ spaces, which is often used
as an alternative definition of $\cat$ spaces. 

\begin{Proposition}{\cite
{Bridson-Haefliger}}
\label{CAT(0)_ineq}
 A geodesic space $Y$ is a $\cat$ space if and only if,  
 for any $p \in Y$ and any 
 geodesic $c \colon [0,1]\longrightarrow Y$, 
 \begin{equation*}
  d(p, c(t))^2 \leq 
  (1-t)d(p,c(0))^2 + td(p,c(1))^2 - t(1-t)d(c(0),c(1))^2
 \end{equation*}
 holds. 
\end{Proposition}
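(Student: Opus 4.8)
The plan is to deduce both implications from a single elementary identity: for points $\overline{a},\overline{b},\overline{p}\in\R^2$ and $t\in[0,1]$, the point $\overline{m}:=(1-t)\overline{a}+t\overline{b}$ on the segment $\overline{a}\,\overline{b}$ satisfies
\begin{equation*}
 d_{\R^2}(\overline{p},\overline{m})^2=(1-t)\,d_{\R^2}(\overline{p},\overline{a})^2+t\,d_{\R^2}(\overline{p},\overline{b})^2-t(1-t)\,d_{\R^2}(\overline{a},\overline{b})^2 .
\end{equation*}
This is checked by writing $\overline{p}-\overline{m}=(1-t)(\overline{p}-\overline{a})+t(\overline{p}-\overline{b})$, squaring, and substituting $2\langle\overline{p}-\overline{a},\overline{p}-\overline{b}\rangle=d_{\R^2}(\overline{p},\overline{a})^2+d_{\R^2}(\overline{p},\overline{b})^2-d_{\R^2}(\overline{a},\overline{b})^2$.

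For the ``only if'' direction, assume $Y$ is a $\cat$ space, fix $p\in Y$ and a geodesic $c\colon[0,1]\to Y$, and consider the geodesic triangle $\Delta(p,c(0),c(1))$ together with a comparison triangle $\Delta(\overline{p},\overline{c(0)},\overline{c(1)})$. Because $c$ is parametrized proportionally to arclength and the comparison triangle has the same side lengths, the comparison point of $c(t)$ is $\overline{m}=(1-t)\overline{c(0)}+t\,\overline{c(1)}$. The $\cat$ condition gives $d_Y(p,c(t))\le d_{\R^2}(\overline{p},\overline{m})$, and feeding the identity (with $d_{\R^2}(\overline{p},\overline{c(i)})=d_Y(p,c(i))$ and $d_{\R^2}(\overline{c(0)},\overline{c(1)})=d_Y(c(0),c(1))$) into the right-hand side produces exactly the asserted inequality.

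For the ``if'' direction, assume the inequality holds for every $p$ and every geodesic, let $\Delta(p_1,p_2,p_3)$ be a geodesic triangle with comparison triangle $\Delta(\overline{p_1},\overline{p_2},\overline{p_3})$, and take points $q_i$ on the sides; we must show $d_Y(q_1,q_2)\le d_{\R^2}(\overline{q_1},\overline{q_2})$. I would argue in two steps. First, the case of a vertex and a point on the opposite side: for $q\in p_2p_3$, apply the hypothesis with $p=p_1$ and $c$ the side $p_2p_3$ parametrized on $[0,1]$; the right-hand side equals $d_{\R^2}(\overline{p_1},\overline{q})^2$ by the identity, so $d_Y(p_1,q)\le d_{\R^2}(\overline{p_1},\overline{q})$. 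Second, the general case with $q_1\in p_1p_2$ and $q_2\in p_1p_3$ (two points on a common side being trivial, since the comparison restricts to an isometry there): apply the hypothesis with $p=q_2$ and $c$ the side $p_1p_2$, and set $s=d_Y(p_1,q_1)/d_Y(p_1,p_2)$, so that
\begin{equation*}
 d_Y(q_1,q_2)^2\le(1-s)\,d_Y(q_2,p_1)^2+s\,d_Y(q_2,p_2)^2-s(1-s)\,d_Y(p_1,p_2)^2 .
\end{equation*}
Then use $d_Y(q_2,p_1)=d_{\R^2}(\overline{q_2},\overline{p_1})$ and $d_Y(p_1,p_2)=d_{\R^2}(\overline{p_1},\overline{p_2})$, bound $d_Y(q_2,p_2)\le d_{\R^2}(\overline{q_2},\overline{p_2})$ by the first step, and apply the identity with $\overline{p}=\overline{q_2}$, $\overline{a}=\overline{p_1}$, $\overline{b}=\overline{p_2}$; since $\overline{q_1}=(1-s)\overline{p_1}+s\overline{p_2}$, the right-hand side collapses to $d_{\R^2}(\overline{q_1},\overline{q_2})^2$, giving the claim.

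This equivalence is classical, so I do not expect a genuine obstacle; the only care required is bookkeeping. One must verify that comparison points are the affine combinations claimed (using that comparison triangles have equal side lengths and geodesics are parametrized proportionally to arclength), observe that substituting $d_Y(q_2,p_2)\le d_{\R^2}(\overline{q_2},\overline{p_2})$ is legitimate because its coefficient $s$ is nonnegative, and separately dispose of degenerate configurations such as a triangle contained in a single geodesic or two of the chosen points coinciding.
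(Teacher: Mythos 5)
Your proof is correct, and note that the paper itself offers no argument here: Proposition \ref{CAT(0)_ineq} is simply quoted from Bridson--Haefliger, so there is no in-paper proof to compare against. Your route --- the Euclidean identity for $d(\overline{p},(1-t)\overline{a}+t\overline{b})^2$, the comparison-triangle translation for the ``only if'' direction, and the two-stage application of the assumed inequality (vertex against opposite side, then a point of one side against another side, using that the coefficient $s$ is nonnegative when substituting the first-step bound) --- is exactly the standard proof of this classical characterization as found in the cited reference, and your closing remarks correctly identify the only remaining bookkeeping (identifying comparison points with the stated affine combinations, and the degenerate cases where two points coincide or lie on a common side).
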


\begin{Proposition}
\label{barycenter_prop}
Let $Y$ be a $\cat$ space, and $\nu$ a probability measure on $Y$. 
Suppose that the integral 
$$
\int_Y d(p,q)^2 d\nu(p) 
$$
is finite for some {\rm (}hence any{\rm )} point $q\in Y$. 
Then there exists a unique point $p_0 \in Y$ which minimizes the function
$$
 q \mapsto \int_Y d(p,q)^2 d\nu(p),\quad q\in Y.
$$
\end{Proposition}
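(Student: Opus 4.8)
The plan is to work directly with the function $f(q):=\int_Y d(p,q)^2\,d\nu(p)$ and to exploit the convexity of $q\mapsto d(p,q)^2$ encoded in Proposition~\ref{CAT(0)_ineq}. First I would verify that $f$ is finite on all of $Y$: if $\int_Y d(p,q_1)^2\,d\nu(p)<\infty$ for one point $q_1$, then the triangle inequality together with $d(p,q)^2\le 2d(p,q_1)^2+2d(q_1,q)^2$ shows, after integration, that $f(q)<\infty$ for every $q\in Y$. A similar argument, using the elementary estimate $|d(p,q)^2-d(p,q')^2|\le d(q,q')\bigl(d(p,q)+d(p,q')\bigr)$ and the Cauchy--Schwarz inequality, shows that $f$ is continuous (in fact locally Lipschitz) on $Y$; in particular $m_0:=\inf_Y f$ is finite, being nonnegative.

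The heart of the proof is a single convexity estimate. Given $q_0,q_1\in Y$, let $c\colon[0,1]\longrightarrow Y$ be the geodesic from $q_0$ to $q_1$ and let $m=c(1/2)$ be its midpoint. Applying Proposition~\ref{CAT(0)_ineq} with $t=1/2$ to the point $p$ and the geodesic $c$, and then integrating over $Y$ with respect to $\nu$, yields
\begin{equation*}
f(m)\ \le\ \tfrac12 f(q_0)+\tfrac12 f(q_1)-\tfrac14\,d(q_0,q_1)^2 .
\end{equation*}

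From this, uniqueness and existence follow quickly. If $p_0,p_1$ both minimize $f$, then taking $q_0=p_0$, $q_1=p_1$ above gives $m_0\le f(m)\le m_0-\tfrac14 d(p_0,p_1)^2$, so $d(p_0,p_1)=0$. For existence, choose a minimizing sequence $q_n$ with $f(q_n)\to m_0$; applying the estimate to $q_0=q_n$, $q_1=q_k$ and using $f\ge m_0$ gives $\tfrac14 d(q_n,q_k)^2\le \tfrac12 f(q_n)+\tfrac12 f(q_k)-m_0\to 0$, so $(q_n)$ is Cauchy. By completeness it converges to some $p_0\in Y$, and continuity of $f$ gives $f(p_0)=m_0$, so $p_0$ is the required minimizer. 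The only point demanding care is the finiteness-and-continuity bookkeeping of the first paragraph; once that is in place, everything is driven mechanically by the one convexity inequality, and no further properties of $\cat$ spaces are needed.
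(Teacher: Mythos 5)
Your proof is correct: the finiteness/continuity bookkeeping, the midpoint inequality obtained by integrating Proposition~\ref{CAT(0)_ineq} at $t=1/2$, and the resulting uniqueness and Cauchy minimizing-sequence argument (using the standing completeness assumption) are all in order. The paper does not prove this proposition itself but refers to Korevaar--Schoen, Lemma 2.5.1, whose argument is exactly this standard uniform-convexity proof, so you have in effect reproduced the cited proof in a self-contained way.
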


For a proof, see \cite[p. 639, Lemma 2.5.1] {Korevaar-Schoen}. 
We call the point $p_0$ the {\it barycenter of} 
$\nu$ and denote it by $\overline{\nu}$ or ${\rm bar}(\nu)$. 
Mostly, we will consider a measure $\nu$ with finite support; 
$\nu$ is given as a convex combination 
$\nu = \sum_{i=1}^m t_i \Dirac_{p_i}$ of Dirac measures 
$\Dirac_{p_i}$'s, where $\sum_{i=1}^m t_i=1$ and $t_i\geq 0$ for
$i=1,\dots, m$.  
In such a case, we often say $\overline{\nu}$ is the {\it barycenter of} 
$\{p_1, \dots, p_m\}$ {\it with weight} $\{t_1, \dots, t_m\}$.  

\begin{Definition}
\label{tangent cone}
Let $Y$ be a $\cat$ space. \\ 
{\rm (1)}\quad Let $c$ and $c'$ be two nontrivial 
geodesics in $Y$ starting 
from $p \in Y$. The {\it angle} $\angle_p(c,c')$ between $c$ and $c'$ 
is defined by 
\begin{equation*}
 \angle_p(c,c')= \lim_{t,t' \rightarrow 0}
 \angle_{\overline{p}}(\overline{c(t)},\overline{c'(t')}),
\end{equation*}
 where
 $\angle_{\overline{p}}(\overline{c(t)},\overline{c'(t')})$ 
 denotes the angle between the sides
 $\overline{p}\overline{c(t)}$ and $\overline{p}\overline{c'(t)}$
 of the comparison triangle
 $\Delta(\overline{p},\overline{c(t)},\overline{c'(t')})
 \subset \R^2$.\\ 
{\rm (2)}\quad Let $p \in Y$.
 We define an equivalence relation  $\sim$ on the set of
 nontrivial geodesics starting from $p$ by 
 $c \sim c' \Longleftrightarrow \angle_p(c,c')=0$. 
 Then the angle $\angle_p$ induces a distance on the quotient 
 $(S_pY)^{\circ} = \{\text{nontrivial geodesics starting from } 
 p \}/\sim$, 
 which we denote by the same symbol $\angle_p$.  The completion 
 $(S_pY,\angle_p)$ of the metric space $((S_pY)^{\circ}, \angle_p)$ is
 called the {\it space of directions} 
 at $p$.\\ 
{\rm (3)}\quad Let $TC_pY$ be the cone over $S_pY$, namely,
\begin{equation*}
 TC_pY = (S_pY \times \R_+) / (S_pY \times \{0\}). 
\end{equation*}
 Let $v,v' \in TC_pY$. We may write $v=(u,t)$ and $v'=(u',t')$, where
 $u,u' \in S_pY$ and $t,t' \in \R_{+}$.   Then
\begin{equation*}
 d_{TC_pY}(v, v')= t^2 + {t'}^2 - 2tt'\cos \angle_p(u,u')
\end{equation*}
 defines a distance on $TC_pY$. The metric space $(TC_pY, d_{TC_pY})$ is
 again a $\cat$ space and is called the {\it tangent cone} of $Y$ at $p$.  We
 define an `inner product'  on  $TC_pY$ by
\begin{equation*}
 \langle v, v' \rangle = tt'\cos \angle_p(u,u').
\end{equation*}
 We often denote the length $t$ of $v$ by $|v|$; thus we have
 $|v|=\sqrt{\langle v,v \rangle}=d_{TC_pY}(0_p,v)$, where $0_p$ denotes
 the cone point, which is the equivalence class of 
 $(u,0) \in S_pY \times \R_{+}$ in $TC_pY$.   

{\rm (4)}\quad Define a map $\pi_p \colon Y \longrightarrow TC_pY$ by
 $\pi_p(q)=([c], d_Y(p,q))$, where $c$ is the geodesic
 joining $p$ to $q$ and $[c]\in S_pY$ is the equivalence class of
 $c$. Then $\pi_p$ is distance-nonincreasing.  
\end{Definition}

A complete, simply connected Riemannian manifold $Y$ with nonpositive
sectional curvature, often called a {\it Hadamard manifold}, is a
typical example of $\cat$ space. 
For such a $Y$, $S_pY$ (resp. $TC_pY$) is the unit tangent sphere
(resp. the tangent space) at $p$. 
The map $\pi_p$ is the inverse of the exponential map.
Hilbert spaces, metric trees and Euclidean buildings supply other examples 
of $\cat$ spaces (see \S \ref{section_delta} for Euclidean buildings).

\section{The $n$-step energy of equivariant maps}

Let $\Gamma$ be a finitely generated group, and $Y$ a $\cat$ space. 
Suppose a homomorphism $\rho\colon \Gamma \longrightarrow \isom(Y)$ is given. 
In \cite{Gromov3}, Gromov formulated a sufficient condition for $\rho(\Gamma)$ to have a
global fixed point (i.e., there exists a point $p \in Y$ such that $\rho(\Gamma)p =p$) in terms of 
the growth of $n$-step energy of $\rho$-equivariant maps. 
The purpose of this section is to give a detailed proof of Gromov's result. 

We consider a random walk on $\Gamma$ given by transition probability measures 
$\{ \mu(\gamma, \cdot) \}_{\gamma\in\Gamma}$ on $\Gamma$ which is  $\Gamma$-invariant,
finitely supported, symmetric, and irreducible. 
In other words, we are given a nonnegative function $\mu$ on $\Gamma \times \Gamma$ satisfying 
\begin{enumerate}
\renewcommand{\labelenumi}{(\alph{enumi})} 
 \item  $\mu(\gamma \gamma', \gamma \gamma'')=\mu(\gamma',\gamma'')$
 for any $\gamma$, $\gamma'$, and $\gamma'' \in \Gamma$,
 \item  for any $\gamma \in \Gamma$, $\mu(\gamma, \gamma')= 0$ for
 all but finitely many $\gamma' \in \Gamma$, 
 \item  for any $\gamma \in \Gamma$, 
        $\displaystyle{\sum_{\gamma' \in \Gamma} \mu(\gamma, \gamma') = 1}$, 
 \item  $\mu(\gamma, \gamma')=\mu(\gamma',\gamma)$ for any 
        $\gamma, \gamma' \in \Gamma$, 

 \item  for any $\gamma$, $\gamma'\in \Gamma$, there exist 
        $\gamma_0,\gamma_1, \dots, \gamma_n \in \Gamma$ such that
        $\gamma=\gamma_0$, $\gamma'=\gamma_n$, and
        $\mu(\gamma_i,\gamma_{i+1})\not=0$, $i=0,\dots, n-1$. 
\end{enumerate} 
The last condition is called the {\it irreducibility} of a random walk
and means that $\Gamma$ is  `connected' with respect
to $\mu$, that is,  for any pair of points in $\Gamma$, one can move
from one to the other with positive probability. 
Though we could begin with a discrete countable group $\Gamma$, the existence of such a $\mu$ 
would force $\Gamma$ to be finitely generated. 

We say a map $f\colon \Gamma \longrightarrow Y$ is {\it $\rho$-equivariant} 
if $f$ satisfies $f(\gamma \gamma')= \rho(\gamma)f(\gamma')$ for all $\gamma,\ \gamma' \in \Gamma$. 
(Here we regard $\Gamma$ itself as a space with left $\Gamma$-action.) 
We define the {\it energy} $E_{\mu,\rho}(f)$ of a $\rho$-equivariant map $f$ by 
\begin{equation}\label{energy_def}
E_{\mu,\rho}(f)= \frac{1}{2} \sum_{\gamma' \in \Gamma} 
\mu(\gamma,\gamma')\, d_Y(f(\gamma),f(\gamma'))^2, 
\end{equation} 
where $\gamma$ is an arbitrarily chosen element of $\Gamma$. 
Note that since $f$ is $\rho$-equivariant and $\mu$ is $\Gamma$-invariant, 
the right-hand side of \eqref{energy_def} does not depend on the particular choice of $\gamma$. 
It  is often convenient to choose $\gamma= e$, the identity element of $\Gamma$. 
A $\rho$-equivariant map $f$ is said to be {\it harmonic} if $f$
minimizes $E_{\mu,\rho}$ among all $\rho$-equivariant maps. 
Note that the image of a $\rho$-equivariant map 
$f\colon \Gamma \longrightarrow Y$ is the $\rho(\Gamma)$-orbit of the point
$f(e)$, and $f$ is determined by the choice of $f(e)\in Y$. Therefore, the
set of all $\rho$-equivariant maps from $\Gamma$ to $Y$, denoted by $\mathcal{M}_\rho$, 
can be identified with $Y$.  Then the energy functional $E_{\mu,\rho}$ becomes a
convex continuous function on $\mathcal{M}_\rho\cong Y$. 

We define the {\it link} $L_{\gamma}$ of $\gamma \in \Gamma$ with respect to $\mu$ by
$L_{\gamma}=\{ \gamma' \in \Gamma \mid \mu(\gamma,\gamma')>0\}$, and for a $\rho$-equivariant map $f$, 
define a map $F_{\gamma}\colon L_{\gamma} \longrightarrow TC_{f(\gamma)}Y$ by 
$F_{\gamma}(\gamma')=\pi_{f(\gamma)} (f(\gamma'))$, 
where $TC_pY$ is the tangent cone of $Y$ at $p$ and $\pi_{p}\colon Y \longrightarrow TC_{p}Y$ 
is the natural projection. 
Denote by $-\Delta_\mu f(e) \in TC_{f(e)}Y$ the barycenter of the push-forward measure $(F_e)_* (\mu(e,\cdot))$. 
Then a $\rho$-equivariant map $f$ is harmonic if and only if $-\Delta_\mu f(e)=0_{f(e)}$. 
Note that $2(- \Delta_\mu f(e))$ should be interpreted as the negative of the gradient of 
$E_{\mu,\rho}$ at $f$.
(Indeed, they coincide when $Y$ is a Riemannian manifold. 
See \cite{Izeki-Kondo-Nayatani}, \cite{Izeki-Nayatani}.) 

The following proposition gives a sufficient 
condition for the existence of a fixed-point of $\rho(\Gamma)$ in terms of
the energy functional. 

\begin{Proposition}[\cite{Izeki-Kondo-Nayatani}, \cite{Izeki-Nayatani}]
\label{gradient}
 Let $\Gamma$ be a finitely generated group equipped with a
 $\Gamma$-invariant, finitely supported, symmetric, and irreducible 
 random walk $\mu$. Let $Y$ 
 be a $\cat$ space and $\rho\colon \Gamma \longrightarrow \isom(Y)$ a
 homomorphism. Suppose there is a positive constant $C$ such that
 $|-\Delta_\mu f(e) |^2 \geq C E_{\mu,\rho}(f)$ holds for every
 $\rho$-equivariant map $f$. Then $\rho(\Gamma)$ admits a global fixed
 point. 
\end{Proposition}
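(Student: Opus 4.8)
The idea is to exploit the convexity of the energy functional $E_{\mu,\rho}$ on $\mathcal{M}_\rho \cong Y$ together with the hypothesis, which says (after the identification $2(-\Delta_\mu f(e))$ = negative gradient of $E_{\mu,\rho}$ at $f$) that the squared norm of the gradient dominates the value of the functional up to the constant $C$. Concretely, along the geodesic in $\mathcal{M}_\rho$ issuing from $f$ in the direction $-\Delta_\mu f(e)$, one should obtain a differential-inequality bound of the form $E(f_t) \le E(f)\bigl(1 - ct\bigr) + O(t^2)$, and more usefully a quantitative decrease: starting from any $f$ there is an $f'$ with $E_{\mu,\rho}(f') \le (1-c)\,E_{\mu,\rho}(f)$ for a fixed $c \in (0,1)$ depending only on $C$ (and a uniform bound on $\mu$, which is finitely supported). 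Iterating, one produces a sequence $f_n$ with $E_{\mu,\rho}(f_n) \to 0$, hence $d_Y(f_n(e), f_{n+1}(e)) \to 0$ fast enough that $(f_n(e))$ is Cauchy; its limit $p \in Y$ satisfies $E_{\mu,\rho} = 0$ there, which by irreducibility of $\mu$ forces $f_\infty$ to be constant, i.e. $\rho(\Gamma)p = p$.

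**Key steps, in order.** First I would make precise the first-variation computation: for the geodesic $t \mapsto f_t$ in $Y$ with $f_0(e) = f(e)$ and initial velocity $-\Delta_\mu f(e) \in TC_{f(e)}Y$, use Proposition~\ref{CAT(0)_ineq} applied to each pair $(f(e), f(\gamma'))$ to get an upper bound
\begin{equation*}
E_{\mu,\rho}(f_t) \le E_{\mu,\rho}(f) - 2t\,\langle -\Delta_\mu f(e),\ -\Delta_\mu f(e)\rangle + t^2 |-\Delta_\mu f(e)|^2 + (\text{error}),
\end{equation*}
where the inner product and the error term come from expanding $d_Y(f_t(e), f(\gamma'))^2$ via the comparison inequality and the definition of $\pi_{f(e)}$; the cross term is $-2t\,|-\Delta_\mu f(e)|^2$ by the barycenter characterization (the barycenter of $(F_e)_*\mu(e,\cdot)$ realizes the relevant inner product), so choosing $t$ proportional to $1$ and invoking the hypothesis $|-\Delta_\mu f(e)|^2 \ge C E_{\mu,\rho}(f)$ yields $E_{\mu,\rho}(f_t) \le (1 - c)E_{\mu,\rho}(f)$. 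Second, iterate to get $E_{\mu,\rho}(f_n) \le (1-c)^n E_{\mu,\rho}(f_0) \to 0$. Third, control the displacement: since $d_Y(f_n(e), f_{n+1}(e))^2$ is comparable to $|-\Delta_\mu f_n(e)|^2 \le \tfrac{1}{C'}E_{\mu,\rho}(f_n)$ (here one needs a reverse bound relating the barycenter length to the energy, available because $\mu$ is finitely supported with positive minimal mass), the series $\sum_n d_Y(f_n(e), f_{n+1}(e))$ converges geometrically, so $(f_n(e))$ is Cauchy and converges to some $p \in Y$ by completeness. Fourth, by continuity $E_{\mu,\rho}$ vanishes at the corresponding equivariant map $f_\infty$; since $E_{\mu,\rho}(f_\infty) = \tfrac12 \sum_{\gamma'} \mu(e,\gamma') d_Y(p, \rho(\gamma')p)^2 = 0$ and $\mu$ is irreducible, $\rho(\gamma)p = p$ for all $\gamma$ in a generating set, hence for all $\gamma \in \Gamma$.

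**Main obstacle.** The delicate point is the first-variation estimate in the $\cat$ setting: in a Riemannian manifold this is the elementary fact that moving along the negative gradient decreases the functional, but here $-\Delta_\mu f(e)$ lives in the tangent cone $TC_{f(e)}Y$, not in $Y$, and one must make sense of ``moving in that direction'' (there is no exponential map). The clean way is to avoid tangent-cone geodesics altogether and instead choose the competitor $f'$ to be the equivariant map with $f'(e)$ the point on a well-chosen geodesic in $Y$ — for instance the barycenter of a suitable finite weighted set involving $f(e)$ and the $f(\gamma')$ — and then estimate $E_{\mu,\rho}(f')$ directly using Proposition~\ref{CAT(0)_ineq}; the projection $\pi_{f(e)}$ being distance-nonincreasing is what lets one transfer the gain computed in $TC_{f(e)}Y$ back to an honest inequality in $Y$. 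A secondary technical nuisance is establishing the two-sided comparison between $|-\Delta_\mu f(e)|$ and the step size $d_Y(f_n(e), f_{n+1}(e))$ uniformly in $n$, which is where finiteness of the support and symmetry of $\mu$ enter; this is routine but must be stated carefully to close the Cauchy argument.
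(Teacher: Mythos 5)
Your overall scheme (produce a competitor with a definite fractional energy drop, iterate, show the centers form a Cauchy sequence, and use irreducibility at the zero-energy limit) parallels the endgame of the paper's argument, but the paper obtains the convergence from the Jost--Mayer gradient flow of the convex functional $E_{\mu,\rho}$ on $\mathcal{M}_\rho\cong Y$: under the hypothesis the energy decays exponentially along the flow, the speed is integrable, so the flow has finite length and its limit has zero energy. The hard analytic content (that $2(-\Delta_\mu f(e))$ really controls the descent of $E_{\mu,\rho}$) is housed in the cited references, not re-proved here. Your proposal replaces this by an explicit one-step descent, and that step is where there is a genuine gap.

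Concretely, two things go wrong in your first-variation sketch. First, you estimate $E_{\mu,\rho}(f_t)$ by applying Proposition~\ref{CAT(0)_ineq} to the pairs $(f(e),f(\gamma'))$, i.e.\ you treat the neighbor values as fixed while $f(e)$ moves; but for an equivariant map $f(\gamma')=\rho(\gamma')f(e)$ moves simultaneously, so the terms to control are $d(f_t(e),\rho(\gamma')f_t(e))^2$. In the Hilbert case the ``neighbors also move'' contribution is handled by linearity of $\rho_0$ and symmetry of $\mu$ and doubles the gain to $-2t|-\Delta_\mu f(e)|^2$; in a general $\cat$ space this is exactly the delicate transport step (the analogue of Lemma~\ref{barycenter_lem} in the proof of \eqref{2-step_ineq}), and the available inequalities (Lemma~\ref{inner_product}) run in the direction useful for \emph{lower} bounds on convolved energies, not for the \emph{upper} bound on the energy of the displaced map that you need; a crude triangle-inequality treatment produces a positive first-order term of size $t\,d(p,b)\sqrt{E}$ that swamps any gain. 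Second, and more fundamentally, your cross term $-2t|-\Delta_\mu f(e)|^2$ presumes that the tangent-cone barycenter is a direction along which one can actually move in $Y$ with that rate of decrease over a definite distance. This fails in branching $\cat$ spaces: take a tripod with branch point $o$, put $p$ on one leg at distance $1$ from $o$ and the two neighbor values at distance $R$ out on the other two legs with weight $\tfrac12$ each. Then $(F_e)_*\mu(e,\cdot)$ is concentrated on a single direction and $|-\Delta_\mu f(e)|=1+R$, while the barycenter in $Y$ of the neighbor values is $o$, at distance only $1$ from $p$; no motion of $p$ can decrease the (neighbors-fixed) quadratic functional by anything like $c\,|-\Delta_\mu f(e)|^2$ when $R$ is large. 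So the barycenter characterization \eqref{inner_product_equality} plus the $1$-Lipschitz projection $\pi_p$ do not yield $E_{\mu,\rho}(f')\leq(1-c)E_{\mu,\rho}(f)$ for your proposed competitor; any correct discrete scheme (e.g.\ the resolvent $f'={\rm argmin}\bigl(E_{\mu,\rho}(\cdot)+\tfrac{1}{2\lambda}d(f(e),\cdot)^2\bigr)$) needs precisely the slope estimate linking $|-\Delta_\mu f(e)|$ to the descending slope of $E_{\mu,\rho}$, which is the nontrivial input your sketch leaves unproved and which the paper delegates to \cite{Jost}, \cite{Mayer}, \cite{Izeki-Nayatani}, \cite{Izeki-Kondo-Nayatani}. (Your Cauchy step, by contrast, is fine: one only needs the easy upper bound $d(f_n(e),f_{n+1}(e))\lesssim\sqrt{E_{\mu,\rho}(f_n)}$, not a two-sided comparison.)
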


In fact, under the assumption, $|-\Delta_\mu f_t (e)|$
decreases to $0$  
rapidly along the Jost-Mayer's gradient flow $f_t$
of $E_{\mu,\rho}$, and is integrable on $[0,\infty)$. 
(See \cite{Jost} and \cite{Mayer} for the Jost-Mayer's gradient flow.)
This means that the length of the flow starting from $f_0=f$ is
finite up to time infinity.  In particular, by taking a divergent sequence  
$\{t_i\}_{i\in \N}\subset \R$, we obtain a Cauchy sequence 
$\{f_{t_i}\}_{i \in \N} \subset \mathcal{M}_{\rho}$ and 
a $\rho$-equivariant map $f_{\infty}$ as its limit. 
Since $|-\Delta_\mu f_t (e)|\to 0$, under the assumption again, 
we see that $f_{\infty}$ satisfies $E_{\mu,\rho}(f_{\infty})=0$,
which implies that $d(f(\gamma), f(\gamma'))=0$ whenever 
$\mu(\gamma,\gamma')\not= 0$. Since $\mu$ is irreducible, any pair of
elements in $\Gamma$ can be connected by a path consisting of segments
of the form $(\gamma, \gamma')$ such that $\mu(\gamma, \gamma')\not=0$. 
Therefore, 
$f_{\infty}$ must be a constant 
map. (Actually, the irreducibility of 
$\mu$ is necessary only at this point.)
Since $f_\infty(\Gamma)$ is a $\rho(\Gamma)$-orbit consisting 
of a single point, it is fixed by $\rho(\Gamma)$.  

For $\mu$ as above, denote by $\mu^n$ the $n$th convolution of $\mu$: 
\begin{equation*}
 \mu^n(\gamma, \gamma')= \sum_{\gamma_1 \in \Gamma} 
 \dots \sum_{\gamma_{n-1} \in \Gamma} \mu(\gamma, \gamma_1) \dots
 \mu(\gamma_{n-1},\gamma'). 
\end{equation*}
We define the {\it $n$-step energy} $E_{\mu^n,\rho}(f)$ of a $\rho$-equivariant
map $f$ by
\begin{equation*}
 E_{\mu^n,\rho}(f)= \frac{1}{2} \sum_{\gamma \in \Gamma}\mu^n(e,\gamma )
 d_Y(f(e),f(\gamma))^2.
\end{equation*}

\subsection{Examples of $n$-step energy}\label{example}

We first take a glance at examples of the computation of
$E_{\mu^n,\rho}(f)$. 
In what follows, we drop $\rho$ in $E_{\mu^n,\rho}$ and use the symbol 
$E_{\mu^n}$, unless no confusion is likely to occur. 

\begin{Example}
Let $\Gamma=\Z$ and $\mu$ the standard random walk on $\Z$:
\begin{equation*} 
\mu(k,l)= \begin{cases} 
\frac{1}{2} & \text{if }k-l =\pm1, \\ 0 & \text{otherwise.}
\end{cases}
\end{equation*}
Let $Y=\R$ and $\rho\colon \Z \longrightarrow \isom(\R)$ a homomorphism 
such that $\rho(1) (t) = u t + \tau$ for $t\in \R$, where $u = \pm 1$ and $\tau\in \R$. 
Let $f\colon \Z\longrightarrow \R$ be a $\rho$-equivariant map such that $f(0) = \alpha\in \R$. 
Then for $k\in \Z$, 
$$
\rho(k) (t) = \left\{ \begin{array}{cl}
t + k \tau & \mbox{if $u=1$}, \\
t & \mbox{if $u=-1$ and $k$ is even}, \\ 
-t+ \tau & \mbox{if $u=-1$ and $k$ is odd}, \end{array} \right. 
$$
and 
$$
f(k) = \left\{ \begin{array}{cl}
\alpha + k \tau & \mbox{if $u=1$}, \\
\alpha & \mbox{if $u=-1$ and $k$ is even}, \\ 
-\alpha+ \tau & \mbox{if $u=-1$ and $k$ is odd}. \end{array} \right. 
$$
Note that $\rho(\Z)$ has a global fixed point in $\R$ exactly when $u=1$ and $\tau=0$, 
or $u=-1$, and $f$ is harmonic exactly when $u = 1$, or $u = -1$ and $\alpha = \tau/2$. 

We now compute the $n$-step energy of $f$. 
Suppose that, among $n$ steps, a walker makes exactly $j$ steps to the right ($+1$). 
Then the walker should make $n-j$ steps to the left ($-1$), and he arrives at $2j-n \in \Z$. 
There are $\,_{n}C_{j}$ ways of such walks, each taking place with probability $(1/2)^n$. 
Therefore, 
\begin{eqnarray*} 
E_{\mu^n}(f) &=& \frac{1}{2} \sum_{k\in \Z} \mu^n(0, k)\, |f(k)-f(0)|^2 \\ 
&=& \frac{1}{2} \sum_{j=0}^n \frac{\,_{n}C_{j}}{2^n} |f(2j-n)-f(0)|^2 \\ 
&=& \left\{\begin{array}{cl} 
\frac{1}{2} \sum_{j=0}^n \frac{\,_{n}C_{j}}{2^n} (2j-n)^2 \tau^2 \,\, =\,\, \frac{n\tau^2}{2} & \mbox{if $u=1$}, \\ 
0 &\mbox{if $u=-1$ and $n$ is even}, \\ 
2 \left(\alpha - \frac{\tau}{2}\right)^2 &\mbox{if $u=-1$ and $n$ is odd}. 
\end{array}\right. 
\end{eqnarray*}
We conclude that $E_{\mu^n}(f) = n E_\mu(f)$ for all $n$ if $u=1$, 
and $E_{\mu^n}(f) \leq E_\mu(f)$ for all $n$ if $u=-1$. 
In the computation above for the $u=1$ case,  we have used the fact that
\begin{equation*}
\begin{split}
& \sum_{j=0}^n \frac{\,_{n}C_{j}}{2^n}j = (\text{the average of }B(n,1/2))= \frac{n}{2}, \\
& \sum_{j=0}^n \frac{\,_{n}C_{j}}{2^n}\left(j-\frac{n}{2}\right)^2 = (\text{the variance of }B(n,1/2))= \frac{n}{4}, 
\end{split}
\end{equation*}
where $B(n,1/2)$ denotes the symmetric binomial distribution. 
\end{Example}

As we will see in \S \ref{affine_case} (Corollary \ref{n-step_ineq2_affine}), 
when the target space $Y$ is a Hilbert space, 
$E_{\mu^n}(f) \leq nE_{\mu}(f)$ holds for any $\rho$-equivariant map $f$, 
and the equality holds if and only if $f$ is harmonic. 

\begin{Example} We take $\Gamma = F_m$ to be the free  group of rank $m$ generated by $s_1, \dots, s_m$. 
Let $S = \left\{ s_1^\pm,\dots,s_m^\pm \right\}$, and $\mu$ the standard random walk 
on $F_m$ with respect to the generator set $S$:
 \begin{equation*}
  \mu(\gamma,\gamma')=
  \left\{\begin{array}{cl} 
   \frac{1}{2m} & \text{if }\gamma' = \gamma s \text{ for some }s \in S, \\
   0 & \text{otherwise}.
\end{array}\right. 
 \end{equation*}
Clearly, $\mu$ is $F_m$-invariant, finitely supported, symmetric, and irreducible.  
 Let $Y$ be the Cayley graph of $F_m$ with respect to $S$.  Then $Y$ is
 a $2m$-regular tree.  We give a distance on $Y$ by setting the length
 of each edge to be $1$.  Let $\rho\colon F_m \longrightarrow \isom(Y)$ be the
 homomorphism that gives the action on $Y$ coming from the left action
 of $F_m$ on $F_m$ itself, and $f\colon F_m\longrightarrow Y$ the standard
 embedding of $F_m$ into its Cayley graph $Y$.  We give an estimate of
 $E_{\mu^n}(f)$. 
 We denote by $\mu^n(r)$ the probability of a walker on $F_m$ being at 
 distance $r$ from the starting point $e$ after taking $n$ steps following $\mu$. 
Thus 
 \begin{equation*}
   E_{\mu^n}(f) = \frac{1}{2} \sum_{r=0}^n \mu^n(r)r^2. 
 \end{equation*}
 Let $X_n$ be the Bernoulli walk on $\Z$ starting from
 $0$ which moves right with probability $p=(2m-1)/2m$ and left with
 probability $q=1/2m$.  
 Denote by $b^n(r)$ the probability that $X_n=r \in \Z$:
 \begin{equation*}
  b^n(r) =\,_n C_{(n+r)/2} \left(\frac{2m-1}{2m}\right)^{(n+r)/2}
  \left(\frac{1}{2m}\right)^{(n-r)/2}. 
 \end{equation*}
 Note that the average $\mathbb{E}(X_n)$ and the variance 
 $\mathbb{V}(X_n)$ are given by 
$$
\mathbb{E}(X_n) = n(p-q) = n(m-1)/m\quad \mbox{and}\quad \mathbb{V}(X_n) = 4npq = n(2m-1)/m^2, 
$$ 
respectively. 
Recall that $\mu^n(r) \leq b^n(r)/p = 2m\, b^n(r)/(2m-1)$ holds as explained in
 \cite{Silberman}.  Then, by the variance equality, 
\begin{eqnarray*} 
E_{\mu^n}(f) &\leq& \frac{1}{2} \sum_{r=0}^n \frac{2m}{2m-1} b^n(r)r^2\,\, =\,\, 
\frac{m}{2m-1}\mathbb{E}(X_n^2) \\ 
&=& \frac{m}{2m-1}\left(\mathbb{V}(X_n) + \mathbb{E}(X_n)^2\right)\,\, \leq \,\, 
\frac{m}{2m-1} n^2. 
\end{eqnarray*} 
\end{Example}

\subsection{General case}\label{general_case}

First we recall the well-known variance inequalities on a $\cat$ space. 

\begin{Lemma}\label{variance} 
Let $Y$ be a $\cat$ space with metric $d$. 
Let $\nu = \sum_{i=1}^m t_i \Dirac_{v_i}$ be a probability measure with finite support on $Y$ and $\overline{\nu}\in Y$ 
the barycenter of $\nu$. 
Then we have 
\begin{equation}\label{variance_iequality1}
\sum_{i=1}^m t_i d(v_i, w)^2 \geq \sum_{i=1}^m t_i d(v_i, \overline{\nu})^2 + d(\overline{\nu}, w)^2 
\end{equation}
for all $w\in Y$, and 
\begin{equation}\label{variance_iequality2} 
\frac{1}{2} \sum_{i=1}^m \sum_{j=1}^m t_i t_j d(v_i,v_j)^2 \geq \sum_{i=1}^m t_i d(v_i, \overline{\nu})^2. 
\end{equation}
\end{Lemma}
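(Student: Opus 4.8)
The plan is to derive both inequalities from the basic $\cat$ inequality of Proposition \ref{CAT(0)_ineq}, applied along geodesics emanating from the barycenter. For \eqref{variance_iequality1}, I would first recall the first-variation characterization of the barycenter: since $\overline{\nu}$ minimizes $q \mapsto \sum_i t_i d(v_i, q)^2$, moving $\overline{\nu}$ slightly toward any point $w$ cannot decrease this function. Concretely, let $c\colon [0,1]\longrightarrow Y$ be the geodesic from $\overline{\nu}$ to $w$, and apply Proposition \ref{CAT(0)_ineq} with $p = v_i$ and this geodesic $c$ to get
\begin{equation*}
d(v_i, c(s))^2 \leq (1-s) d(v_i, \overline{\nu})^2 + s\, d(v_i, w)^2 - s(1-s) d(\overline{\nu}, w)^2.
\end{equation*}
Multiplying by $t_i$, summing over $i$, and using that $\sum_i t_i d(v_i, c(s))^2 \geq \sum_i t_i d(v_i, \overline{\nu})^2$ by minimality of the barycenter, I would obtain
\begin{equation*}
0 \leq -s\sum_i t_i d(v_i, \overline{\nu})^2 + s \sum_i t_i d(v_i, w)^2 - s(1-s) d(\overline{\nu}, w)^2.
\end{equation*}
Dividing by $s > 0$ and letting $s \to 0$ yields exactly \eqref{variance_iequality1}.

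For \eqref{variance_iequality2}, the plan is to average \eqref{variance_iequality1} against the measure $\nu$ itself in the variable $w$. Taking $w = v_j$ in \eqref{variance_iequality1} gives $\sum_i t_i d(v_i, v_j)^2 \geq \sum_i t_i d(v_i, \overline{\nu})^2 + d(\overline{\nu}, v_j)^2$ for each $j$; then multiplying by $t_j$ and summing over $j$ gives
\begin{equation*}
\sum_{i,j} t_i t_j d(v_i, v_j)^2 \geq \sum_i t_i d(v_i, \overline{\nu})^2 + \sum_j t_j d(\overline{\nu}, v_j)^2 = 2 \sum_i t_i d(v_i, \overline{\nu})^2,
\end{equation*}
and dividing by $2$ gives \eqref{variance_iequality2}. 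Note this step is purely formal once \eqref{variance_iequality1} is in hand, so the real content is entirely in the first inequality.

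I do not expect a serious obstacle here, as both statements are standard (they appear, e.g., in Korevaar–Schoen and Sturm); the only point requiring a little care is the limiting argument $s \to 0$ in the derivation of \eqref{variance_iequality1}, where one must be sure the $\cat$ inequality is being applied with the correct geodesic parametrization and that the minimality inequality $\sum_i t_i d(v_i, c(s))^2 \geq \sum_i t_i d(v_i, \overline{\nu})^2$ is invoked at the right moment. An alternative to the limiting argument would be to keep $s = 1$ and combine the $\cat$ inequality with minimality directly, but that produces a weaker bound, so the infinitesimal version is the natural route. Since finitely supported measures are all that is needed in the sequel, there is no need to address convergence issues for general probability measures, which keeps the argument elementary.
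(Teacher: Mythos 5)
Your proposal is correct and follows essentially the same route as the paper: the paper also applies the $\cat$ inequality of Proposition \ref{CAT(0)_ineq} along the geodesic from $\overline{\nu}$ to $w$ (phrased there as convexity of $\tau \mapsto F(c(\tau)) - (l\tau)^2$), combines it with minimality of the barycenter, divides by the parameter and lets it tend to $0$ to get \eqref{variance_iequality1}, and then obtains \eqref{variance_iequality2} by integrating \eqref{variance_iequality1} against $d\nu(w)$. No gaps; your limiting argument at $s \to 0$ is exactly the step the paper performs.
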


\begin{proof} 
We include a proof for the sake of completeness. 
Set $l = d(\overline{\nu}, w)$ and $F(w) = \sum_{i=1}^m t_i d(v_i, w)^2$. 
Let $c \colon [0,1] \longrightarrow Y$ be the (constant-speed) geodesic joining $\overline{\nu}$
and $w$; $c(0) = \overline{\nu}$, $c(1) = w$. 
By Proposition \ref{CAT(0)_ineq}, $d(v_i, c(\tau))^2 - (l\tau)^2$ is a convex function of $\tau$ , and hence 
the same is true of the function $\varphi(\tau) = F(c(\tau)) - (l\tau)^2$. 
Therefore, 
\begin{eqnarray*}
F(\overline{\nu}) - (l\tau)^2 &\leq& \varphi(\tau)\\
&\leq& (1-\tau) \varphi(0) + \tau \varphi(1)\\
&=& (1-\tau) F(\overline{\nu}) + \tau (F(w) - l^2),
\end{eqnarray*}
and so $\tau F(\overline{\nu}) + l^2 \tau(1-\tau) \leq \tau F(w)$.
Dividing the both sides by $\tau$ and letting $\tau\to 0$, we obtain \eqref{variance_iequality1}. 
\eqref{variance_iequality2} follows by integrating \eqref{variance_iequality1} against $d\nu(w)$. 
\end{proof}

We use this lemma to derive the following 

\begin{Lemma}\label{inner_product} 
Let $Y$ be a $\cat$ space and $p\in Y$. 
Let $\nu = \sum_{i=1}^m t_i \Dirac_{v_i}$ be a probability measure with finite support on $TC_pY$ 
and $\overline{\nu}\in TC_pY$ the barycenter of $\nu$. 
Then for any $w\in TC_pY$, we have 
\begin{equation}\label{inner_product_inequality}
\langle \overline{\nu}, w \rangle \geq \sum_{i=1}^m t_i \langle v_i, w \rangle. 
\end{equation}
The equality holds if $w= \overline{\nu}$. 
\end{Lemma}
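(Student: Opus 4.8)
The plan is to transplant the variance inequality \eqref{variance_iequality1} into the $\cat$ space $TC_pY$ itself and then exploit the radial scaling of the cone to absorb an unwanted error term. First I would record the two elementary facts that make the cone's ``inner product'' behave linearly enough. From the definitions of $d_{TC_pY}$ and of $\langle\cdot,\cdot\rangle$ (namely $|v|=t$ and $\langle v,v'\rangle = tt'\cos\angle_p(u,u')$ for $v=(u,t)$, $v'=(u',t')$), one has $d_{TC_pY}(v,v')^2 = |v|^2 + |v'|^2 - 2\langle v,v'\rangle$ for all $v,v'\in TC_pY$. Moreover, writing $\lambda w$ for the point of $TC_pY$ obtained from $w$ by multiplying its radial coordinate by $\lambda>0$ (which leaves its direction, hence every angle at $p$, unchanged), one has $|\lambda w| = \lambda|w|$ and $\langle v,\lambda w\rangle = \lambda\langle v,w\rangle$ for every $v$.

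Next I would apply \eqref{variance_iequality1} in $TC_pY$ to the measure $\nu$ and the test point $\lambda w$. Substituting the identity above into each of the three squared distances and cancelling the terms $\sum_i t_i|v_i|^2$ and $\lambda^2|w|^2$ that occur on both sides, the inequality collapses to
\[
\lambda\Bigl(\langle\overline{\nu},w\rangle - \sum_{i=1}^m t_i\langle v_i,w\rangle\Bigr) \ \geq\ |\overline{\nu}|^2 - \sum_{i=1}^m t_i\langle v_i,\overline{\nu}\rangle .
\]
The right-hand side is a constant independent of $\lambda$, so dividing by $\lambda$ and letting $\lambda\to\infty$ gives precisely \eqref{inner_product_inequality}.

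Finally, for the equality assertion I would feed $w=\overline{\nu}$ back into \eqref{inner_product_inequality}, obtaining $|\overline{\nu}|^2\geq\sum_i t_i\langle v_i,\overline{\nu}\rangle$, and then apply \eqref{variance_iequality1} a second time with the cone point $0_p$ in place of $w$, so that $d(v_i,0_p)=|v_i|$ and $d(\overline{\nu},0_p)=|\overline{\nu}|$; expanding $d(v_i,\overline{\nu})^2$ and simplifying yields the reverse inequality $\sum_i t_i\langle v_i,\overline{\nu}\rangle\geq|\overline{\nu}|^2=\langle\overline{\nu},\overline{\nu}\rangle$, and hence equality when $w=\overline{\nu}$.

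The one point that requires care — and the reason the statement is not completely immediate — is that a naive application of \eqref{variance_iequality1} directly at $w$ leaves behind the extra term $|\overline{\nu}|^2-\sum_i t_i\langle v_i,\overline{\nu}\rangle$, whose sign is not evident in advance; the homogeneity of $\langle\cdot,w\rangle$ in the variable $w$ is exactly what allows one to scale this term away. Everything else is routine algebra with the cone metric.
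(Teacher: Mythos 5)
Your proof is correct, and it uses the same key ingredient as the paper (Lemma \ref{variance} applied inside $TC_pY$, together with the law-of-cosines identity $d(v,v')^2=|v|^2+|v'|^2-2\langle v,v'\rangle$), but it is organized along a genuinely different route. The paper first establishes the exact identity $|\overline{\nu}|^2=\sum_i t_i\langle v_i,\overline{\nu}\rangle$ (equation \eqref{inner_product_equality}) by a first-variation argument: it minimizes the quadratic $\psi(\tau)=\sum_i t_i d(v_i,\tau\overline{\nu})^2$ along the ray through $\overline{\nu}$ and matches the minimizer with $\tau=1$; this identity is then substituted into \eqref{variance_iequality1} applied at $w$ itself, so that the troublesome term cancels exactly. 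You instead apply \eqref{variance_iequality1} at the rescaled point $\lambda w$ and use the radial homogeneity of $|\cdot|$ and $\langle\cdot,\cdot\rangle$ to isolate the error term $B=|\overline{\nu}|^2-\sum_i t_i\langle v_i,\overline{\nu}\rangle$ as a $\lambda$-independent constant, which is then eliminated in the limit $\lambda\to\infty$ regardless of its sign (legitimate, since $B$ is finite by finite support); the equality case is recovered afterwards by a squeeze, combining \eqref{inner_product_inequality} at $w=\overline{\nu}$ with a second application of \eqref{variance_iequality1} at the cone point $0_p$, which gives $\sum_i t_i\langle v_i,\overline{\nu}\rangle\geq|\overline{\nu}|^2$. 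What each approach buys: the paper's order of argument produces the barycenter identity \eqref{inner_product_equality} up front, which is exactly the "equality" assertion and is reused verbatim in the cancellation; your scaling trick makes the main inequality self-contained (no preliminary identity needed, only the cone's homogeneity), at the cost of deriving the equality statement as a two-sided estimate rather than an exact computation. Both arguments rest only on Lemma \ref{variance} and the definition of the cone metric, so your proposal is a valid alternative proof.
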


\begin{proof} 
First we treat the $w= \overline{\nu}$ case  (by just revising the proof of Lemma 2.7 in \cite{Izeki-Nayatani}).  
Set $\psi(\tau) = \sum_{i=1}^m t_i d(v_i, \tau \overline{\nu})^2$, which takes its minimum at $\tau = 1$. 
On the other hand, we can rewrite 
$$
\psi(\tau) =  \sum_{i=1}^m t_i |v_i|^2 + \tau^2 |\overline{\nu}|^2 - 2\tau \sum_{i=1}^m t_i  \langle v_i, \overline{\nu} \rangle, 
$$
and the right-hand side takes its minimum at $\tau = \sum_{i=1}^m t_i  \langle v_i, \overline{\nu} \rangle/|\overline{\nu}|^2$ 
(if $\overline{\nu} \neq 0_p$, which we may assume). 
Therefore, 
\begin{equation}\label{inner_product_equality}
|\overline{\nu}|^2 = \sum_{i=1}^m t_i \langle v_i, \overline{\nu} \rangle. 
\end{equation}

Now for $w$ arbitrary, applying \eqref{variance_iequality1} to $TC_pY$ and rewriting in terms of the inner product 
on $TC_pY$ and using \eqref{inner_product_equality}, we obtain 
$$
\sum_{i=1}^m t_i |v_i|^2 + |w|^2 - 2 \sum_{i=1}^m t_i \langle v_i, w \rangle \geq 
\sum_{i=1}^m t_i |v_i|^2 + |w|^2 - 2 \langle \overline{\nu}, w \rangle. 
$$
Cancelling out the common expression on the both sides, we obtain \eqref{inner_product_inequality}. 
\end{proof}

We restate Lemma \ref{inner_product} in the form which we will use later. 

\begin{Lemma}\label{barycenter_lem}
Let $\Gamma$ be a finitely generated group equipped with a $\Gamma$-invariant, finitely supported 
and symmetric random walk $\mu$, and $Y$ a $\cat$ space. 
Suppose that a homomorphism $\rho\colon \Gamma \longrightarrow \isom(Y)$ is given, and 
let $f\colon \Gamma\longrightarrow Y$ be a $\rho$-equivariant map. 
Then for any $v \in TC_{f(e)}Y$, 
\begin{equation}\label{barycenter_ineq} 
 \langle -\Delta_\mu f(e), v \rangle \geq 
 \sum_{\gamma \in \Gamma} \mu(e,\gamma)\langle F_e(\gamma),  v \rangle 
\end{equation}
holds. 
\end{Lemma}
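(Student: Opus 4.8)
The plan is to observe that Lemma~\ref{barycenter_lem} is simply Lemma~\ref{inner_product} applied to the tangent cone $TC_{f(e)}Y$ and to the push-forward of the transition measure. First I would record that, because $\mu$ is finitely supported, the link $L_e = \{\gamma\in\Gamma \mid \mu(e,\gamma)>0\}$ is finite; hence the map $F_e\colon L_e\longrightarrow TC_{f(e)}Y$, $F_e(\gamma)=\pi_{f(e)}(f(\gamma))$, has finite domain, and the push-forward measure
$$
\nu := (F_e)_*\bigl(\mu(e,\cdot)\bigr) = \sum_{\gamma\in L_e}\mu(e,\gamma)\,\Dirac_{F_e(\gamma)}
$$
is a probability measure with finite support on the $\cat$ space $TC_{f(e)}Y$ (its weights sum to $\sum_{\gamma\in\Gamma}\mu(e,\gamma)=1$ by condition~(c)). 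By the definition introduced just before Proposition~\ref{gradient}, the barycenter of $\nu$ is exactly $\overline{\nu}=-\Delta_\mu f(e)$.

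Next I would apply Lemma~\ref{inner_product} with $p=f(e)$, with the measure $\nu$ above (grouping any coincident values $F_e(\gamma)$ and adding their weights, if necessary), and with $w=v$. Inequality~\eqref{inner_product_inequality} then reads
$$
\langle -\Delta_\mu f(e),\, v \rangle = \langle \overline{\nu},\, v \rangle \;\geq\; \sum_{\gamma\in L_e}\mu(e,\gamma)\,\langle F_e(\gamma),\, v \rangle .
$$
Finally, since $\mu(e,\gamma)=0$ for every $\gamma\in\Gamma\setminus L_e$, each such term contributes nothing (and the right-hand side makes sense irrespective of $F_e$ being defined off $L_e$), so it equals $\sum_{\gamma\in\Gamma}\mu(e,\gamma)\langle F_e(\gamma),v\rangle$, which is~\eqref{barycenter_ineq}.

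There is no substantive obstacle here: the entire mathematical content sits in Lemma~\ref{inner_product} (which in turn rests on the variance inequality~\eqref{variance_iequality1}). The only points deserving a word are the finiteness of $L_e$ and that $\nu$ has finite support and total mass one --- both immediate from the finite-support and normalization hypotheses on $\mu$ --- together with the harmless bookkeeping of replacing the sum over $L_e$ by the sum over all of $\Gamma$. Accordingly, the write-up of the proof will be short, essentially a translation of Lemma~\ref{inner_product} into the present notation.
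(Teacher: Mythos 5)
Your proposal is correct and is exactly the paper's intent: the paper gives no separate proof, simply introducing Lemma \ref{barycenter_lem} as a restatement of Lemma \ref{inner_product} applied to the finitely supported push-forward measure $(F_e)_*(\mu(e,\cdot))$ on $TC_{f(e)}Y$, whose barycenter is $-\Delta_\mu f(e)$ by definition. Your bookkeeping about the finiteness of $L_e$ and extending the sum over all of $\Gamma$ is harmless and matches the intended argument.
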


We now prove the following

\begin{Proposition}
 Let $\mu$, $\mu'$ be $\Gamma$-invariant, finitely supported 
 and symmetric random walks on $\Gamma$. Then, for any
 $\rho$-equivariant map $f$, 
 \begin{equation}\label{2-step_ineq}
  E_{\mu * \mu'}(f) \geq E_{\mu}(f) + E_{\mu'}(f)-
  \langle -\Delta_{\mu}f(e), -\Delta_{\mu'}f(e)\rangle
 \end{equation}
 holds, where $\mu* \mu'$ denotes the convolution of $\mu$ and $\mu'$. 
\end{Proposition}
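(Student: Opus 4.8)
The plan is to expand $E_{\mu*\mu'}(f)$ by inserting an intermediate point and using the variance inequality. Writing $(\mu*\mu')(e,\gamma) = \sum_{\eta\in\Gamma}\mu(e,\eta)\mu(\eta,\gamma) = \sum_\eta \mu(e,\eta)\mu'(e,\eta^{-1}\gamma)$ and using $\Gamma$-invariance of $\mu'$, I would first rewrite
\begin{equation*}
E_{\mu*\mu'}(f) = \frac{1}{2}\sum_{\eta\in\Gamma}\mu(e,\eta)\sum_{\gamma\in\Gamma}\mu'(\eta,\gamma)\,d_Y(f(e),f(\gamma))^2 .
\end{equation*}
For each fixed $\eta$, the inner sum is an average over $\gamma\in L_\eta$ (with weights $\mu'(\eta,\gamma)$) of $d_Y(f(e),f(\gamma))^2$; applying the variance inequality \eqref{variance_iequality1} at the point $w=f(e)$ with barycenter $\overline{\nu_\eta}$ of the measure $\nu_\eta = \sum_\gamma \mu'(\eta,\gamma)\Dirac_{f(\gamma)}$ — equivalently, projecting everything into $TC_{f(\eta)}Y$ and using the inner-product reformulation from Lemma \ref{inner_product} — gives a lower bound splitting the squared distance into a "local $\mu'$-energy at $f(\eta)$" term plus a "displacement of the $\mu'$-barycenter" term.

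Next I would handle the two resulting pieces. The local term, after summing against $\mu(e,\eta)$ and using $\rho$-equivariance together with $\Gamma$-invariance of $\mu$ (to translate $E_{\mu'}$ based at $f(\eta)$ back to $E_{\mu'}$ based at $f(e)$), contributes exactly $E_{\mu'}(f)$. For the displacement term, the barycenter of $\nu_\eta$ seen from $f(\eta)$ is $-\Delta_{\mu'}f(\eta)\in TC_{f(\eta)}Y$, and $\rho$-equivariance identifies $-\Delta_{\mu'}f(\eta)$ with the $\rho(\eta)$-image of $-\Delta_{\mu'}f(e)\in TC_{f(e)}Y$; in particular its length equals $|-\Delta_{\mu'}f(e)|$. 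The term $d_Y(f(e),f(\eta))^2$ combines with these across the $\eta$-average: expanding $d_Y(\overline{\nu_\eta},f(e))^2$ in $TC_{f(\eta)}Y$ as $|{-\Delta_{\mu'}f(\eta)}|^2 + d_Y(f(e),f(\eta))^2 - 2\langle -\Delta_{\mu'}f(\eta), \pi_{f(\eta)}(f(e))\rangle$, summing $\tfrac12\sum_\eta\mu(e,\eta)(\cdots)$ produces $E_\mu(f) + \tfrac12|{-\Delta_{\mu'}f(e)}|^2$ from the first two contributions, while the cross term becomes $-\sum_\eta\mu(e,\eta)\langle -\Delta_{\mu'}f(\eta), \pi_{f(\eta)}(f(e))\rangle$. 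Here I would use Lemma \ref{barycenter_lem} (applied with the roles of $e$ and $\eta$ swapped, i.e.\ using $\Gamma$-invariance to base at $\eta$) to bound this cross term, noting $\pi_{f(\eta)}(f(e)) = F_\eta(\gamma_0)$-type data; carefully, Lemma \ref{barycenter_lem} gives $\langle -\Delta_\mu f(e), v\rangle \ge \sum_\eta \mu(e,\eta)\langle F_e(\eta), v\rangle$, and taking $v = -\Delta_{\mu'}f(e)$ converts the sum over $\eta$ into the single inner product $\langle -\Delta_\mu f(e), -\Delta_{\mu'}f(e)\rangle$, up to the equivariant identification of $\langle -\Delta_{\mu'}f(\eta), F_\eta(e)\rangle$ with $\langle -\Delta_{\mu'}f(e), F_e(\eta)\rangle$ — this is where symmetry of $\mu$ and $\mu'$ and the isometry $\rho(\eta)$ enter.

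I expect the main obstacle to be bookkeeping the base-point changes of tangent cones: the quantities $-\Delta_{\mu'}f(\eta)$, $F_\eta(e)$, and $\pi_{f(\eta)}(f(e))$ live in $TC_{f(\eta)}Y$ for varying $\eta$, and one must repeatedly use that $\rho(\eta)$ induces an isometry $TC_{f(e)}Y \to TC_{f(\eta)}Y$ carrying $-\Delta_{\mu'}f(e)$ to $-\Delta_{\mu'}f(\eta)$ and $F_e(\eta^{-1})$ to $F_\eta(e)$, so that all inner products can be transported back to $TC_{f(e)}Y$ and assembled. The genuinely nontrivial inequality steps are just two: the variance inequality \eqref{variance_iequality1} in each $TC_{f(\eta)}Y$, and Lemma \ref{barycenter_lem} for the cross term; everything else is an identity forced by $\rho$-equivariance and $\Gamma$-invariance of the walks. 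A subtle point to check is that $\pi_{f(\eta)}$ is only distance-nonincreasing, not an isometry, so when I pass $d_Y(f(e),f(\gamma))^2$ down to $TC_{f(\eta)}Y$ I must do so for the terms where it helps (the $\mu'$-average, via the genuine definition of $-\Delta_{\mu'}f(\eta)$ as a barycenter in the tangent cone and the fact that the barycenter in $Y$ projects to it) and keep the other distances in $Y$ itself; reconciling these two viewpoints is the delicate part of the argument.
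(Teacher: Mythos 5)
Your underlying strategy---insert the intermediate point $\eta$, pass to the tangent cone at $f(\eta)$, and handle the cross term with Lemma \ref{barycenter_lem} and equivariant transport---is exactly the paper's argument, but your middle paragraph does not assemble into a proof, and the culprit is the detour through the $Y$-barycenter $\overline{\nu_\eta}$. First, the variance inequality \eqref{variance_iequality1} with $w=f(e)$ produces the term $\sum_\gamma\mu'(\eta,\gamma)\,d_Y(f(\gamma),\overline{\nu_\eta})^2$, which is at most, not at least, the local energy $\sum_\gamma\mu'(\eta,\gamma)\,d_Y(f(\gamma),f(\eta))^2$ (the barycenter minimizes that functional), so in a chain of lower bounds it cannot be replaced by $2E_{\mu'}(f)$. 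Second, your expansion of $d_Y(\overline{\nu_\eta},f(e))^2$ silently substitutes $-\Delta_{\mu'}f(\eta)$ for $\pi_{f(\eta)}(\overline{\nu_\eta})$; these differ in general ($\pi_{f(\eta)}$ does not intertwine barycenters, and $|\pi_{f(\eta)}(\overline{\nu_\eta})|=d_Y(f(\eta),\overline{\nu_\eta})$ need not equal $|-\Delta_{\mu'}f(\eta)|$), so that step is not an identity. Third, the bound you assemble carries the extra summand $\tfrac12|-\Delta_{\mu'}f(e)|^2$ and is therefore strictly stronger than \eqref{2-step_ineq}; since \eqref{2-step_ineq} is an equality for Hilbert targets (Remark \ref{Hilbert_case}), such a bound is false whenever $-\Delta_{\mu'}f(e)\neq 0_{f(e)}$ (already for $\Gamma=\Z$ acting on $\R$ by translations and $f$ non-harmonic). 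That spurious term is the signature of the double count in the first point: in the Hilbert case the local energy at $f(\eta)$ equals the variance about $\overline{\nu_\eta}$ plus $|-\Delta_{\mu'}f(\eta)|^2$, so you cannot keep both.

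The repair is to drop $\overline{\nu_\eta}$ altogether, as your own parenthetical ``equivalently, project to $TC_{f(\eta)}Y$'' and your last paragraph suggest---and this is precisely the paper's proof. Since $\pi_{f(\eta)}$ preserves distances to its base point and is distance-nonincreasing, the cone law of cosines gives
\begin{equation*}
d_Y(f(e),f(\gamma))^2\ \geq\ d_Y(f(\eta),f(e))^2+d_Y(f(\eta),f(\gamma))^2-2\langle F_\eta(e),F_\eta(\gamma)\rangle ,
\end{equation*}
and averaging in $\gamma$ and $\eta$ yields $E_{\mu'}(f)+E_{\mu}(f)$ plus a cross term; the only genuine inequalities then needed are two applications of Lemma \ref{barycenter_lem}: first at the base point $\eta$ with $v=F_\eta(e)$ for the $\mu'(\eta,\cdot)$-average, then---after transporting by $\rho(\eta^{-1})_*$, which sends $F_\eta(e)$ to $F_e(\eta^{-1})$ (not $F_e(\eta)$), and re-indexing via $\mu(e,\eta)=\mu(e,\eta^{-1})$---at the base point $e$ with $v=-\Delta_{\mu'}f(e)$ for the $\mu(e,\cdot)$-average, exactly as in \eqref{easy_computation} and \eqref{use_lemma}. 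Rewritten along these lines, your argument goes through and coincides with the paper's.
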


\begin{proof}
Since $\pi_{f(\gamma)}\colon Y \longrightarrow TC_{f(\gamma)}Y$ is distance-nonincreasing, 
we obtain 
 \begin{eqnarray}\label{easy_computation}
   E_{\mu * \mu'}(f) 
   &=& \frac{1}{2}\sum_{\gamma, \gamma'} 
   \mu'(e,\gamma)\mu(\gamma,\gamma')d(f(e),f(\gamma'))^2 \nonumber\\
   &\geq& \frac{1}{2}\sum_{\gamma, \gamma'}
   \mu'(e,\gamma)\mu(\gamma,\gamma')
   d_{f(\gamma)}(F_{\gamma}(e),F_{\gamma}(\gamma'))^2 \nonumber\\
   &=& \frac{1}{2}\sum_{\gamma, \gamma'}
   \mu'(e,\gamma)\mu(\gamma,\gamma')
   \left(|F_{\gamma}(e)|^2+|F_{\gamma}(\gamma')|^2 
   -2\langle F_{\gamma}(e), F_{\gamma}(\gamma')\rangle \right) \\
   &=& \frac{1}{2} \sum_{\gamma}\mu'(e,\gamma) d(f(e),f(\gamma))^ 2 
   + \frac{1}{2} \sum_{\gamma}\mu'(e,\gamma)
   \sum_{\gamma'}\mu(e,\gamma^{-1}\gamma') \nonumber\\ 
   && \times\, d(f(e),f(\gamma^{-1}\gamma'))^2 
   - \sum_{\gamma, \gamma'} \mu'(e,\gamma)\mu(\gamma,\gamma')
     \langle F_{\gamma}(e), F_{\gamma}(\gamma') \rangle, \nonumber
 \end{eqnarray}
where we have used 
$|F_{\gamma}(\gamma')|=d(f(\gamma),f(\gamma')) = d(f(e),f(\gamma^{-1}\gamma'))$ 
and $\mu(\gamma,\gamma') = \mu(e,\gamma^{-1}\gamma')$; 
these follow from the $\rho$-equivariance of $f$ and the  $\Gamma$-invariance of $\mu$ respectively. 
The first and second terms in the last expression of \eqref{easy_computation} equal to $E_{\mu'}(f)$ 
and $E_{\mu}(f)$ respectively. 
On the other hand, using Lemma \ref{barycenter_lem} twice, we estimate the third term from below as 
  \begin{eqnarray}\label{use_lemma}
 \lefteqn{- \sum_{\gamma} \mu'(e,\gamma) \sum_{\gamma'}\mu(\gamma,\gamma') 
     \langle F_{\gamma}(e), F_{\gamma}(\gamma') \rangle} \nonumber \\
   &\geq & - \sum_{\gamma} \mu'(e,\gamma) 
        \langle F_{\gamma}(e), -\Delta_{\mu} f(\gamma)\rangle \\
   &= & - \sum_{\gamma} \mu'(e,\gamma^{-1}) 
        \langle F_{e}(\gamma^{-1}), -\Delta_{\mu} f(e)\rangle \nonumber \\
   &\geq & - \langle -\Delta_{\mu'}f(e), -\Delta_{\mu}f(e) \rangle. \nonumber 
  \end{eqnarray}
 To deduce the equality on the third line, one has to notice that
 $\rho(\gamma^{-1})$ induces an isometry $\rho(\gamma^{-1})_{*}\colon  
TC_{f(\gamma)}Y\longrightarrow TC_{f(e)}Y$, which maps $F_{\gamma}(e)$ and
 $-\Delta_{\mu} f(\gamma)$ to $F_{e}(\gamma^{-1})$ and 
 $-\Delta_{\mu} f(e)$ respectively. 
We have also used $\mu'(e,\gamma) = \mu'(e, \gamma^{-1})$. 
Combining these inequalities completes the proof. 
\end{proof}

\begin{Remark}\label{Hilbert_case}
Note that the difference between the both sides of \eqref{2-step_ineq} comes from 
the curvature of $Y$ and the nonlinearity of the tangent cones of $Y$. 
The former possibly makes the projection $Y\longrightarrow TC_pY$ distance-decreasing and causes
strict inequality in \eqref{easy_computation}. 
On the other hand, 
the latter may force the inequalities in \eqref{barycenter_ineq}, and thus in \eqref{use_lemma}, to become strict ones. 
In particular,  \eqref{2-step_ineq} becomes an equality when $Y$ is a Hilbert space. 
\end{Remark}

\begin{Corollary}\label{n-step_ineq}
 For any $\Gamma$-invariant, finitely supported and symmetric random walk
 $\mu$, $\rho$-equivariant map $f$, and positive integer $n$, 
 \begin{equation}\label{n-step_inequality} 
  E_{\mu^n}(f) \geq nE_{\mu}(f)- 
  \sum_{i=1}^{n-1}\langle -\Delta_i f(e), -\Delta_1 f(e) \rangle
 \end{equation}
 holds, where $-\Delta_i f(e)$ denotes 
the barycenter of $(F_e)_*(\mu^i(e,\cdot))$. 
\end{Corollary}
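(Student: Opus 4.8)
The plan is to prove Corollary \ref{n-step_ineq} by induction on $n$, using the two-step inequality \eqref{2-step_ineq} as the inductive engine. For $n=1$ the statement is trivial (the sum on the right is empty, and both sides equal $E_\mu(f)$). For the inductive step, I would write $\mu^n = \mu^{n-1} * \mu$ and apply \eqref{2-step_ineq} with $\mu' = \mu^{n-1}$ (noting that $\mu^{n-1}$ is again $\Gamma$-invariant, finitely supported and symmetric, the last point because $\mu$ is symmetric and these properties are preserved under convolution). This gives
\begin{equation*}
E_{\mu^n}(f) \;\geq\; E_{\mu^{n-1}}(f) + E_\mu(f) - \langle -\Delta_{n-1} f(e),\, -\Delta_1 f(e)\rangle,
\end{equation*}
where I have used that the barycenter of $(F_e)_*(\mu^{n-1}(e,\cdot))$ is exactly $-\Delta_{n-1} f(e)$ in the notation of the corollary.

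Next I would feed in the inductive hypothesis
\begin{equation*}
E_{\mu^{n-1}}(f) \;\geq\; (n-1) E_\mu(f) - \sum_{i=1}^{n-2}\langle -\Delta_i f(e),\, -\Delta_1 f(e)\rangle,
\end{equation*}
and combine the two displays. The $E_\mu(f)$ terms add up to $n E_\mu(f)$, and the inner-product terms combine to $\sum_{i=1}^{n-1}\langle -\Delta_i f(e), -\Delta_1 f(e)\rangle$, which is precisely \eqref{n-step_inequality}. One small bookkeeping point to address: the Proposition preceding the corollary states \eqref{2-step_ineq} with the convolution $\mu * \mu'$, and I should make clear that convolution of probability measures on $\Gamma$ is commutative in the relevant sense (or simply note that $\mu^{n-1} * \mu = \mu * \mu^{n-1} = \mu^n$ as a measure, since everything is $\Gamma$-invariant), so that the Proposition applies with this choice of $\mu$ and $\mu'$.

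I do not expect a genuine obstacle here — the corollary is a routine iteration of the preceding Proposition. The only place demanding a line of care is verifying that $\mu^{n-1}$ satisfies the hypotheses of the Proposition (\emph{$\Gamma$-invariance} and \emph{symmetry} of convolutions), and matching the barycenter notation: the Proposition's $-\Delta_{\mu'} f(e)$ with $\mu' = \mu^{n-1}$ is the barycenter of $(F_e)_*(\mu^{n-1}(e,\cdot))$, which the corollary abbreviates as $-\Delta_{n-1} f(e)$. Once these identifications are stated, the induction closes immediately.
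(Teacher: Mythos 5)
Your proposal is correct and follows essentially the same route as the paper: induction on $n$, applying the two-step inequality \eqref{2-step_ineq} with $\mu' = \mu^{n-1}$ and then inserting the inductive hypothesis. The bookkeeping points you flag (symmetry and $\Gamma$-invariance of $\mu^{n-1}$, the convolution order giving $\mu^n$, and the identification of $-\Delta_{\mu^{n-1}}f(e)$ with $-\Delta_{n-1}f(e)$) are exactly the implicit identifications the paper relies on, so the argument closes as you describe.
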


\begin{proof}
To prove by induction, suppose the inequality is true for $n-1$.  
Then by the proposition above
 \begin{eqnarray*}
E_{\mu^n}(f) &\geq & E_{\mu^{n-1}}(f)+E(f)- 
  \langle -\Delta_{n-1}f(e),-\Delta_1 f(e) \rangle \\
  &\geq & (n-1)E_{\mu}(f)- \sum_{i=1}^{n-2}
   \langle -\Delta_i f(e), -\Delta_1 f(e) \rangle 
   +E_{\mu}(f)\\ 
&& -\langle -\Delta_{n-1}f(e), -\Delta_1 f(e) \rangle \\
  &= & nE_{\mu}(f) - \sum_{i=1}^{n-1}
   \langle -\Delta_i f(e), -\Delta_1 f(e) \rangle. 
 \end{eqnarray*}
This completes the proof of Corollary \ref{n-step_ineq}.
\end{proof}

\begin{Remark}\label{remark_on_n-step_inequality} 
By the previus remark, \eqref{n-step_inequality} becomes an equality when $Y$ is a Hilbert space. 
See the next subsection for more on the Hilbertian case. 
\end{Remark}

\begin{Remark}\label{growth_for_general_case} 
If $f$ is harmonic, then we have $E_{\mu^{n}}(f) \geq n E_{\mu}(f)$, and the strict inequality 
possibly holds by the reason as explained in Remark \ref{Hilbert_case}. 
It is natural to expect that $E_{\mu^n}(f)/E_{\mu}(f)$ is bounded by a constant depending 
on some kind of growth rate of $Y$. 
As the following lemma shows, such a constant should not exceed $n^2$. 
\end{Remark} 

\begin{Lemma}\label{n-step_energy_and_gradient} 
 Let $\mu$ be a $\Gamma$-invariant, finitely supported and symmetric random
 walk on $\Gamma$, and $f\colon \Gamma\longrightarrow Y$ a $\rho$-equivariant map. 
Then the following estimates hold{\rm :} \\ 
{\rm (1)}\quad $|-\Delta_{\mu} f(e)|^2 \leq 2E_{\mu}(f)$. \\ 
{\rm (2)}\quad $E_{\mu^n}(f) \leq n^2 E_{\mu}(f)$. \\ 
{\rm (3)}\quad $|-\Delta_{\mu^n}f(e)|^2 \leq 2n^2E_{\mu}(f)$. 
\end{Lemma}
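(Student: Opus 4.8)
The plan is to prove the three estimates in order, since (2) uses (1) and (3) uses (2). For (1), I would start from the defining inequality \eqref{barycenter_ineq} of Lemma~\ref{barycenter_lem}, specialized to $v = -\Delta_\mu f(e) \in TC_{f(e)}Y$. Combined with \eqref{inner_product_equality} (or equivalently the equality case of Lemma~\ref{inner_product}, which gives $|{-\Delta_\mu f(e)}|^2 = \sum_{\gamma}\mu(e,\gamma)\langle F_e(\gamma), -\Delta_\mu f(e)\rangle$), and then applying the Cauchy--Schwarz inequality $\langle F_e(\gamma), -\Delta_\mu f(e)\rangle \le |F_e(\gamma)|\,|{-\Delta_\mu f(e)}|$ together with $|F_e(\gamma)| = d_Y(f(e), f(\gamma))$, one gets $|{-\Delta_\mu f(e)}|^2 \le |{-\Delta_\mu f(e)}| \sum_\gamma \mu(e,\gamma)\, d_Y(f(e),f(\gamma))$. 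A further Cauchy--Schwarz in $\gamma$ (with weights $\mu(e,\gamma)$) bounds $\sum_\gamma \mu(e,\gamma)\,d_Y(f(e),f(\gamma))$ by $\left(\sum_\gamma \mu(e,\gamma)\,d_Y(f(e),f(\gamma))^2\right)^{1/2} = (2E_\mu(f))^{1/2}$, and dividing by $|{-\Delta_\mu f(e)}|$ yields (1). (The degenerate case $-\Delta_\mu f(e) = 0_{f(e)}$ is trivial.)

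For (2), I would use the Cauchy--Schwarz / triangle inequality structure of $n$-step distances. Writing $d_Y(f(e),f(\gamma))^2$ with $\gamma$ reached in $n$ steps and using the triangle inequality along an $n$-step path $e = \gamma_0, \gamma_1, \dots, \gamma_n = \gamma$, one has $d_Y(f(\gamma_0),f(\gamma_n)) \le \sum_{i=0}^{n-1} d_Y(f(\gamma_i),f(\gamma_{i+1}))$, hence by Cauchy--Schwarz $d_Y(f(\gamma_0),f(\gamma_n))^2 \le n \sum_{i=0}^{n-1} d_Y(f(\gamma_i),f(\gamma_{i+1}))^2$. Summing against the product measure $\prod_{i}\mu(\gamma_i,\gamma_{i+1})$ over all $n$-step paths, each of the $n$ summands reduces, after using the $\Gamma$-invariance of $\mu$ and the $\rho$-equivariance of $f$ (exactly as in the displayed computation \eqref{easy_computation}), to $2E_\mu(f)$; this gives $2E_{\mu^n}(f) \le n \cdot n \cdot 2E_\mu(f)$, i.e.\ (2). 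Then (3) is immediate: apply (1) with $\mu$ replaced by $\mu^n$ (which is again $\Gamma$-invariant, finitely supported and symmetric), getting $|{-\Delta_{\mu^n}f(e)}|^2 \le 2E_{\mu^n}(f)$, and then substitute (2).

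I do not expect a serious obstacle here; the only mild subtlety is making sure in (1) that the Cauchy--Schwarz step on the tangent cone is legitimate, which it is because $\langle\cdot,\cdot\rangle$ on $TC_{f(e)}Y$ satisfies $\langle v,w\rangle = |v||w|\cos\angle \le |v||w|$ by definition, and in (2) that the triangle inequality is applied in $Y$ (not in a tangent cone) so no distance-decreasing projection enters. Reindexing the $n$-step sums and identifying each term with $2E_\mu(f)$ is the routine bookkeeping already illustrated in \eqref{easy_computation}, so I would simply refer back to that computation rather than repeat it in full.
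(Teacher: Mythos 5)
Your proposal is correct, and for parts (2) and (3) it follows the paper's argument exactly: the triangle inequality along an $n$-step path, Cauchy--Schwarz to get the factor $n$, the reindexing via $\Gamma$-invariance of $\mu$ and $\rho$-equivariance of $f$ identifying each summand with $E_\mu(f)$, and then (3) by applying (1) to the random walk $\mu^n$ (which is again $\Gamma$-invariant, finitely supported and symmetric) together with (2). The only genuine difference is in (1): the paper gets it in one line by applying the variance inequality \eqref{variance_iequality1} to the measure $(F_e)_*(\mu(e,\cdot))$ with $w = 0_{f(e)}$, which gives $|-\Delta_\mu f(e)|^2 \le \sum_\gamma \mu(e,\gamma)\, d_{TC_{f(e)}Y}(0_{f(e)}, F_e(\gamma))^2 = 2E_\mu(f)$ directly, whereas you route through the barycenter identity \eqref{inner_product_equality} and two applications of Cauchy--Schwarz (one on the cone's pseudo-inner product, one in $\gamma$ with weights $\mu(e,\gamma)$). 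Your version is valid --- the cone inequality $\langle v,w\rangle \le |v||w|$ holds by definition and the degenerate case is handled --- but it is slightly longer and uses more machinery than needed; the variance inequality gives the same conclusion without touching the inner product at all. One cosmetic point: the reindexing you invoke is the one carried out in the display following \eqref{computation3} in the paper's own proof of (2), rather than \eqref{easy_computation} (which is the two-step convolution computation), though the mechanism is the same.
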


\begin{proof} 
We first prove (1). 
Using the variance inequality \eqref{variance_iequality1}, we obtain 
\begin{eqnarray*} 
|-\Delta_{\mu} f(e)|^2 &=& d_{TC_{f(e)}Y}(0_ {f(e)}, -\Delta_{\mu} f(e))^2 \\ 
&\leq& \sum_{\gamma\in \Gamma} \mu(e,\gamma)\, d_{TC_{f(e)}Y}(0_ {f(e)}, F_e(\gamma))^2 \\ 
&=& \sum_{\gamma\in \Gamma} \mu(e,\gamma)\, d_Y(f(e), f(\gamma))^2 \,\,=\,\, 2E_\mu(f). 
\end{eqnarray*} 

To prove (2), we compute
 \begin{eqnarray}\label{computation3}
 E_{\mu^n}(f) 
  &= & \frac{1}{2}\sum_{\gamma_1,\dots,\gamma_n}\mu(e,\gamma_1) \cdots
    \mu(\gamma_{n-1},\gamma_n)d(f(e),f(\gamma_n))^2 \nonumber \\
  &\leq & \frac{1}{2}\sum_{\gamma_1,\dots, \gamma_n}\mu(e,\gamma_1) \cdots
    \mu(\gamma_{n-1},\gamma_n) \left( \sum_{i=1}^n  d(f(\gamma_{i-1}),f(\gamma_i)) \right)^2 \\
  &\leq & \frac{1}{2}\sum_{\gamma_1,\dots \gamma_n}\mu(e,\gamma_1) \cdots
    \mu(\gamma_{n-1},\gamma_n) \cdot n \sum_{i=1}^n  d(f(\gamma_{i-1}),f(\gamma_i))^2, \nonumber
 \end{eqnarray}
where $\gamma_0 = e$. 
Note that by $\Gamma$-invariance of $\mu$ and the $\rho$-equivariance of $f$, 

 \begin{eqnarray*}
\lefteqn{\frac{1}{2}\sum_{\gamma_1, \dots, \gamma_n}\mu(e,\gamma_1) \dots 
     \mu(\gamma_{n-1}, \gamma_n)
     d(f(\gamma_{i-1}),f(\gamma_i))^2} \\
  & = & \frac{1}{2}\sum_{\gamma_{i-1}, \gamma_i, \gamma_n} 
     \mu^{i-1}(e, \gamma_{i-1}) \mu(\gamma_{i-1}, \gamma_i) \mu^{n-i}(\gamma_i, \gamma_n) 
     d(f(\gamma_{i-1}),f(\gamma_i))^2 \\
  & = & \frac{1}{2}\sum_{\gamma_{i-1}, \gamma_i} 
     \mu^{i-1}(e, \gamma_{i-1}) \mu(\gamma_{i-1}, \gamma_i) 
     d(f(\gamma_{i-1}),f(\gamma_i))^2 \\
  & = & \frac{1}{2}\sum_{\gamma_{i-1}, \gamma_i} 
     \mu^{i-1}(e, \gamma_{i-1}) \mu(e, \gamma_{i-1}^{-1}\gamma_i) 
     d(f(e),f(\gamma_{i-1}^{-1} \gamma_i))^2 \\
  & = & \sum_{\gamma_{i-1}} \mu^{i-1}(e, \gamma_{i-1}) E_{\mu}(f) 
   = E_{\mu}(f). 
 \end{eqnarray*}
 Together with \eqref{computation3}, this implies 
 $E_{\mu^n}(f) \leq n^2 E_{\mu}(f)$. 
 Now (3) follows from (1) and (2). 
\end{proof}

\begin{Proposition}
 Let $\mu$ be a $\Gamma$-invariant, finitely supported, symmetric
 random walk and $f\colon \Gamma \longrightarrow Y$ a $\rho$-equivariant map.  
Then
 \begin{equation}\label{n-step_ineq3}
  E_{\mu^n}(f) \geq nE_{\mu}(f) - \frac{n(n-1)}{2}\sqrt{2E_{\mu}(f)}
 \left|-\Delta_{\mu}(f) \right|
 \end{equation}
 holds. 
\end{Proposition}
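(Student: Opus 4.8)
The plan is to feed the two inequalities already proved in this subsection into one another. Start from Corollary \ref{n-step_ineq}, which gives
\begin{equation*}
E_{\mu^n}(f) \geq nE_{\mu}(f) - \sum_{i=1}^{n-1}\langle -\Delta_i f(e), -\Delta_1 f(e) \rangle,
\end{equation*}
where $-\Delta_i f(e)$ is the barycenter of $(F_e)_*(\mu^i(e,\cdot))$ and $-\Delta_1 f(e) = -\Delta_\mu f(e)$. It therefore suffices to bound each inner product $\langle -\Delta_i f(e), -\Delta_1 f(e)\rangle$ from above by $i\sqrt{2E_\mu(f)}\,|-\Delta_\mu f(e)|$ and then sum using $\sum_{i=1}^{n-1} i = n(n-1)/2$.

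For the upper bound on a single term, I would first invoke the elementary fact that on the tangent cone $TC_{f(e)}Y$ one has $\langle v,w\rangle = |v|\,|w|\cos\angle_{f(e)}(v,w) \leq |v|\,|w|$, since $\cos\angle_{f(e)} \leq 1$; hence
\begin{equation*}
\langle -\Delta_i f(e), -\Delta_1 f(e) \rangle \leq |-\Delta_i f(e)|\,\cdot\,|-\Delta_\mu f(e)|.
\end{equation*}
Next, $-\Delta_i f(e)$ is nothing but $-\Delta_{\mu^i} f(e)$, so Lemma \ref{n-step_energy_and_gradient}(3), applied with $n$ replaced by $i$, yields $|-\Delta_i f(e)|^2 \leq 2i^2 E_\mu(f)$, i.e. $|-\Delta_i f(e)| \leq i\sqrt{2E_\mu(f)}$ (note $E_\mu(f)\geq 0$, so the square root makes sense). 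Combining,
\begin{equation*}
\sum_{i=1}^{n-1}\langle -\Delta_i f(e), -\Delta_1 f(e) \rangle \leq \left(\sum_{i=1}^{n-1} i\right)\sqrt{2E_\mu(f)}\,|-\Delta_\mu f(e)| = \frac{n(n-1)}{2}\sqrt{2E_\mu(f)}\,|-\Delta_\mu f(e)|,
\end{equation*}
and substituting this into the Corollary gives \eqref{n-step_ineq3}.

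There is essentially no serious obstacle here: the proposition is a clean corollary obtained by assembling Corollary \ref{n-step_ineq} with part (3) of Lemma \ref{n-step_energy_and_gradient} and the trivial Cauchy--Schwarz-type estimate $\langle v,w\rangle \leq |v|\,|w|$ valid in any $\cat$ tangent cone. The only point worth stating carefully is that the quantity denoted $|-\Delta_\mu(f)|$ in the statement is $|-\Delta_\mu f(e)|$, the length in $TC_{f(e)}Y$ of the negative gradient, so that the notation $-\Delta_1 f(e)$ from the Corollary matches it.
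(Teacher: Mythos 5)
Your proof is correct and is essentially the paper's own argument: Corollary \ref{n-step_ineq} combined with the Cauchy--Schwarz-type bound $\langle v,w\rangle \leq |v|\,|w|$ in the tangent cone and Lemma \ref{n-step_energy_and_gradient}(3) applied with $i$ in place of $n$, then summing $\sum_{i=1}^{n-1} i = n(n-1)/2$. No gaps; your explicit remark identifying $|-\Delta_\mu(f)|$ with $|-\Delta_\mu f(e)|$ is a harmless clarification.
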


\begin{proof}
 By Corollary \ref{n-step_ineq} and Lemma \ref{n-step_energy_and_gradient}
 (3), we obtain
 \begin{eqnarray*}
  E_{\mu^n}(f) & \geq & nE_{\mu}(f)- 
  \sum_{i=1}^{n-1}\langle -\Delta_i f(e), -\Delta_1 f(e) \rangle \\
  & \geq & nE_{\mu}(f)-\sum_{i=1}^{n-1} i\sqrt{2E_{\mu}(f)}|-\Delta_1 f(e)|.
 \end{eqnarray*}
 This implies \eqref{n-step_ineq3}. 
\end{proof}

We can now prove 

\begin{Theorem}[Gromov \cite{Gromov3}]\label{n-step}
Suppose there exist a positive integer $n$ and a positive real number $\varepsilon$
such that 
\begin{equation}\label{assumption_on_n-step_energy} 
E_{\mu^n}(f) \leq (n-\varepsilon)E_{\mu}(f)
\end{equation}
holds for any $\rho$-equivariant map $f\colon \Gamma\longrightarrow Y$. 
Then there exists a positive constant $C$ as in Proposition \ref{gradient}.
In particular, $\rho(\Gamma)$ admits a global fixed point. 
\end{Theorem}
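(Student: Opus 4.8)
The plan is to combine the inequality \eqref{n-step_ineq3} with the estimate in Lemma \ref{n-step_energy_and_gradient}(1) in order to convert the hypothesis \eqref{assumption_on_n-step_energy} into a lower bound on $|-\Delta_{\mu} f(e)|$ in terms of $E_{\mu}(f)$, which is exactly the conclusion of Proposition \ref{gradient}. First I would dispose of the trivial case: if $E_{\mu}(f) = 0$ for every $\rho$-equivariant map $f$, then $f$ is already constant and $\rho(\Gamma)$ fixes $f(e)$, so we may assume there exists $f$ with $E_{\mu}(f) > 0$; in fact the argument below will produce the constant $C$ uniformly, so it suffices to work with an arbitrary $f$ satisfying $E_{\mu}(f) > 0$.

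Fix such an $f$. Feeding the hypothesis $E_{\mu^n}(f) \leq (n - \varepsilon) E_{\mu}(f)$ into \eqref{n-step_ineq3} gives
\begin{equation*}
(n-\varepsilon) E_{\mu}(f) \;\geq\; n E_{\mu}(f) - \frac{n(n-1)}{2}\sqrt{2E_{\mu}(f)}\,\bigl|-\Delta_{\mu} f(e)\bigr|,
\end{equation*}
hence, rearranging,
\begin{equation*}
\frac{n(n-1)}{2}\sqrt{2E_{\mu}(f)}\,\bigl|-\Delta_{\mu} f(e)\bigr| \;\geq\; \varepsilon\, E_{\mu}(f).
\end{equation*}
Since $E_{\mu}(f) > 0$ we may divide by $\sqrt{E_{\mu}(f)}$ and square, obtaining
\begin{equation*}
\bigl|-\Delta_{\mu} f(e)\bigr|^2 \;\geq\; \frac{2\varepsilon^2}{n^2(n-1)^2}\, E_{\mu}(f).
\end{equation*}
(Note $n \geq 2$ here, since for $n = 1$ the hypothesis would read $E_{\mu}(f) \le (1-\varepsilon)E_{\mu}(f)$, forcing $E_{\mu}(f) = 0$ and putting us back in the trivial case; so the denominator does not vanish.) Thus the constant $C = 2\varepsilon^2 / (n^2(n-1)^2)$ works in Proposition \ref{gradient}, and applying that proposition yields a global fixed point for $\rho(\Gamma)$.

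The only real subtlety is bookkeeping around degenerate cases — ensuring the division by $E_{\mu}(f)$ and by $n-1$ is legitimate — and checking that the constant $C$ produced is genuinely independent of $f$, which it is since $n$ and $\varepsilon$ are fixed in the hypothesis. There is no hard analytic obstacle: all the work has been front-loaded into \eqref{n-step_ineq3} and Proposition \ref{gradient}, and this theorem is just the clean packaging of those two ingredients.
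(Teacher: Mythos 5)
Your proof is correct and follows essentially the same route as the paper: substitute the hypothesis into \eqref{n-step_ineq3}, rearrange, and square to get $|-\Delta_{\mu}f(e)|^2 \geq \frac{2\varepsilon^2}{n^2(n-1)^2}E_{\mu}(f)$, then invoke Proposition \ref{gradient}. The extra bookkeeping about $E_{\mu}(f)=0$ and $n=1$ is harmless and only makes explicit what the paper leaves implicit.
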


\begin{proof}
Suppose \eqref{assumption_on_n-step_energy} holds for a $\rho$-equivariant map $f$. 
By \eqref{n-step_ineq3}, we see
\begin{equation*}
(n-\varepsilon)E_{\mu}(f)\geq  nE_{\mu}(f)- \frac{n(n-1)}{2}\sqrt{2E_{\mu}(f)} |-\Delta_{\mu}f(e)|, 
\end{equation*}
from which we get 
\begin{equation*}
 |-\Delta_{\mu}f(e)|^2 \geq
   \frac{2\varepsilon^2}{n^2(n-1)^2} E_{\mu}(f).
\end{equation*}
This completes the proof of Theorem \ref{n-step}. 
\end{proof}

In the next section, we will need the following result, which is slightly more general than 
the above theorem and follows immediately from its proof. 
\begin{Corollary}\label{first_proposition}\label{l-step} 
Suppose there exist a positive integer $n$ and a positive real number $\varepsilon$ 
satisfying the following condition{\rm :} 
for any $\rho$-equivariant map $f\colon \Gamma\longrightarrow Y$, there exists $l\leq n$ such that 
$$
E_{\mu^l}(f) \leq (l-\varepsilon) E_{\mu}(f). 
$$
Then there exists a positive constant $C$ as in Proposition \ref{gradient}.
In particular, $\rho(\Gamma)$ admits a global fixed point. 
\end{Corollary}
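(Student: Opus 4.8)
The plan is to re-examine the proof of Theorem \ref{n-step} and notice that it invokes hypothesis \eqref{assumption_on_n-step_energy} only through a single exponent, and that the lower bound for $\bigl|-\Delta_\mu f(e)\bigr|^2$ it produces is monotone in that exponent. Concretely, fix a $\rho$-equivariant map $f\colon\Gamma\longrightarrow Y$, and let $l=l(f)\leq n$ be the integer furnished by the hypothesis, so that $E_{\mu^l}(f)\leq(l-\varepsilon)E_\mu(f)$. Applying inequality \eqref{n-step_ineq3} with $n$ replaced by $l$ gives
\[
(l-\varepsilon)E_\mu(f)\ \geq\ E_{\mu^l}(f)\ \geq\ lE_\mu(f)-\frac{l(l-1)}{2}\sqrt{2E_\mu(f)}\,\bigl|-\Delta_\mu f(e)\bigr|,
\]
and hence
\[
\varepsilon E_\mu(f)\ \leq\ \frac{l(l-1)}{2}\sqrt{2E_\mu(f)}\,\bigl|-\Delta_\mu f(e)\bigr|.
\]

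Next I would separate two cases. If $l=1$, the hypothesis reads $E_\mu(f)\leq(1-\varepsilon)E_\mu(f)$, which forces $E_\mu(f)=0$; in that case $\bigl|-\Delta_\mu f(e)\bigr|^2\geq 0=C\,E_\mu(f)$ holds for any constant $C$. If $l\geq 2$, I divide the last display by $\sqrt{E_\mu(f)}$ (again the case $E_\mu(f)=0$ being trivial) and square to obtain
\[
\bigl|-\Delta_\mu f(e)\bigr|^2\ \geq\ \frac{2\varepsilon^2}{l^2(l-1)^2}\,E_\mu(f)\ \geq\ \frac{2\varepsilon^2}{n^2(n-1)^2}\,E_\mu(f),
\]
where the second inequality uses $2\leq l\leq n$, so that $l^2(l-1)^2\leq n^2(n-1)^2$ and in particular $n\geq 2$. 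Thus, taking $C=2\varepsilon^2/(n^2(n-1)^2)$ when $n\geq 2$, and $C=1$ when $n=1$ (in which latter case necessarily $l(f)=1$ and $E_\mu(f)=0$ for every $f$), we obtain a single positive constant $C$, independent of $f$ and of the choice of $l(f)$, such that $\bigl|-\Delta_\mu f(e)\bigr|^2\geq C\,E_\mu(f)$ for every $\rho$-equivariant map $f$.

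With this uniform estimate in hand, Proposition \ref{gradient} applies verbatim and produces a global fixed point of $\rho(\Gamma)$, which completes the proof. I do not expect a genuine obstacle here: the entire content is the bookkeeping observation that the constant extracted in the proof of Theorem \ref{n-step} only improves as the exponent decreases, so permitting $l$ to depend on $f$ (as long as $l\leq n$) changes nothing. The only points requiring a word of care are the boundary exponent $l=1$ and the vanishing-energy case, both disposed of trivially above.
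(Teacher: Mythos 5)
Your proposal is correct and follows exactly the route the paper intends: it reruns the proof of Theorem \ref{n-step} with the exponent $l=l(f)\leq n$ in place of $n$, applies inequality \eqref{n-step_ineq3}, and notes that the resulting constant $2\varepsilon^2/l^2(l-1)^2$ is bounded below by the uniform constant $2\varepsilon^2/n^2(n-1)^2$, after which Proposition \ref{gradient} finishes the argument. The paper gives no separate proof (it states the corollary "follows immediately" from the proof of the theorem), and your careful treatment of the edge cases $l=1$, $E_\mu(f)=0$, and $n=1$ only makes the bookkeeping explicit.
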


\subsection{Affine case}\label{affine_case}

Next we examine the behavior of $E_{\mu^n}(f)$ in the affine case, 
namely, the case when $Y$ is taken to be a real Hilbert space. 

Let $\Gamma$ be a finitely generated group and $\mu$ a $\Gamma$-invariant, finitely supported 
and symmetric random walk on $\Gamma$. 
Let $\rho\colon \Gamma \longrightarrow \isom(\mathcal{H})$ be a homomorphism 
and $f\colon \Gamma\longrightarrow \mathcal{H}$ a $\rho$-equivariant map, 
where $\mathcal{H}$ is a Hilbert space. 
Then $-\Delta_\mu f(\gamma)$ is given by 
\begin{equation*}
 -\Delta_\mu f(\gamma) = 
 \sum_{\gamma' \in \Gamma}\mu(\gamma, \gamma') (f(\gamma')-f(\gamma)).
\end{equation*}
Recall that according to the semi-direct product decomposition 
$\isom(\mathcal{H}) = \mathcal{O}(\mathcal{H}) \ltimes \mathcal{H}$, 
where $\mathcal{O}(\mathcal{H})$ is the orthogonal group of $\mathcal{H}$, 
$\rho$ is decomposed into 
the pair $(\rho_0, b)$ of a homomorphism $\rho_0\colon \Gamma \longrightarrow \mathcal{O}(\mathcal{H})$ 
and a map $b\colon \Gamma \longrightarrow \mathcal{H}$, 
so that $\rho(\gamma)v = \rho_0(\gamma)v+ b(\gamma)$ for $\gamma\in\Gamma$ and $v\in \mathcal{H}$.
We note that $-\Delta_\mu f \colon \Gamma \longrightarrow \mathcal{H}$ is $\rho_0$-equivariant; 
in fact, 
\begin{eqnarray*}
  -\Delta_\mu f(\gamma'\gamma)
 & =& \sum_{\gamma''\in \Gamma}
     \mu(\gamma'\gamma,\gamma'')(f(\gamma'')-f(\gamma'\gamma)) \\
 & =& \sum_{\gamma''\in \Gamma}
     \mu(\gamma,\gamma'^{-1}\gamma'')
     (\rho(\gamma')f(\gamma'^{-1}\gamma'')-\rho(\gamma')f(\gamma)) \\
 &=& \sum_{\gamma''\in \Gamma}
     \mu(\gamma,\gamma'^{-1}\gamma'')
     \rho_0(\gamma')(f(\gamma'^{-1}\gamma'')-f(\gamma)) \\
 &=& \rho_0(\gamma') (-\Delta_\mu f(\gamma)). 
\end{eqnarray*}
Note that we have used the $\Gamma$-invariance of $\mu$ and the linearity of $\rho_0(\gamma')$. 

As in the general case, the set of $\rho_0$-equivariant maps from $\Gamma$ to $\mathcal{H}$, 
denoted by $\mathcal{M}_{\rho_0}$, is identified with $\mathcal{H}$ through the correspondence 
$\mathcal{M}_{\rho_0}\ni \varphi \mapsto \varphi(e) \in \mathcal{H}$. 
An inner product on $\mathcal{M}_{\rho_0}$ is defined in a natural way
as 
\begin{equation*}
 \langle \varphi, \psi \rangle_{\mathcal{M}_{\rho_0}}
 := \langle \varphi(e), \psi(e) \rangle 
( = \langle \varphi(\gamma), \psi(\gamma) \rangle).
\end{equation*}
We define an averaging operator $M$ by
\begin{equation*}
 M \varphi(\gamma) = \sum_{\gamma' \in \Gamma}\mu(\gamma, \gamma')
  \varphi(\gamma'), \quad \varphi \in \mathcal{M}_{\rho_0}.
\end{equation*}
Since $\rho_0(\gamma)$ is linear, we see that 
$M\varphi \in \mathcal{M}_{\rho_0}$:
\begin{eqnarray*}
M \varphi(\gamma''\gamma)
& = &\sum_{\gamma' \in \Gamma}\mu(\gamma''\gamma, \gamma')
      \varphi(\gamma')
 = \sum_{\gamma' \in \Gamma}\mu(\gamma, \gamma''^{-1}\gamma')
     \rho_0(\gamma'')\varphi(\gamma''^{-1}\gamma')\\
& = & \rho_0(\gamma'')M\varphi(\gamma).
\end{eqnarray*}
Thus, $M$ is a linear operator acting on $\mathcal{M}_{\rho_0}\cong \mathcal{H}$. 
Since $\mu$ is symmetric and $\Gamma$-invariant, $M$ is selfadjoint: 
\begin{eqnarray*}
 \langle M\varphi, \psi \rangle_{\mathcal{M}_{\rho_0}}
&  =& \langle M\varphi(e), \psi(e) \rangle 
   = \sum_{\gamma}\mu(e,\gamma)
   \langle\rho_0(\gamma)\varphi(e), \psi(e)\rangle \\
&  =& \sum_{\gamma}\mu(e, \gamma^{-1})
   \langle \varphi(e), \rho_0(\gamma^{-1})\psi(e) \rangle \\
&  =& \langle \varphi, M\psi \rangle_{\mathcal{M}_{\rho_0}}. 
\end{eqnarray*}
Using this operator $M$, we can rewrite $-\Delta_n f$, where $-\Delta_n = -\Delta_{\mu^n}$, 
as follows.
\begin{eqnarray*}
  -\Delta_n f(\gamma) 
  &= & \sum_{\gamma' \in \Gamma} \mu^n(\gamma, \gamma')
      (f(\gamma')-f(\gamma)) \\
  &= & \sum_{\gamma_1, \gamma' \in \Gamma}
    \mu(\gamma,\gamma_1)\mu^{n-1}(\gamma_1,\gamma')
    (f(\gamma')-f(\gamma_1)+f(\gamma_1)-f(\gamma)) \\
  &= & \sum_{\gamma_1}\mu(\gamma,\gamma_1)
    \left(-\Delta_{n-1} f(\gamma_1) + f(\gamma_1)-f(\gamma)\right) \\
  &= & M(-\Delta_{n-1} f)(\gamma) + (-\Delta_1 f)(\gamma), 
\end{eqnarray*}
and thus, 
\begin{equation*}
 -\Delta_n f = M(-\Delta_{n-1}f)+(-\Delta_1 f).
\end{equation*}
Proceeding inductively, we see that
\begin{equation}\label{expression_for_Delta_n}
  -\Delta_n f = (M^{n-1}+M^{n-2} + \dots + M + I)(-\Delta_1 f).
\end{equation}
In particular, we see that if $f$ is $\mu$-harmonic, then $f$ must be
$\mu^n$-harmonic. 

\begin{Corollary}\label{n-step_ineq2_affine}
 Let $Y$ be a Hilbert space. 
 For any $\Gamma$-invariant, finitely supported and symmetric random walk $\mu$ on $\Gamma$, 
$\rho$-equivariant map $f\colon \Gamma \longrightarrow Y$, and a positive integer $n$, 
 \begin{equation*}
  E_{\mu^n}(f) \leq nE_{\mu}(f)
 \end{equation*}
 holds. The equality holds if and only if $f$ is harmonic.
\end{Corollary}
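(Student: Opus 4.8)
The plan is to exploit the linear-algebraic identity \eqref{expression_for_Delta_n} together with the self-adjointness of the averaging operator $M$. First I would observe that, for a $\rho$-equivariant map $f$ into a Hilbert space $Y=\mathcal{H}$, Remark \ref{remark_on_n-step_inequality} (or directly Corollary \ref{n-step_ineq} combined with Remark \ref{Hilbert_case}) gives the \emph{equality}
\begin{equation*}
E_{\mu^n}(f) = nE_{\mu}(f) - \sum_{i=1}^{n-1}\langle -\Delta_i f(e), -\Delta_1 f(e)\rangle,
\end{equation*}
so it suffices to prove that the sum $\sum_{i=1}^{n-1}\langle -\Delta_i f(e), -\Delta_1 f(e)\rangle$ is nonnegative, and that it vanishes precisely when $f$ is harmonic.

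Next I would rewrite each inner product using \eqref{expression_for_Delta_n}: setting $w = -\Delta_1 f \in \mathcal{M}_{\rho_0}$, we have $-\Delta_i f = (I + M + \cdots + M^{i-1})w$, hence
\begin{equation*}
\langle -\Delta_i f(e), -\Delta_1 f(e)\rangle = \langle (I+M+\cdots+M^{i-1})w, w\rangle_{\mathcal{M}_{\rho_0}} = \sum_{j=0}^{i-1}\langle M^j w, w\rangle_{\mathcal{M}_{\rho_0}}.
\end{equation*}
Summing over $i$ from $1$ to $n-1$ collects the term $\langle M^j w, w\rangle$ with multiplicity $n-1-j$, so
\begin{equation*}
\sum_{i=1}^{n-1}\langle -\Delta_i f(e), -\Delta_1 f(e)\rangle = \sum_{j=0}^{n-2}(n-1-j)\,\langle M^j w, w\rangle_{\mathcal{M}_{\rho_0}}.
\end{equation*}
Since $M$ is self-adjoint, $\langle M^j w, w\rangle = \langle M^{j/2}w, M^{j/2}w\rangle = |M^{j/2}w|^2 \geq 0$ for even $j$; for odd $j=2k+1$ one has $\langle M^{2k+1}w,w\rangle = \langle M M^k w, M^k w\rangle$. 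Here I need that $M$ is a nonnegative operator, not merely self-adjoint. This is the one point requiring care: $M$ is the averaging operator of a symmetric random walk, and I would argue $M \geq 0$ — for instance by noting that $\|M\|\le 1$ is automatic and then either invoking a standard fact for lazy/symmetric walks, or more robustly by replacing the estimate with the direct variance inequality \eqref{variance_iequality2} which already shows each step of the telescoping is nonnegative. Actually the cleanest route avoids spectral positivity of $M$ entirely: apply the Proposition giving \eqref{2-step_ineq} (an equality in the Hilbert case) inductively, which yields $E_{\mu^n}(f) \ge nE_\mu(f) - \sum \langle -\Delta_i f(e), -\Delta_1 f(e)\rangle$ as an equality, and separately bound the cross terms using Cauchy–Schwarz together with the monotonicity $|-\Delta_i f(e)| \le |-\Delta_1 f(e)|$, which follows from \eqref{expression_for_Delta_n} and $\|M\|\le 1$ — but that only gives an upper bound on the correction of the wrong sign. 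So the honest approach is the telescoping one above.

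Therefore the key steps, in order, are: (i) reduce to showing $\sum_{i=1}^{n-1}\langle -\Delta_i f(e), -\Delta_1 f(e)\rangle \ge 0$ via the Hilbertian equality in Corollary \ref{n-step_ineq}/Remark \ref{remark_on_n-step_inequality}; (ii) expand via \eqref{expression_for_Delta_n} and collect coefficients to get $\sum_{j=0}^{n-2}(n-1-j)\langle M^j w, w\rangle$; (iii) prove $M \geq 0$ as an operator on $\mathcal{M}_{\rho_0}$, so that $\langle M^j w, w\rangle \geq 0$ for all $j \geq 0$; and (iv) for the equality case, trace back: $E_{\mu^n}(f) = nE_\mu(f)$ forces $\langle M^j w, w\rangle = 0$ for $0 \le j \le n-2$, in particular $|w|^2 = \langle w, w\rangle = 0$ (the $j=0$ term), i.e. $-\Delta_1 f(e) = 0$, which is exactly the harmonicity condition from Proposition \ref{gradient}; conversely if $f$ is harmonic then $-\Delta_1 f = 0$, hence by \eqref{expression_for_Delta_n} $-\Delta_i f = 0$ for all $i$ and all the cross terms vanish, giving $E_{\mu^n}(f) = nE_\mu(f)$. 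The main obstacle is step (iii): establishing nonnegativity of the averaging operator. I expect this is handled by observing that symmetry and $\Gamma$-invariance of $\mu$ make $M$ conjugate (via the identification $\mathcal{M}_{\rho_0}\cong\mathcal{H}$) to a convex combination of orthogonal operators $\rho_0(\gamma)$, which need not be positive — so one really must use that $\mu$ is a \emph{symmetric} measure, i.e. that $M$ is realized as $P^*P$-like or, most directly, that $\langle M^j w, w\rangle$ for odd powers can be rewritten using self-adjointness as $\langle M^{(j-1)/2}(M^{1/2})^* \cdots\rangle$ only after knowing $M\ge0$; the fallback, if positivity of $M$ is not readily available, is to invoke instead the variance inequality \eqref{variance_iequality2} applied to the pushed-forward measures to show directly that each increment $E_{\mu^{k+1}}(f) - E_{\mu^k}(f) - E_\mu(f) \le 0$ in the Hilbert case, but with the signs arranged to telescope correctly — in any case a short self-contained argument suffices and I would present whichever is cleanest.
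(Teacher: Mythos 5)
Your reduction (steps (i), (ii) and the equality case in (iv)) matches the paper's strategy, but step (iii) contains a genuine gap: the averaging operator $M$ is \emph{not} nonnegative in general. On $\mathcal{M}_{\rho_0}\cong\mathcal{H}$ one has $M\varphi(e)=\sum_{\gamma}\mu(e,\gamma)\rho_0(\gamma)\varphi(e)$, a convex combination of orthogonal operators, so all one gets for free is self-adjointness and $\|M\|\leq 1$, i.e. $\mathrm{spec}(M)\subset[-1,1]$. The paper's own Example in \S\ref{example} already kills your claim: for $\Gamma=\Z$ with the standard walk and $\rho_0(k)=(-1)^k$ acting on $\R$, one gets $M=-\mathrm{Id}$, so $\langle Mw,w\rangle=-|w|^2<0$; your assertion that $\langle M^jw,w\rangle\geq 0$ for all $j$ fails for every odd $j$, and neither of your fallbacks (a "standard fact for lazy/symmetric walks" — this walk is symmetric but not lazy — or an appeal to \eqref{variance_iequality2}) supplies the missing positivity. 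Consequently your termwise argument for $\sum_{j=0}^{n-2}(n-1-j)\langle M^jw,w\rangle\geq 0$, and with it the forcing of $|w|^2=0$ in the equality case, does not go through as written.

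What the paper actually uses is only the nonnegativity of $I+M$, which does follow from $|\langle\rho_0(\gamma)\varphi(e),\varphi(e)\rangle|\leq|\varphi(e)|^2$. For even $i=2m+2$ it groups consecutive powers in \eqref{expression_for_Delta_n} in pairs and uses self-adjointness:
\begin{equation*}
\langle -\Delta_i f(e), -\Delta_1 f(e)\rangle=\sum_{k=0}^{m}\bigl\langle (I+M)M^{k}(-\Delta_1 f)(e),\,M^{k}(-\Delta_1 f)(e)\bigr\rangle\geq 0,
\end{equation*}
and for odd $i=2m+3$ it adds the extra term $|M^{m+1}(-\Delta_1 f)(e)|^2\geq 0$ to the even case; this gives $\langle -\Delta_i f(e), -\Delta_1 f(e)\rangle\geq 0$ for every $i$, which is all that is needed, and the equality case then follows from the $i=1$ term exactly as you intend. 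Your weighted-sum formulation could in principle be salvaged without termwise positivity, since by the spectral theorem the relevant polynomial satisfies $\sum_{j=0}^{n-2}(n-1-j)\lambda^{j}=\sum_{i=1}^{n-1}(1-\lambda^{i})/(1-\lambda)\geq 0$ on $[-1,1]$ — which is in substance the paper's pairing argument done spectrally — but as it stands your proof rests on the false statement $M\geq 0$ and needs this repair.
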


\begin{proof}
 According to Corollary \ref{n-step_ineq} and Remark \ref{remark_on_n-step_inequality}, 
it suffices to show $\langle -\Delta_i f(e), -\Delta_1 f(e) \rangle \geq 0$ for each $i$ in
 order to prove the inequality. 
 When $i=1$, this is obvious. 
 Suppose $i=2m+2$, $m\geq 0$. 
By \eqref{expression_for_Delta_n}, we get 
 \begin{eqnarray}\label{expansion}
  \langle -\Delta_i f(e), -\Delta_1 f(e) \rangle 
  &= &  \sum_{k=0}^m 
    \langle (M^{2k}+M^{2k+1})(-\Delta_1 f)(e), -\Delta_1 f(e)\rangle \\
  &= &  \sum_{k=0}^m 
    \langle (I+M)M^{k}(-\Delta_1 f)(e), M^{k}(-\Delta_1 f)(e)\rangle, \nonumber 
 \end{eqnarray}
 since $M$ is selfadjoint. 
Now for $\varphi \in \mathcal{M}_{\rho_0}$, 
 \begin{equation*}
  \langle (I+M)\varphi, \varphi \rangle_{\mathcal{M}_{\rho_0}} 
   =  \langle \varphi(e), \varphi(e)\rangle + \sum_{\gamma \in \Gamma}\mu(e,\gamma)
   \langle \rho_0(\gamma)\varphi(e), \varphi(e)\rangle  \geq 0,  
 \end{equation*}
where we have used $|\langle \rho_0(\gamma)\varphi(e), \varphi(e)\rangle| \leq |\varphi(e)|^2$
which holds since $\rho_0(\gamma)$ is orthogonal. 
Thus the operator $I+M$ is nonnegative,  
and applying this to \eqref{expansion}, we obtain 
 $\langle -\Delta_i f(e), -\Delta_1 f(e) \rangle \geq 0$. 

 Suppose $i=2m+3$, $m\geq 0$. Then (by \eqref{expression_for_Delta_n} again)
\begin{eqnarray*}
\lefteqn{\langle -\Delta_i f(e), -\Delta_1 f(e) \rangle} \\ 
&=&  \langle -\Delta_{i-1} f(e), -\Delta_1 f(e) \rangle 
+ \langle M^{m+1} (-\Delta_1 f(e)), M^{m+1}(-\Delta_1 f(e)) \rangle \\ 
&\geq&  0. 
\end{eqnarray*}

 Now suppose $E_{\mu^n}(f)=nE_{\mu}(f)$. Then 
 $\langle -\Delta_1 f(e), -\Delta_1 f(e) \rangle=0$, and hence $f$ is
 harmonic.  The converse is obvious. 
\end{proof}

\subsection{Converse of Theorem \ref{n-step}}\label{proposition_converse} 

The following proposition shows that an assertion slightly stronger than
the converse of Theorem \ref{n-step} holds for some $\cat$ spaces. 

\begin{Proposition}\label{converse_of_n-step}
 Let $Y$ be either a $\cat$ Riemannian manifold or an $\R$-tree, 
 and $\rho\colon \Gamma \longrightarrow \isom(Y)$  a homomorphism. 
 Suppose $\rho(\Gamma)$ admits a global fixed point.  Then there exists a
 positive constant $C_{\rho}$ such that 
 $E_{\mu^n}(f)\leq C_{\rho}E_{\mu}(f)$ for any $n \in \N$ and
 $\rho$-equivariant map 
 $f$. In particular, taking $n > C_{\rho}$, we obtain $n$, $\varepsilon$ 
 as in Theorem \ref{n-step}. 
\end{Proposition}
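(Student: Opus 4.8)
The plan is to use the global fixed point to bound $E_{\mu^n}(f)$ \emph{uniformly in $n$}, and then to dominate that bound by $E_\mu(f)$ through a coercivity estimate which holds precisely because $Y$ is finite-dimensional. Put $F=\mathrm{Fix}(\rho(\Gamma))=\bigcap_{\gamma\in\Gamma}\mathrm{Fix}(\rho(\gamma))$; this is a nonempty, closed, \emph{convex} (hence connected) subset of $Y$, fixed pointwise by $\rho(\Gamma)$. For a $\rho$-equivariant $f$ write $q=f(e)$, let $\bar q\in F$ be its nearest-point projection onto $F$, and set $R=d(q,F)=d(q,\bar q)$. Since $\rho(\gamma)\bar q=\bar q$ we get $d(f(e),f(\gamma))=d(q,\rho(\gamma)q)\le d(q,\bar q)+d(\rho(\gamma)\bar q,\rho(\gamma)q)=2R$ for every $\gamma$, whence
\[
E_{\mu^n}(f)=\tfrac12\sum_{\gamma\in\Gamma}\mu^n(e,\gamma)\,d(q,\rho(\gamma)q)^2\ \le\ 2R^2\qquad\text{for all }n .
\]
So it suffices to produce $c_\rho>0$, independent of $f$, with $R^2\le c_\rho E_\mu(f)$: then $E_{\mu^n}(f)\le 2c_\rho E_\mu(f)=:C_\rho E_\mu(f)$ for all $n$, and taking any integer $n>C_\rho$ with $\varepsilon=n-C_\rho$ supplies the data of Theorem~\ref{n-step}.

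For the coercivity, recall that by irreducibility and symmetry of $\mu$ the support $S$ of $\mu(e,\cdot)$ generates $\Gamma$, so $F=\bigcap_{s\in S}\mathrm{Fix}(\rho(s))$ and $E_\mu(f)\ge\tfrac12\big(\min_{s\in S}\mu(e,s)\big)\max_{s\in S}d(q,\rho(s)q)^2$; thus everything reduces to the key inequality $\max_{s\in S}d(q,\rho(s)q)\ge cR$ with $c>0$ depending only on $\rho$. Here the two cases diverge. If $Y$ is an $\R$-tree, the projection onto $F$ is $\rho(\gamma)$-equivariant, so each $\rho(s)q$ lies at distance $R$ from $\bar q$; were $[\bar q,\rho(s)q]$ to share a positive-length initial segment with $[\bar q,q]$ for \emph{every} $s\in S$, then the point of $[\bar q,q]$ at distance $\min_s(\text{overlap length})>0$ from $\bar q$ would be fixed by all $\rho(s)$, hence lie in $F$, contradicting the minimality of $d(q,F)$; so some $s_0\in S$ sends $q$ to the opposite side of $\bar q$ and $d(q,\rho(s_0)q)=2R$ (this even gives $C_\rho$ independent of $\rho$). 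If $Y$ is a $\cat$ Riemannian manifold, then near $\bar q$ the set $F$ is a totally geodesic submanifold and $[\bar q,q]$ meets it orthogonally, so its initial unit vector $\xi$ lies in the normal space $N=(T_{\bar q}F)^\perp\subset T_{\bar q}Y$; each $d\rho(s)_{\bar q}$ restricts to an orthogonal transformation of $N$, and the family $\{d\rho(s)_{\bar q}|_N\}_{s\in S}$ has no nonzero common fixed vector $v$ (such a $v$ would make $t\mapsto\exp_{\bar q}(tv)$ a geodesic of fixed points transverse to $F$). Since $q=\exp_{\bar q}(R\xi)$, $\rho(s)q=\exp_{\bar q}(R\,d\rho(s)_{\bar q}\xi)$, and the exponential map of a nonpositively curved manifold is distance nondecreasing,
\[
d(q,\rho(s)q)\ \ge\ R\,\big|\xi-d\rho(s)_{\bar q}\xi\big| ,
\]
and it remains only to bound $\max_{s\in S}\big|\xi-d\rho(s)_{\bar q}\xi\big|$ below by a positive constant over all unit $\xi\in N$; this is immediate since $\xi\mapsto\max_s\big|\xi-d\rho(s)_{\bar q}\xi\big|$ is continuous and strictly positive on the unit sphere of the \emph{finite-dimensional} space $N$.

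The delicate point — the only one needing care — is that $c$ (hence $c_\rho$, $C_\rho$) be uniform over all $f$, i.e.\ over all base points $\bar q\in F$. For $\R$-trees this is automatic ($c=2$). For Riemannian manifolds one uses that $F$ is connected: joining $\bar q$ to $\bar q'$ by a path in $F$ and transporting normal vectors along it, parallel transport commutes with each $\rho(s)$ (which fixes $F$ pointwise), so the representations $\{d\rho(s)_{\bar q}|_{N_{\bar q}}\}$ and $\{d\rho(s)_{\bar q'}|_{N_{\bar q'}}\}$ are orthogonally conjugate and the minimum of $\max_s|\xi-d\rho(s)_{\bar q}\xi|$ over the unit sphere is independent of $\bar q$ — this is the desired $c$. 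Everything else is routine: the triangle-inequality bound $E_{\mu^n}(f)\le 2R^2$, the one-generator lower bound for $E_\mu(f)$, and the final choice of $n,\varepsilon$. Note that no lower curvature bound on $Y$ is needed: finite-dimensionality enters \emph{only} through the compactness of a unit sphere, and, as the Hilbertian computation behind Corollary~\ref{n-step_ineq2_affine} shows, it cannot be dropped — for infinite-dimensional $Y$ one would need a spectral gap for $\rho$, strictly stronger than the mere existence of a fixed point.
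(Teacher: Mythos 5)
Your proof is correct and follows essentially the same route as the paper: the uniform bound $E_{\mu^n}(f)\le 2R^2$ from the fixed-point set, a displacement gap at the projection point obtained from compactness of the unit normal sphere (Riemannian case) or the segment-overlap argument forcing angle $\pi$ (tree case), and parallel transport along $F$ to make the constant independent of the base point. The only cosmetic difference is that you measure the gap by $|\xi-d\rho(s)_{\bar q}\xi|$ via the expanding exponential map, where the paper uses the angle $\kappa$ and the chord estimate $2R\sin(\kappa/2)$ — these are equivalent.
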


\begin{proof}
 Let $f\colon \Gamma \longrightarrow Y$ be a $\rho$-equivariant map. Denote by
 $F$ the fixed-point set of $\rho(\Gamma)$, and let
 $p_0 \in F$ be the nearest point from $f(e)$. Since $f(e) \in F$
 implies $E_{\mu}(f)=E_{\mu^n}(f)=0$, we may assume $f(e) \not\in F$. 
 Set $R=d(f(e),p_0)=d(f(e),F)$.  Since, for any
 $\gamma \in \Gamma$, $d(f(\gamma),p_0)=R$, 
 and hence $d(f(e), f(\gamma))\leq 2R$, 
 we see that
 $E_{\mu^n}(f)\leq 2R^2$ for any $n \in \N$. 
 Let $S=\{s \in \Gamma \mid \mu(e,s) \not= 0\}$. 
 Suppose that there exists a positive constant $\kappa$ such that 
 $\max\{\angle_{p_0}(f(e),f(s)) \mid s \in S\}\geq \kappa$ holds for
 any $\rho$-equivariant map $f$. 
 Then, for any $\rho$-equivariant map $f$, we have 
 $\max\{d(f(e),f(s))\mid s \in S\}\geq 2R \sin (\kappa/2)$. 
 This implies
 \begin{equation*}
  E_{\mu}(f) \geq 
  2R^2 \sin^2 \frac{\kappa}{2} \min_{s \in S}\mu(e,s), 
 \end{equation*}             
 and we can take
 \begin{equation*}
  C_{\rho}= 
  \left(\sin^2 \frac{\kappa}{2} \min_{s \in S}\mu(e,s)\right)^{-1}. 
 \end{equation*} 

 Suppose $Y$ is a Riemannian manifold. Note that $f(e)$ lies in a 
 geodesic starting from $p_0$ which is normal to $F$, and $p_0$ depends on $f$.
 So take any point $p \in F$, and set
\begin{equation*}
 \kappa_p = \inf_{V \in T_pF^{\perp}, |V|=1}
            \max \{\angle_p(V,\rho(s)_{*}V)\mid s \in S\},
\end{equation*}
 where $\rho(s)_{*}$ denotes the differential of $\rho(s)$, which
 induces an isometry on $T_pF^{\perp}$. 
 Note that $\kappa_p$ is positive. In fact, 
 since $T_pF^{\perp}$ is finite-dimensional, $\kappa_p=0$ implies the 
 existence of a unit vector $V \in T_pF^{\perp}$ fixed by $\rho(s)_{*}$ for any $s\in S$, 
and hence fixed by $\rho(\gamma)_{*}$ for any $\gamma\in \Gamma$ since $S$ generates $\Gamma$ 
by the $\Gamma$-invariance and the irreducibility of $\mu$. 
 Then the geodesic $\exp tV$ must be fixed by
 $\rho(\Gamma)$.  This contradicts the definition of $F$, since $V$ is
 normal to $F$. Let $q \in F$
 be another point in $F$. 
 Let $c\colon [0,1]\longrightarrow Y$ be the
 unique geodesic starting from $p$ and terminating at $q$, and 
 $P_t \colon T_pY \longrightarrow T_{c(t)}Y$ the
 parallel translation along $c$. 
 Note that $c$ must lie in $F$, and hence it is fixed by $\rho(\Gamma)$. 
 Therefore, for any $V\in T_pV$, $t \mapsto \rho(s)_{*}P_t(V)$ is a parallel vector field
 along $c$ with initial vector $\rho(s)_{*}P_0(V)=\rho(s)_{*}V$. By the
 uniqueness of a parallel vector field 
 with a given initial condition,
 we see
 that $\rho(s)_{*}P_1(V)=P_1(\rho(s)_{*}V)$, namely $\rho(s)_{*}$
 commutes with $P_1$. Thus the action of $\rho(s)$ on $T_qY$ is
 conjugate to that on $T_pY$ by $P_1$. In particular, 
 $\kappa_p = \kappa_q$, that is, $\kappa_p$ does not depend on the
 choice of $p \in F$.  Hence we can take $\kappa$ above to be
 $\kappa_p$.  

 Now suppose $Y$ is an $\R$-tree, and let $f$, $F$, $p_0$ and $R$ be as
 above. Since $Y$ is an $\R$-tree, the angle 
 $\angle_{p_0}(f(e),f(s))$ equals either $0$ or $\pi$.  
 Suppose there exists $s \in S$ such that 
 $[p_0,f(e)] \cap [p_0,f(s)]=\{p_0\}$, where $[p_0,q]$ denotes the
 geodesic segment joining $p_0$ and $q$.  
 Then $[p_0,f(e)] \cup [p_0,f(s)]$ is an arc (a topological segment)
 joining $f(e)$ and $f(s)$, 
 which must be unique in $Y$ by the definition of $\R$-tree. 
 In other words,
 $[p_0,f(e)] \cup [p_0,f(s)]$ is a geodesic segment joining $f(e)$ and
 $f(s)$. Therefore  $\angle_{p_0}(f(e),f(s))=\pi$.  Now assume the
 contrary: 
 $[p_0,f(e)] \cap [p_0,f(s)]\not= \{p_0\}$ for all $s \in S$. 
 Let $c_s\colon [0,R]\longrightarrow Y$, $s \in S$,  and 
 $c_e\colon [0,R]\longrightarrow Y$ be unit speed geodesics starting from $p_0$ 
 and terminating at $f(s)$ and $f(e)$ respectively.  By our assumption, 
 there exists a positive constant $T_s$ for each $s \in S$ such that 
 $c_s([0,T_s]) \subset c_e([0,R])$. 
 Since the geodesics are of unit 
 speed, this means $c_s|_{[0,T_s]}=c_e|_{[0,T_s]}$ for each $s \in S$. 
 Because $S$ is a finite set, we get a positive constant
 $T:= \min\{T_s \mid s \in S\}$.  By the definition of $T$, 
 $c_e|_{[0,T]}=c_s|_{[0,T]}$ for all $s \in S$ and 
 $c_e([0,T])\not= \{p_0\}$.
 It is clear that $c_e([0,T])$ must be fixed
 by $\rho(s)$ for all $s \in S$, and hence by $\rho(\Gamma)$.  
 This means that there is a fixed point $p =c(T)$ of 
 $\rho(\Gamma)$ which is closer to $f(e)$ than $p_0$. This contradicts
 our choice of $p_0$. 
 Therefore, for any $\rho$-equivariant map $f$, 
 $\max\{\angle_{p_0}(f(e),f(s)) \mid s \in S\}$ must be equal to
 $\pi$, and we can take $\kappa$ to be $\pi$. This completes the proof.  
\end{proof}

\begin{Remark}
 From the proof, one sees that $C_{\rho}$ for an $\R$-tree equals 
 $(\min_{s\in S} \mu(e,s))^{-1}$ and does not depend on $\rho$. 
 It is plausible that Proposition \ref{converse_of_n-step} is also true
 for Euclidean buildings. 
\end{Remark}

\section{Fixed-point property of random groups}

In this section, we will prove that a random group of Gromov's graph model 
associated with a sequence of expanders satisfying some additional conditions  
has fixed-point property for a certain large class of $\cat$ spaces. 

\subsection{Preliminaries on graphs}\label{finite_graph}
Let $G = (V, E)$ be a finite connected graph, where $V$ and $E$ are the sets of vertices 
and undirected edges, respectively. 
We denote the set of directed edges by $\overrightarrow{E}$. 
Let $\mu_G$ and $\nu_G$ denote the standard random walk on $G$ and the standard 
probability measure on $V$ given by 
$$
\mu_G(u,v)=\left\{\begin{array}{cl} \frac{1}{\deg(u)} & \mbox{if $\{u, v\} \in E$,}\\
0 & \mbox{otherwise,} \end{array} \right.\quad \mbox{and}\quad 
\nu_G(u)=\frac{\deg(u)}{2|E|}, 
$$
respectively. 
Note that $\mu_G$ is symmetric with respect to $\nu_G$: $\nu_G(u) \mu_G(u,v) = \nu_G(v) \mu_G(v,u)$. 
The {\em discrete Laplacian} $\Delta_G$ of $G$, acting on real-valued functions $\varphi$ on $V$, is defined by 
$$
(\Delta_G \varphi) (u) = \varphi(u) - \sum_{v\in V} \mu(u,v) \varphi(v),\quad u\in V.  
$$
Let $\lambda_1(G,\R)$ denote the second eigenvalue of $\Delta_G$. 
It is characterized variationally as 
$$
\lambda_1(G, \R) = \inf_\varphi \frac{\frac{1}{2} \sum_{u\in V} \nu_G(u) \sum_{v\in V}\mu_G(u,v) (\varphi(u)-\varphi(v))^2}{
\sum_{u\in V} \nu_G(u) (\varphi(u)- \overline{\varphi})^2},  
$$
where $\varphi$ is a nonconstant real-valued function on $V$, and $\overline{\varphi}$ denotes 
the average of $\varphi$, given by
$\overline{\varphi} = \left[ \sum_{u\in V} {\rm deg}(u) \varphi(u) \right] / \left[\sum_{u\in V} {\rm deg}(u) \right]$. 
The {\em girth} of $G$, denoted by $\girth(G)$, is the minimal length of a cycle (i.e.~a closed path) in $G$, 
and the {\em diameter} of $G$, denoted by $\diam(G)$, is the maximum distance between a pair of points in $G$. 

Let $\{ G_l = (V_l, E_l) \}_{l\in L}$ be a sequence of finite connected graphs with $L$ an unbounded set 
of positive integers and $|V_l|\to \infty$ as $l\to\infty$. 
We say that $\{ G_l  \}_{l\in L}$ is a {\em sequence of (bounded degree) expanders} if  it satisfies 
the following conditions for some positive integer $d_0$ and positive real number $\mu_0$: 
\begin{enumerate}
\renewcommand{\theenumi}{\roman{enumi}}
\renewcommand{\labelenumi}{(\theenumi)} 
\item $2\leq \deg(u)\leq d_0$ for all $l\in L$ and all $u\in V_l$, 
\item $\lambda_1(G_l, \R)\geq \mu_0$ for all $l\in L$. 
\end{enumerate}

\subsection{Graph-model random groups and their hyperbolicity}
We first recall the formulation of Gromov's graph model of random groups \cite{Gromov3}, \cite{Ollivier}. 
Let $\Gamma = F_k$ be the free group generated by $S = \{s_1^\pm,\dots,s_k^\pm\}$. 
Let $G = (V, E)$ be a finite connected graph, and we use the notations as in the previous subsection. 
A map $\alpha\colon \overrightarrow{E}\longrightarrow S$ satisfying $\alpha((u, v)) = \alpha((v, u))^{-1}$ 
for all $(u,v)\in \overrightarrow{E}$ is called an {\em $S$-labelling} of $G$. 
For such an $\alpha$ and a path $\overrightarrow{p} = (\overrightarrow{e}_1,\dots, \overrightarrow{e}_l)$ 
in $G$, where $\overrightarrow{e}_i \in \overrightarrow{E}$, define $\alpha(\overrightarrow{p}) 
= \alpha(\overrightarrow{e}_1)\cdot \dots \cdot \alpha(\overrightarrow{e}_l)\in \Gamma$. 
Then set $R_\alpha = \{ \alpha(\overrightarrow{c}) \mid \mbox{$\overrightarrow{c}$ is a cycle in $G$} \}$ 
and $\Gamma_{\alpha}=\Gamma/\overline{R_{\alpha}}$, where $\overline{R_{\alpha}}$ is the normal 
closure of $R_\alpha$. 
Let $\Lambda(G, k)$ denote the set of all $S$-labellings of $G$, consisting of $(2k)^{|E|}$ 
elements, and make it into a probability space by putting a uniform probability measure on it. 
When $|V|\to \infty$, the group $\Gamma_\alpha$ for a randomly and uniformly chosen $\alpha\in \Lambda(G, k)$ 
is a `random group'. 

To be precise, choose a sequence of finite connected graphs $\{ G_l = (V_l, E_l) \}_{l\in L}$ 
with $L$ an unbounded set of positive integers and $|V_l|\to \infty$ as $l\to\infty$. 
Given a group property P (e.g.~Kazhdan's property (T)), 
we say that a {\em random group has property P} if the probability of $\Gamma_\alpha$ having property P 
goes to one as $l\to \infty$, that is, if 
$|\{\alpha\in \Lambda(G_l, k) \mid \mbox{$\Gamma_\alpha$ has property P}\}| / |\Lambda(G_l, k)| \to 1$ 
as $l\to \infty$. 
In actual use, we primalily assume that $\{ G_l \}_{l\in L}$ is a sequence of expanders. 
In what follows, we make precise what kind of properties the expanders 
should have further, in order that the corresponding graph model is useful for our purpose. 

We begin with the specific example of expanders which was discovered by
Lubotzky, Phillips and Sarnak \cite{Lubotzky-Phillips-Sarnak}. 

\Example 
Let $p$ and $q$ be distinct primes which are congruent to $1$ modulo $4$. 
The {\em LPS expanders} $X^{p,q}$ are $(p+1)$-regular Cayley graphs of the group 
$\mathrm{PSL}(2, \F_q)$ if the Legendre symbol $\displaystyle \left(\frac{p}{q}\right) = 1$ 
and of $\mathrm{PGL}(2, \F_q)$ if $\displaystyle \left(\frac{p}{q}\right) = -1$, where
$\F_q$ is a finite field with $q$ elements ($\cong  \Z/q\Z$). 
They are so-called Ramanujan graphs, and also satisfy some other extremal 
combinatorial properties: \\
Case i. $\displaystyle \left(\frac{p}{q}\right) = -1$; $X^{p,q}$ is bipartite 
of order $n=|X^{p,q}| = q(q^2-1)$, 
\begin{enumerate} 
\renewcommand{\labelenumi}{(\alph{enumi})} 
\item $\girth(X^{p,q}) \geq 4\log_p q - \log_p 4$, \\
\item ${\rm diam}(X^{p,q}) \leq 2\log_p n + 2\log_p 2 + 1$.
\end{enumerate}
Case ii. $\displaystyle \left(\frac{p}{q}\right) = 1$; $n=|X^{p,q}| = q(q^2-1)/2$ 
and $X^{p,q}$ is not bipartite,  
\begin{enumerate} 
\renewcommand{\labelenumi}{(\alph{enumi})} 
\item $\girth(X^{p,q}) \geq 2\log_p q$, \\
\item ${\rm diam}(X^{p,q}) \leq 2\log_p n + 2\log_p 2 + 1$. 
\end{enumerate}
Let us introduce a new parameter $l= [\log_p q]$, where $p$ is fixed and $q$ varies, 
and set $G_l = X^{p,q}$. 
(Note that the map $q\mapsto l$ is not one-to-one. 
So for each $l$ we choose a single $q$ among those mapped to $l$.) 
Then in the both cases, the conditions (a), (b) are rewritten as 
$$
\girth(G_l) \geq {\rm const}_1 \cdot l\quad \mbox{and}\quad 
{\rm diam}(G_l) \leq {\rm const}_2 \cdot l 
$$
respectively. 
Note that one can choose ${\rm const}_1 = 2$ and ${\rm const}_2 = 6+ o_l(1)$.

\medskip
With this example as a model, we consider a sequence of finite connected graphs 
$\{ G_l = (V_l, E_l) \}_{l\in L}$ with $L$ an unbounded set of positive integers 
satisfying the following conditions for some positive integer $d_0$ and positive real number $\mu_0$: 
\begin{enumerate}
\renewcommand{\theenumi}{\roman{enumi}}
\renewcommand{\labelenumi}{(\theenumi)} 
\item $3\leq \deg(u)\leq d_0$ for all $l\in L$ and all $u\in V_l$, 
\item $\girth(G_l)\geq l$ and ${\rm diam}(G_l)\leq {\rm const}\cdot l$ for all $l\in L$, 
\item $\lambda_1(G_l, \R)\geq \mu_0$ for all $l\in L$. 
\end{enumerate}
For a fixed positive integer $j$, we also consider the graph $G_l^{(j)}$ obtained 
from $G_l$ by subdividing every edge of $G_l$ into $j$ edges by adding $j-1$ vertices. 
Set $l' = jl$ so that $l'$ varies over $jL$. 
Then the sequence of graphs $\{G_{l'/j}^{(j)}\}_{l'\in jL}$ satisfies the following conditions: 
\begin{enumerate}
\renewcommand{\theenumi}{\roman{enumi}}
\renewcommand{\labelenumi}{(\theenumi$\mbox{}'$)} 
\item $2\leq \deg(u)\leq d_0$ for all $l'\in jL$ and all $u\in V(G_{l'/j}^{(j)})$ , 
\item $\girth(G_{l'/j}^{(j)})\geq l'$ and ${\rm diam}(G_{l'/j}^{(j)})\leq {\rm const}\cdot l'$ 
for all $l'\in jL$, 
\item $\lambda_1(G_{l'/j}^{(j)}, \R)\geq c(\mu_0, j)>0$ for all $l'\in jL$.  
\end{enumerate}
(For (iii$\mbox{}'$), see \cite{Silberman}.) 
Moreover, if an arbitrary $\beta>1$ is given, then by choosing $j$ large enough, 
we can arrange so that $\{ G_{l'/j}^{(j)}  \}_{l'\in jL}$ satisfies \\
\quad (iv$\mbox{}'$)\,\, The number of embedded paths in $G_{l'/j}^{(j)}$ of length less than
$\frac{l'}{2}$ is less than \\
\phantom{\quad (iv)\,\,} ${\rm const}\cdot \beta^{l'/2}$. \\
(For this point, we refer the reader to \cite[p.~17]{Ghys}.) 

Henceforth, we will fix a sequence of finite connected graphs $\{ G_l \}_{l\in L}$ 
satisfying the conditions (i)-(iii). 
We will also fix a sufficiently large $j$, and consider the graph model of random groups 
associated with the sequence of graphs $\{G_{l'/j}^{(j)}\}_{l'\in jL}$. 
The fact that a random group of this model is an infinite group follows from 
the following theorem due to Gromov \cite{Gromov3} (see also \cite{Ghys}). 

\begin{Theorem}\label{hyperbolicity_theorem}
Let $\{ G_l = (V_l, E_l) \}_{l\in L}$ be a sequence of finite connected graphs with $L$ 
an unbounded set of positive integers. 
Suppose that $\{ G_l \}_{l\in L}$ satisfies the following conditions
for some positive integer $d_0$ and a choice of $\beta>1$ sufficiently close to $1${\rm :}  
\begin{enumerate} 
\renewcommand{\theenumi}{\roman{enumi}} 
\renewcommand{\labelenumi}{(\theenumi)} 
\item $2\leq \deg(u)\leq d_0$ for all $l\in L$ and all $u\in V_l$,  
\item $\girth(G_l)\geq l$ and ${\rm diam}(G_l)\leq {\rm const}\cdot l$ for all $l\in L$, 
\item the number of embedded paths in $G_l$ of length less than $\frac{l}{2}$ 
is less than ${\rm const}\cdot \beta^{l/2}$. 
\end{enumerate} 
Then a random group of the graph model associated with $\{ G_l \}_{l\in L}$ 
is non-elementary hyperbolic; in particular, it is an infinite group. 
\end{Theorem}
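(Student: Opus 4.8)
The plan is to show that, with probability tending to $1$ as $l\to\infty$, the presentation $\langle S\mid R_\alpha\rangle$ of $\Gamma_\alpha$ satisfies a geometric small-cancellation condition strong enough to force a linear isoperimetric inequality for van Kampen diagrams, whence hyperbolicity (we follow the method of \cite{Gromov3}, cf.\ \cite{Ghys}); non-elementarity --- and a fortiori infiniteness --- then follows by standard arguments from the strictness of that condition together with $|S|=2k\ge 4$, every non-elementary hyperbolic group being infinite and containing a non-abelian free subgroup. Fix once and for all a small universal constant $\lambda>0$ of small-cancellation type (any $\lambda<1/6$ will do for a Dehn-algorithm argument; a weaker $\lambda$ suffices if one prefers Gromov's local-to-global criterion for hyperbolicity). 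The target estimate is: with probability $\to1$, every reduced van Kampen diagram $D$ over $\langle S\mid R_\alpha\rangle$ has the property that any two of its $2$-cells meet along a boundary arc whose $\alpha$-label, after free reduction, has length at most $\lambda\,\girth(G_l)$, hence at most $\lambda$ times the length of either relator --- here the hypothesis $\girth(G_l)\ge l$ is exactly what makes the relators ``long''.

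The geometric input is the dictionary between van Kampen diagrams over this presentation and labelled planar complexes equipped with a combinatorial, label-preserving map to $G_l$: each $2$-cell goes to a cycle of $G_l$, and each interior edge forces the two adjacent cycles to carry the same $S$-label along the shared arc, so a ``piece'' is a path in $G_l$ read in two essentially different ways. Two reductions bring this into the range controlled probabilistically. First, one replaces an arbitrary common subpath by an \emph{embedded} representative in $G_l$: a common subpath longer than $\lambda\,l$ either contains an embedded subpath of length $\gtrsim\lambda\,l$, or else wraps around a cycle of length $<l$, which $\girth(G_l)\ge l$ forbids. Second, free cancellation inside the words $\alpha(\overrightarrow{p})$ must be controlled: by the standard fact that a uniformly random reduced word of length $m$ over $S$ has reduced length $\ge c\,m$ with probability $1-e^{-c'm}$, one gets that with probability $\to1$ every relator has reduced length $\gtrsim\girth(G_l)$, and a similar estimate keeps the reduced length of a generic piece comparable to its combinatorial length. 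Granting that all pieces are short relative to the long relators, the combinatorial Gauss--Bonnet / small-cancellation argument applied to $D$ yields $|\partial D|\ge c(\lambda)\,l\cdot\mathrm{Area}(D)$, a linear isoperimetric inequality; alternatively, Gromov's local-to-global criterion reduces hyperbolicity to diagrams of size bounded in terms of $d_0$ and $k$, the bound $\diam(G_l)\le\mathrm{const}\cdot l$ being used to keep all relevant configurations at scale $\sim l$.

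The probabilistic heart is a union bound. The event to exclude is that $G_l$ carries two essentially distinct paths of length $\ge\lambda\,l$ with the same $\alpha$-label; after the reductions above it suffices to treat embedded, edge-disjoint pairs, and for a fixed such pair of common length $m$ the probability of a common label is exactly $(2k)^{-m}$, with $m\ge\lambda\,l/C$ for a combinatorial constant $C$ absorbing the passage to embedded, edge-disjoint subpaths. By hypothesis (iii) the number of embedded paths of length $<l/2$ in $G_l$ is at most $\mathrm{const}\cdot\beta^{l/2}$, so the number of pairs of such paths is at most $\mathrm{const}\cdot\beta^{l}$, and the union bound gives a failure probability at most $\mathrm{const}\cdot\bigl(\beta\,(2k)^{-\lambda/C}\bigr)^{l}$. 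Since $(2k)^{\lambda/C}>1$, choosing $\beta>1$ close enough to $1$ --- how close depending only on $k$ and the universal constants $\lambda,C$ --- makes the base $<1$, so this probability tends to $0$ as $l\to\infty$. This is precisely where the freedom to shrink $\beta$ toward $1$ in hypothesis (iii) is used.

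I expect the main obstacle to be the geometric translation rather than the counting: making precise, and carrying out cleanly, the implication ``the random labelling is injective on long embedded paths $\Rightarrow$ reduced van Kampen diagrams have all pieces short'', while simultaneously handling free reduction in $F_k$, non-embedded common subpaths, the reducedness of diagrams, and the bookkeeping of relator versus piece lengths at scale $l$, and then invoking the small-cancellation isoperimetric inequality (or Gromov's local-to-global principle) with constants depending only on $d_0$ and $k$. The counting step, by contrast, is a routine union bound once the list of bad configurations has been correctly drawn up.
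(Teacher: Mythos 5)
The paper does not prove this theorem at all: it is quoted as Gromov's result \cite{Gromov3}, with Ghys's Bourbaki expos\'e \cite{Ghys} as the cited source of a detailed treatment, and the rest of the paper only uses its statement. So there is no internal proof to compare against; your outline is, in spirit, exactly the route of the cited references (graphical small cancellation at scale $\girth(G_l)$, a union bound over pairs of embedded paths using hypothesis (iii), and the freedom to take $\beta$ close to $1$ to beat the $(2k)^{-\lambda l/C}$ gain). In that sense you have reconstructed the right strategy rather than an alternative one.

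That said, as a proof your text has genuine gaps beyond the one you flag. First, the probabilistic step is only clean for edge-disjoint pairs: for two distinct paths that overlap, the event of equal labels is not independent coordinate-by-coordinate and its probability is not $(2k)^{-m}$; the ``combinatorial constant $C$ absorbing the passage to embedded, edge-disjoint subpaths'' is precisely where same-label overlapping paths (which force periodicity of the labelling along the overlap) must be analyzed, and this is one of the delicate points in \cite{Gromov3}, \cite{Ghys}. Second, the relator set $R_\alpha$ consists of the labels of \emph{all} cycles of $G_l$, of unbounded length and not freely reduced, so the classical $C'(\lambda)$ bookkeeping ``pieces $\le\lambda\cdot$ relator length'' does not apply as stated; one needs the graphical small cancellation formalism (reduced diagrams whose $2$-cells map to $G_l$, with pieces measured against $\girth(G_l)$), and the free-reduction control you invoke is a statement about random \emph{unreduced} words (your phrase ``uniformly random reduced word'' is off) and itself requires a union bound over paths, again using (ii)--(iii); the hypothesis $\diam(G_l)\le\mathrm{const}\cdot l$ enters exactly at the local-to-global step you only gesture at. Third, non-elementarity is asserted, not proved: it does not follow formally from $|S|=2k\ge 4$ plus a small cancellation condition (a priori the quotient could collapse or be elementary), and in the cited treatments it is obtained by a separate argument. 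None of these points suggests your plan is wrong---it is the standard one---but each is a substantive piece of the proof that the sketch leaves open.
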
 

\subsection{Fixed-point theorem} 

We first recall (see \cite{Wang}) 
\begin{Definition} 
For a finite connected graph $G$ and a $\cat$ space $T$, the {\em Wang invariant} 
$\lambda_1(G,T)$ is defined by $\lambda_1(G,T) = \inf \mathrm{RQ}(\varphi)$, 
where the infimum is taken over all nonconstant maps $\varphi\colon V\longrightarrow T$, and 
\begin{equation}\label{RQ} 
\mathrm{RQ}(\varphi) = \frac{\frac{1}{2}\sum_{u \in V}\nu_G(u)\sum_{v \in V}\mu_G(u,v)d_T(\varphi(u),\varphi(v))^2}{
\sum_{u \in V} \nu_G(u) d_T(\varphi(u), \mathrm{bar}(\varphi_*\nu_G))^2}. 
\end{equation} 
\end{Definition}

\begin{Theorem}\label{fixed_point_theorem} 
Given positive integers $k, d_0$ and positive real number $\lambda_0$, there exists $g_0 = g_0(\lambda_0)$ 
such that if $G=(V,E)$ is a finite connected graph and $\mathcal{Y}$ is a family of $\cat$ spaces satisfying  
\begin{enumerate}
\renewcommand{\theenumi}{\roman{enumi}}
\renewcommand{\labelenumi}{(\theenumi)}
\item $2\leq \deg(u)\leq d_0$ for all $u\in V$, 
\item $\girth(G)\geq g_0$, 
\item $\lambda_1(G, TC_pY) \geq \lambda_0$ for all $Y\in \mathcal{Y}$ and all $p\in Y$, 
\end{enumerate}
then with probability at least $1-a_1 e^{-a_2|V|}$, where $a_1 = a_1(k,\lambda_0)$ and 
$a_2 = a_2(k, d_0, \lambda_0)$, $\Gamma_\alpha$ has property F$\mathcal{Y}$.  
\end{Theorem}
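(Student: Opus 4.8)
The plan is to reduce the fixed-point property for $\mathcal{Y}$ to the $n$-step energy criterion of Corollary \ref{l-step}, applied to the random walk $\mu$ on $\Gamma_\alpha$ determined by the labelled graph. First I would set up the combinatorial framework: for a random labelling $\alpha \in \Lambda(G,k)$, the group $\Gamma_\alpha$ carries a natural finitely supported symmetric random walk $\mu_\alpha$ coming from the graph $G$ via the labelling, and equivariant maps $f\colon \Gamma_\alpha \to Y$ correspond (after restricting to the vertex set, using the labelling to pull back a based loop structure) to maps $\varphi\colon V \to Y$ whose energy and $n$-step energy are controlled by the Wang-type quotient $\mathrm{RQ}(\varphi)$ computed on $G$ and on its iterates. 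The key identity to establish is that, up to the small corrections coming from the difference between $\Gamma_\alpha$-geodesics and $G$-paths, one has $E_{\mu_\alpha^{\,n}}(f) \lesssim E_{\mu_{G^{(n)}}}(\varphi)$ where $G^{(n)}$ is the $n$-step graph, while $E_{\mu_\alpha}(f) \gtrsim E_{\mu_G}(\varphi)$, so that the spectral-gap hypothesis (iii), $\lambda_1(G, TC_pY) \geq \lambda_0$, forces the ratio $E_{\mu_\alpha^{\,n}}(f)/E_{\mu_\alpha}(f)$ to stay below $n-\varepsilon$ for a suitable $n$ and $\varepsilon = \varepsilon(\lambda_0)$.

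Next I would carry out the probabilistic estimate that makes these comparisons legitimate with the stated probability. The point is that for a random labelling, with overwhelming probability the labelling is ``locally injective'' up to scale $g_0/2$: any reduced word of length $<g_0/2$ that is a relator must come from a genuine cycle in $G$, and since $\girth(G)\geq g_0$ there are no short cycles. Concretely, one fixes a map $\varphi\colon V \to TC_pY$ realizing the Wang invariant nearly optimally, transplants it to an equivariant map on $\Gamma_\alpha$, and shows that on the ball of radius $g_0/2$ in $\Gamma_\alpha$ the walk behaves like the walk on the universal cover tree of $G$, so the relevant energies are genuinely comparable. This is exactly where one needs the girth to be large (hypothesis (ii) with $g_0$ chosen after $\lambda_0$): the number of short embedded loops is controlled, a union bound over these (there are at most polynomially many in $|V|$, each failing to be a relator with probability $1 - (2k)^{-1}$ roughly) gives the bound $1 - a_1 e^{-a_2|V|}$ after passing through a concentration argument. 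Following Silberman, I would replace his large-deviation input by the central limit theorem for the number of steps of the walk, which is what allows degree-two vertices (hypothesis (i) permits $\deg = 2$) and keeps the constants $a_1(k,\lambda_0)$, $a_2(k,d_0,\lambda_0)$ in the claimed form.

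The main obstacle, and the step I expect to require the most care, is the passage from the spectral gap at the level of the \emph{finite graph} to the $n$-step energy inequality \eqref{assumption_on_n-step_energy} at the level of the \emph{group}: one must show that the error terms arising from the nonlinearity of the tangent cones and from the curvature of $Y$ (the very terms flagged in Remark \ref{Hilbert_case} and Remark \ref{remark_on_n-step_inequality}) do not destroy the inequality. Here the device is to work with the tangent cones $TC_pY$ rather than $Y$ itself: by Corollary \ref{n-step_ineq} the defect in the $n$-step lower bound is measured by inner products $\langle -\Delta_i f(e), -\Delta_1 f(e)\rangle$ in a tangent cone, and the hypothesis $\lambda_1(G, TC_pY) \geq \lambda_0$ gives a lower bound on $E_\mu(f)$ in terms of $|-\Delta_\mu f(e)|$ computed in that cone, which feeds back into Corollary \ref{l-step} to produce the constant $C = C(\lambda_0)$. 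The delicate point is that $g_0$ must be chosen large enough, depending on $\lambda_0$ alone, that the ``escape'' of the walk past the radius where the group and the graph agree contributes an error smaller than the gap $\varepsilon$; this is a quantitative balancing that uses the diameter-to-girth type control implicitly and the finiteness of the support of $\mu$. Once this comparison is in place, Corollary \ref{l-step} applied to every $\rho\colon \Gamma_\alpha \to \isom(Y)$ and every $Y \in \mathcal{Y}$ yields a global fixed point, i.e.\ property F$\mathcal{Y}$, on the event of the stated probability.
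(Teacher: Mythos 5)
Your overall route---reduce to the $l$-step criterion of Corollary \ref{l-step}, feed the spectral hypothesis (iii) into an energy comparison coming from the graph, use the girth so that balls of radius $g_0/2$ in $G$ are trees, and run Silberman-type concentration over labellings with the central limit theorem replacing his large-deviation input---is indeed the paper's route. But the step you identify as the main obstacle is not the real difficulty, and your description of how (iii) enters is wrong. The curvature/nonlinearity error terms flagged in Remark \ref{Hilbert_case} are harmless for this argument: they can only push $E_{\mu^n}(f)$ upward, and the criterion uses only the lower bound \eqref{n-step_ineq3}, which Section 2 establishes for every $\cat$ target. Hypothesis (iii) is used for something else entirely: via the elementary comparison $\lambda_1(G,Y)\geq \inf_{p\in Y}\lambda_1(G,TC_pY)$ (see \cite{Wang}) it gives $\lambda_1(G,Y)\geq\lambda_0$ for $Y$ itself, and it is this Wang invariant of $Y$ (not of a tangent cone) that enters the spectral-gap inequality $E_{\mu_G^n}(\varphi)\leq \frac{2}{\lambda_1(G,Y)}E_{\mu_G}(\varphi)$ of Lemma \ref{spectral_gap}. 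Your claim that the tangent-cone hypothesis ``gives a lower bound on $E_\mu(f)$ in terms of $|-\Delta_\mu f(e)|$'' is backwards: the inequality $|-\Delta_\mu f(e)|^2\geq C' E_\mu(f)$ is the \emph{conclusion} one extracts from the $l$-step upper bound combined with \eqref{n-step_ineq3}, via Proposition \ref{gradient}.

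The genuine gap is in what you call the ``key identity.'' The walk on the group induced by the labelled graph is the pushforward $\overline{\mu}^{\,n}_{\Gamma,\alpha}$ of the $n$-step graph walk, and in general this is \emph{not} the $n$-th convolution power of the one-step pushforward (the labels available at each step depend on the walker's position in $G$), so the machinery of Section 2---which concerns convolution powers $\mu^l$ of one fixed symmetric walk---cannot be applied to your $\mu_\alpha$ as proposed. Worse, because of backtracking the pushforward spreads its mass over words of all lengths $l\leq n$, so an upper bound on its energy says nothing by itself about any single $E_{\mu_\Gamma^{l}}(f)$ with $l$ large, and the criterion needs $l$ large (one must beat $C/\lambda_0$). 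The missing mechanism is the paper's Lemma \ref{weighted_sum}: using $\girth(G)\geq 2n$ so that $n$-balls are trees, the \emph{expected} pushforward walk decomposes as $\overline{\mu}^{\,n}_{\Gamma,G}=\sum_{l\leq n}P^n_G(l)\,\mu_\Gamma^{l}$ with weights independent of the group elements, and $\sum_{l\leq\sqrt n}P^n_G(l)\leq C<1$; this last bound is where $\deg\geq 2$ and the central limit theorem enter, and it is a deterministic statement about the graph walk, not part of the concentration over labellings. It is precisely this that lets one extract a single $l$ with $\sqrt n<l\leq n$ and $E_{\mu_\Gamma^{l}}(f)\leq \frac{\mathrm{const}}{\lambda_0}E_{\mu_\Gamma}(f)$, whence $E_{\mu_\Gamma^{l}}(f)\leq(l-\varepsilon)E_{\mu_\Gamma}(f)$ once $n$ (hence $g_0=2n$) is chosen from $\lambda_0$. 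The concentration step (Lemma \ref{probability}) then only compares $\overline{\mu}^{\,n}_{\Gamma,\alpha}$ with its mean and $\overline{\mu}_{\Gamma,\alpha}$ with $\mu_\Gamma$; it is not a union bound over short words being relators---that kind of counting belongs to the hyperbolicity statement (Theorem \ref{hyperbolicity_theorem}), not to this fixed-point theorem.
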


The geometric part of the proof of the theorem is based on Corollary \ref{l-step}. 
We use it with the following setting: the group $\Gamma$ is the free group $F_k$ 
generated by $S = \{s_1^\pm,\dots,s_k^\pm\}$, and  the random walk $\mu$ is the standard one, 
that is, it is given by 
$$
\mu(\gamma, \gamma') = \left\{\begin{array}{cl} \frac{1}{2k} & \mbox{if $\gamma'=\gamma s$ for some $s\in S$,}\\ 
0 & \mbox{otherwise.} \end{array} \right.
$$

The probabilistic part of the proof of Theorem \ref{fixed_point_theorem} is based on 
the following proposition. 
A similar proposition was formulated and proved by Silberman \cite{Silberman} 
when the target space is a Hilbert space, in the course of detailing Gromov's argument 
in \cite[3.12]{Gromov3}. 
Our proof is simpler than Silberman's, and we will present it in the Appendix. 
(Our proof, however, is {\em less elementary} than Silberman's, as we replace 
his explicit calculation of binomial coefficients by use of the central limit theorem.)

\begin{Proposition}[cf.~{\cite[Proposition 2.14]{Silberman}}]\label{second_proposition} 
Suppose that $G = (V,E)$ is a finite connected graph and $n$ is a positive 
integer satisfying 
\begin{enumerate} 
\renewcommand{\theenumi}{\roman{enumi}} 
\renewcommand{\labelenumi}{(\theenumi)}
\item $2\leq \deg(u)\leq d$ for all $u\in V$, 
\item $2\leq n\leq \girth(G)/2$. 
\end{enumerate} 
Then with probability at least $1-a_1 e^{-a_2|V|}$, $a_1 = a_1(k,n)$, $a_2=a_2(k,d,n)$, 
the following assertion holds{\rm :} 
for any $\cat$ space $Y$, any homomorphism 
$\rho^{(\alpha)}\colon \Gamma_\alpha\longrightarrow {\rm Isom}(Y)$ 
and any $\rho^{(\alpha)}$-equivariant map $f^{(\alpha)}\colon \Gamma_\alpha\longrightarrow Y$, 
there exists an $l$ {\rm (}depending on $f^{(\alpha)}${\rm )}, $\sqrt{n} < l\leq n$, 
such that 
$$ 
E_{\mu^l, \rho}(f) \leq \frac{C}{\lambda_1(G, Y)} E_{\mu, \rho}(f),  
$$ 
where $\rho = \rho^{(\alpha)}\circ \mathrm{pr}$, $f = f^{(\alpha)}\circ \mathrm{pr}$ with 
$\mathrm{pr}$ denoting the projection from $\Gamma$ onto $\Gamma_\alpha$, 
and $C$ is an absolute constant. 
\end{Proposition}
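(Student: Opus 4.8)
The plan is to reduce the statement, for each label $\alpha$, to a deterministic inequality relating the $l$-step energy of $f$ on the Cayley graph of $F_k$ to the single-step energy via the Wang invariant $\lambda_1(G,Y)$, and then to show that the deterministic inequality holds with the asserted probability once the girth is large. The key observation is that $n$-step walks on $F_k$ starting at $e$ that stay \emph{reduced} correspond, after pushing forward by the labelling $\alpha$ and using $f^{(\alpha)}$, to reduced paths of length $n$ in $G$; the hypothesis $2\le n\le\girth(G)/2$ guarantees that such a path has no backtracking issues and that $f$ is ``locally'' comparable to a map out of $G$ on balls of radius $n$. Concretely, for $\gamma\in\Gamma$ at word-distance $l$ from $e$ along a reduced word $s_{i_1}\cdots s_{i_l}$, the value $f(\gamma)=\rho(s_{i_1})\cdots\rho(s_{i_l})f(e)$ only depends on the $\Gamma_\alpha$-image, and choosing a vertex $v_0\in V$ and a reduced path in $G$ starting at $v_0$ whose $\alpha$-label is $s_{i_1}\cdots s_{i_l}$ lets one transport the energy computation from $\Gamma$ to $G$.

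First I would make this transport precise: fix $v_0\in V$, and for each reduced word $w$ of length $\le n$ count, over random $\alpha$, how the distribution of endpoints of $\mu^l$-walks projects to reduced paths in $G$. With probability controlled by a Chernoff-type bound (this is where the $e^{-a_2|V|}$ comes from — one applies a large-deviation estimate to the number of collisions among the $(2k)^{\le n}$ reduced words as labels of paths), the labelling is ``$n$-locally injective'' in the sense that distinct reduced words of length $\le n$ give distinct endpoints for paths in $G$ starting from a typical $v_0$, and moreover the empirical distribution of such paths is close to uniform. Granting this good event, the $l$-step energy $E_{\mu^l,\rho}(f)$ is, up to the combinatorial weights of the random walk on a $2k$-regular tree truncated at length $\le\girth(G)/2$, expressible as an average over $v_0\in V$ (weighted by $\nu_G$) of the squared-distance sums $\sum d_Y(\varphi(v_0),\varphi(v))^2$ for $\varphi=f\circ(\text{path from }v_0)$, i.e.\ in terms of quantities controlled by the Wang invariant $\lambda_1(G,Y)$ applied to the restriction of $f$ to $G$-neighborhoods.

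Next I would invoke the spectral-gap input: by definition of the Wang invariant, for the map $\varphi\colon V\to Y$ induced by $f$ (via a fixed spanning structure and the labelling),
\begin{equation*}
\frac12\sum_{u\in V}\nu_G(u)\sum_{v\in V}\mu_G(u,v)\,d_Y(\varphi(u),\varphi(v))^2 \;\ge\; \lambda_1(G,Y)\sum_{u\in V}\nu_G(u)\,d_Y(\varphi(u),\mathrm{bar}(\varphi_*\nu_G))^2,
\end{equation*}
and the left side is comparable to $E_{\mu,\rho}(f)$ while the variance term on the right controls $E_{\mu^l,\rho}(f)$ for any $l$ in the relevant range, because iterating $\mu$ up to $l\le n\le\girth(G)/2$ steps never exits the injectivity radius and hence the $l$-step distances are dominated by path-length times the graph variance. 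Combining these yields $E_{\mu^l,\rho}(f)\le \dfrac{C}{\lambda_1(G,Y)}E_{\mu,\rho}(f)$ with $C$ absolute; the freedom to pick $l$ in the window $\sqrt{n}<l\le n$ rather than a single $l$ is what makes the combinatorial (binomial-coefficient, or in our streamlined version central-limit) estimate go through uniformly in $f^{(\alpha)}$ — one shows the desired inequality must hold for \emph{some} such $l$ by a pigeonhole/averaging argument over the range, exactly paralleling Silberman's Proposition 2.14 but with $d_Y(\cdot,\cdot)^2$ replacing the Hilbert-space norm and the variance inequality of Lemma \ref{variance} replacing the parallelogram law.

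The main obstacle is the probabilistic step establishing the ``good event'': one must show that with probability $1-a_1e^{-a_2|V|}$ the random labelling $\alpha$ makes reduced words of length $\le n$ behave like genuinely independent decorations of paths in $G$, so that the energy identities transported from $\Gamma_\alpha$ to $G$ incur only bounded multiplicative error. This is a union bound over the at most $(2k)^{2n}$ pairs of reduced words combined with a concentration inequality for the number of edges/paths in $G$ carrying a prescribed label pattern — a sum of weakly dependent indicators — and getting the exponent linear in $|V|$ (rather than, say, $\log|V|$) requires the girth hypothesis $\girth(G)\ge g_0$ to rule out short cycles that would create correlations; the bound $n\le\girth(G)/2$ ensures that the relevant balls in $G$ are trees, which is what decouples the indicators. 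Everything else — the reduction to the Wang invariant, the variance estimates, the pigeonhole over $l$ — is then routine given the earlier lemmas, and I would defer the concentration computation to the Appendix as the authors indicate.
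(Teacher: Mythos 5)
Your overall architecture is the right one and is essentially the paper's (which adapts Silberman's Proposition 2.14): push the graph walk forward to $\Gamma$ through the labelling, use the Wang-invariant spectral gap on $G$, compare the reduced word length of the walk with a Bernoulli walk via the CLT, and extract a single $l\in(\sqrt n,n]$ by averaging. But two of the steps, as you formulate them, have genuine gaps. The first is your ``good event.'' Local injectivity of the labelling on balls --- distinct reduced words of length $\le n$ having well-defined, distinct endpoints from a typical $v_0$ --- is neither needed nor available: in each ball of radius $n$ there are only boundedly many (in $|V|$) non-backtracking paths, a label collision among them (e.g.\ two edges at a common vertex receiving the same generator) occurs with probability bounded away from zero per ball, and these events for far-apart balls are essentially independent; so with probability $1-e^{-c|V|}$ a positive \emph{fraction} of vertices have such collisions in their ball, i.e.\ your event fails rather than holds. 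What the argument actually needs --- and what the paper proves as Lemma \ref{probability} --- is only the two-sided measure comparison $\overline{\mu}_{\Gamma,\alpha}^n\ge\frac12\,\overline{\mu}_{\Gamma,G}^n$ and $\overline{\mu}_{\Gamma,\alpha}\le\mu_\Gamma$, obtained by concentration of each $\overline{\mu}_{\Gamma,\alpha}^n(\gamma,\gamma')$ about its mean (this is the correct version of your ``empirical distribution close to uniform,'' and your assertion that the graph energy is ``comparable to $E_{\mu,\rho}(f)$'' is precisely the second half of it, hence itself only a high-probability statement). Combined with the deterministic, girth-free inequality $E_{\overline{\mu}_{\Gamma,\alpha}^n}(f)\le\frac{2}{\lambda_1(G,Y)}E_{\overline{\mu}_{\Gamma,\alpha}}(f)$ (Lemma \ref{spectral_gap} transplanted by equivariance), this gives $E_{\overline{\mu}_{\Gamma,G}^n}(f)\le\frac{4}{\lambda_1(G,Y)}E_{\mu_\Gamma}(f)$ on the good event; injectivity plays no role.

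The second gap is your claim that the variance term ``controls $E_{\mu^l,\rho}(f)$ for any $l$ in the relevant range'' with an absolute constant; if that were true the existential quantifier over $l$ in the statement would be superfluous, and the justification offered (``$l$-step distances are dominated by path-length times the graph variance'') would at best give a constant growing with $n$, which would ruin the application in Theorem \ref{fixed_point_theorem}, where $n$ is chosen large compared to $C/\lambda_0$. What the spectral gap controls is only the energy of the pushed-forward $n$-step walk, whose expectation decomposes as $\sum_{l=0}^n P_G^n(l)\,E_{\mu_\Gamma^l}(f)$ with each individual weight $P_G^n(l)$ of order $1/\sqrt n$, so no single $l$ can be read off directly. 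The missing quantitative ingredient, which your sketch never states, is the bound $Q_G^n=\sum_{l\le\sqrt n}P_G^n(l)\le C<1$ with $C$ absolute (Lemma \ref{weighted_sum}); this is exactly where the hypothesis $\deg(u)\ge2$ enters (the distance of the walk from its starting point dominates the standard Bernoulli walk on $\Z$, and the CLT bounds the probability of ending within $\sqrt n$ of the start), and the girth hypothesis is used there only to ensure balls of radius $n$ are trees so that reduced word length equals graph distance. Only after this mass bound does the pigeonhole over $\sqrt n<l\le n$ yield the desired $l_0$ with constant of the form $4/\bigl((1-C)\lambda_1(G,Y)\bigr)$. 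So the plan needs to be repaired by replacing the injectivity event with the measure-comparison event and by inserting the $Q_G^n\le C<1$ estimate before the averaging step.
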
 

\noindent
{\em Proof of Theorem \ref{fixed_point_theorem}.}\quad 
For any $\cat$ space $Y$, it is easy to verify that $\lambda_1(G,Y) \geq \inf_{p\in Y} \lambda_1(G,TC_pY)$ 
(see \cite{Wang}). 
Therefore,  for any $Y\in \mathcal{Y}$, we have $\lambda_1(G,Y)\geq \lambda_0$. 

Now let $n$ be the minimum positive integer satisfying $C/\lambda_0 < \sqrt{n}$, and set $g_0 = 2n$. 
Then the assertion of Proposition \ref{second_proposition} holds with the high probability as stated there. 
Therefore, if $Y\in \mathcal{Y}$, we obtain 
$$
E_{\mu^l, \rho}(f) \leq \frac{C}{\lambda_0} E_{\mu, \rho}(f) \leq (l-\varepsilon) E_{\mu, \rho}(f), 
$$
where $\varepsilon = \sqrt{n} - C/\lambda_0$. 
By Corollary \ref{first_proposition}, $\rho(\Gamma) = \rho^{(\alpha)}(\Gamma_\alpha)$ 
fixes a point in $Y$. 
\qed

\medskip
Combining Theorem \ref{hyperbolicity_theorem} and Theorem \ref{fixed_point_theorem}, we obtain 

\begin{Theorem}\label{main_fixed_point_theorem1} 
Let $\{ G_l = (V_l, E_l) \}_{l\in L}$ be a sequence of finite connected graphs 
with $L$ an unbounded set of positive integers, and let $\mathcal{Y}$ be 
a family of $\cat$ spaces. 
Suppose that they satisfy the following conditions for some positive integer $d_0$, positive real number 
$\lambda_0$ and a choice of $\beta>1$ sufficiently close to $1${\rm :} 
\begin{enumerate}
\renewcommand{\theenumi}{\roman{enumi}}
\renewcommand{\labelenumi}{(\theenumi)}
\item $2\leq {\rm deg} (u)\leq d_0$ for all $l\in L$ and all $u\in V_l$, 
\item $\girth(G_l)\geq l$ and 
${\rm diam}(G_l)\leq {\rm const}\cdot l$ for all $l\in L$, 
\item $\lambda_1(G_l, TC_pY) \geq \lambda_0$ for all $l\in L$, all $Y\in \mathcal{Y}$ and all $p\in Y$,  
\item the number of embedded paths in $G_l$ of length less than $\frac{l}{2}$ 
is less than ${\rm const}\cdot \beta^{l/2}$. 
\end{enumerate} 
Then a random group of the graph model associated with $\{ G_l \}_{l\in L}$ is infinite hyperbolic 
and has property F$\mathcal{Y}$. 
\end{Theorem}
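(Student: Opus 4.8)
The plan is to verify that the hypotheses of this theorem feed directly into Theorem \ref{hyperbolicity_theorem} and Theorem \ref{fixed_point_theorem}, and then to intersect the two resulting high-probability events over the common probability space $\Lambda(G_l,k)$.

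First I would fix the parameter $\beta>1$ as close to $1$ as Theorem \ref{hyperbolicity_theorem} requires, and observe that conditions (i), (ii), (iv) of the present theorem are precisely conditions (i), (ii), (iii) of Theorem \ref{hyperbolicity_theorem} for the sequence $\{G_l\}_{l\in L}$. Hence that theorem applies, and a random group of the graph model associated with $\{G_l\}$ is non-elementary hyperbolic, in particular infinite; that is, $|\{\alpha\in\Lambda(G_l,k)\mid \Gamma_\alpha\ \text{is non-elementary hyperbolic}\}|/|\Lambda(G_l,k)|\to 1$ as $l\to\infty$.

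Next I would apply Theorem \ref{fixed_point_theorem} with the constant $\lambda_0$ supplied by condition (iii), obtaining the threshold $g_0=g_0(\lambda_0)$. Since $L$ is unbounded and $\girth(G_l)\geq l$ by condition (ii), all but finitely many $l\in L$ satisfy $\girth(G_l)\geq l\geq g_0$; for such $l$ the graph $G_l$ meets hypotheses (i)--(iii) of Theorem \ref{fixed_point_theorem} for the data $k,d_0,\lambda_0$ and the family $\mathcal{Y}$, using $\lambda_1(G_l,TC_pY)\geq\lambda_0$ from condition (iii) (and, as recalled in the proof of that theorem, $\lambda_1(G_l,Y)\geq\inf_{p\in Y}\lambda_1(G_l,TC_pY)\geq\lambda_0$). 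Therefore, for all but finitely many $l$, with probability at least $1-a_1 e^{-a_2|V_l|}$, where $a_1=a_1(k,\lambda_0)$ and $a_2=a_2(k,d_0,\lambda_0)$, $\Gamma_\alpha$ has property F$\mathcal{Y}$; since $|V_l|\to\infty$, this probability tends to $1$.

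Finally I would combine the two conclusions: writing $A_l$ for the event ``$\Gamma_\alpha$ is non-elementary hyperbolic'' and $B_l$ for the event ``$\Gamma_\alpha$ has property F$\mathcal{Y}$'', both events live on $\Lambda(G_l,k)$ and satisfy $P(A_l)\to 1$, $P(B_l)\to 1$, whence $P(A_l\cap B_l)\geq 1-P(A_l^{c})-P(B_l^{c})\to 1$ as $l\to\infty$. Thus a random group of the graph model associated with $\{G_l\}_{l\in L}$ is simultaneously infinite hyperbolic and has property F$\mathcal{Y}$. There is no genuine obstacle beyond bookkeeping; the only points needing a moment's care are that $g_0$ depends on $\lambda_0$ alone, so that $\girth(G_l)\geq l$ forces condition (ii) of Theorem \ref{fixed_point_theorem} for large $l$, and that the two probabilistic statements, proved on the same sample space, may legitimately be intersected.
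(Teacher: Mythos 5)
Your proposal is correct and matches the paper's own (implicit) argument: the paper simply states that the theorem is obtained by combining Theorem \ref{hyperbolicity_theorem} and Theorem \ref{fixed_point_theorem}, which is exactly the hypothesis-matching and intersection of the two high-probability events that you carry out. The points you flag — that $g_0$ depends only on $\lambda_0$, that $\girth(G_l)\geq l\geq g_0$ for all large $l\in L$, and that the two events live on the same sample space $\Lambda(G_l,k)$ — are precisely the bookkeeping the paper leaves to the reader.
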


To formulate a class of $\cat$ spaces so that the condition (iii) of Theorem \ref{main_fixed_point_theorem1} 
is satisfied, we recall the definition of the invariant of a $\cat$ space introduced in \cite{Izeki-Nayatani}.

\begin{Definition}
Let $T$ be a $\cat$ space.
Let $\mu = \sum_{i=1}^m t_i\, {\rm Dirac}_{v_i}$ be a probability measure with finite support on $T$ 
and $\overline{\mu}\in T$ the barycenter of $\mu$.
Consider all maps $\iota\colon {\rm supp}\,\mu \longrightarrow \R^m$ satisfying
\begin{equation}\label{realization}
\Vert\iota(v_i)\Vert = d_T(\overline{\mu}, v_i),\quad \Vert\iota(v_i)-\iota(v_j)\Vert \leq d_T(v_i, v_j),
\end{equation}
and set
$$
\delta(\mu) = \inf_\iota  \left[ \biggl\Vert \int_T \iota(v)\, d\mu(v)\biggr\Vert^2 
\biggm/\int_T \Vert\iota(v)\Vert^2\, d\mu(v) \right] \in [0,1].
$$
We then define
$$
\delta(T) = \sup_\mu \delta(\mu)\in [0,1]. 
$$
Here, if we restrict the choices of $\mu$ to those with barycenter at a given $v \in T$, 
we denote the corresponding number by $\delta(T, v)$.
\end{Definition}

\begin{Theorem}\label{main_fixed_point_theorem1.5} 
Let $0\leq \delta_0< 1$, and let $\mathcal{Y}_{\leq\delta_0}$ denote the class of 
$\cat$ spaces $Y$ satisfying $\delta(TC_pY)\leq \delta_0$ for all $p\in Y$. 
Let $\{ G_l = (V_l, E_l) \}_{l\in L}$ be a sequence of finite connected graphs 
with $L$ an unbounded set of positive integers, satisfying the following conditions 
for some 
positive integer $d_0$, positive real number $\mu_0$ and a choice of $\beta>1$
sufficiently close to $1${\rm :} 
\begin{enumerate}
\renewcommand{\theenumi}{\roman{enumi}}
\renewcommand{\labelenumi}{(\theenumi)}
\item $3\leq {\rm deg} (u)\leq d_0$ for all $l\in L$ and all $u\in V_l$, 
\item $\girth(G_l)\geq l$ and 
${\rm diam}(G_l)\leq {\rm const}\cdot l$ for all $l\in L$, 
\item $\lambda_1(G_l, \R) \geq \mu_0$ for all $l\in L$, 
\item the number of embedded paths in $G_l$ of length less than $\frac{l}{2}$ 
is less than ${\rm const}\cdot \beta^{l/2}$. 
\end{enumerate} 
Then a random group of the graph model associated with $\{ G_l \}_{l\in L}$ is infinite hyperbolic 
and has property F$\mathcal{Y}_{\leq\delta_0}$. 
\end{Theorem}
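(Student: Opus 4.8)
The plan is to deduce Theorem~\ref{main_fixed_point_theorem1.5} from Theorem~\ref{main_fixed_point_theorem1}; the only work lies in translating the spectral hypothesis. Conditions (i), (ii) and (iv) of Theorem~\ref{main_fixed_point_theorem1.5} give conditions (i), (ii) and (iv) of Theorem~\ref{main_fixed_point_theorem1} directly, since $\deg(u)\geq 3$ implies $\deg(u)\geq 2$; and $\mathcal{Y}_{\leq\delta_0}$ is nonempty, as $\delta(\R)=0$ so that $\R$, and more generally every Hadamard manifold, lies in it. Hence it suffices to show that the two hypotheses $\lambda_1(G_l,\R)\geq\mu_0$ and $\delta(TC_pY)\leq\delta_0$ together force $\lambda_1(G_l,TC_pY)\geq\lambda_0$, where $\lambda_0:=(1-\delta_0)\mu_0>0$. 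Theorem~\ref{main_fixed_point_theorem1}, applied to the sequence $\{G_l\}_{l\in L}$ and the family $\mathcal{Y}=\mathcal{Y}_{\leq\delta_0}$, then yields exactly the desired conclusion.

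So the real content is the general comparison estimate
\begin{equation*}
\lambda_1(G,T)\ \geq\ \bigl(1-\delta(T)\bigr)\,\lambda_1(G,\R),
\end{equation*}
valid for any finite connected graph $G$ and any $\cat$ space $T$; I would apply it with $G=G_l$ and $T=TC_pY$ to get $\lambda_1(G_l,TC_pY)\geq(1-\delta_0)\mu_0=\lambda_0$. To prove the estimate, fix a nonconstant $\varphi\colon V\longrightarrow T$, put $\mu=\varphi_*\nu_G$ and $\overline{\varphi}=\mathrm{bar}(\mu)$, and fix $\varepsilon$ with $0<\varepsilon<1-\delta(T)$. Since $\delta(\mu)\leq\delta(T)$, there is a map $\iota\colon\mathrm{supp}\,\mu\longrightarrow\R^N$, $N=|\mathrm{supp}\,\mu|$, satisfying \eqref{realization} together with
\begin{equation*}
\Bigl\Vert\int_T\iota\,d\mu\Bigr\Vert^2\ \leq\ \bigl(\delta(T)+\varepsilon\bigr)\int_T\Vert\iota\Vert^2\,d\mu.
\end{equation*}
Set $\psi=\iota\circ\varphi\colon V\longrightarrow\R^N$. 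Then $\Vert\psi(u)\Vert=d_T(\overline{\varphi},\varphi(u))$, $\Vert\psi(u)-\psi(v)\Vert\leq d_T(\varphi(u),\varphi(v))$, and the $\nu_G$-weighted average $\overline{\psi}=\sum_{u}\nu_G(u)\psi(u)$ equals $\int_T\iota\,d\mu$; moreover $\psi$ is nonconstant (a constant $\psi$ would make $\iota$ constant on $\mathrm{supp}\,\mu$, which — its nonzero value being ruled out by the displayed bound and $\delta(T)+\varepsilon<1$, and its zero value forcing $\varphi$ constant — is impossible). Consequently
\begin{equation*}
\sum_{u\in V}\nu_G(u)\Vert\psi(u)-\overline{\psi}\Vert^2\ =\ \sum_{u\in V}\nu_G(u)\Vert\psi(u)\Vert^2-\Bigl\Vert\int_T\iota\,d\mu\Bigr\Vert^2\ \geq\ \bigl(1-\delta(T)-\varepsilon\bigr)\sum_{u\in V}\nu_G(u)\Vert\psi(u)\Vert^2.
\end{equation*}

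The final step combines this with the spectral gap of $G$ applied coordinatewise to the $\R^N$-valued $\psi$ — the numerator and denominator of the $\R^N$-Rayleigh quotient split into sums over the $N$ coordinates, so that quotient is at least $\lambda_1(G,\R)$ — and with the Lipschitz bound on $\iota$, giving
\begin{align*}
\frac{1}{2}\sum_{u\in V}\nu_G(u)\sum_{v\in V}\mu_G(u,v)\,d_T(\varphi(u),\varphi(v))^2
&\ \geq\ \frac{1}{2}\sum_{u\in V}\nu_G(u)\sum_{v\in V}\mu_G(u,v)\,\Vert\psi(u)-\psi(v)\Vert^2\\
&\ \geq\ \lambda_1(G,\R)\sum_{u\in V}\nu_G(u)\Vert\psi(u)-\overline{\psi}\Vert^2\\
&\ \geq\ \lambda_1(G,\R)\bigl(1-\delta(T)-\varepsilon\bigr)\sum_{u\in V}\nu_G(u)\,d_T(\varphi(u),\overline{\varphi})^2.
\end{align*}
Dividing by the positive denominator $\sum_{u}\nu_G(u)\,d_T(\varphi(u),\overline{\varphi})^2$ of $\mathrm{RQ}(\varphi)$ and letting $\varepsilon\to0$ gives $\mathrm{RQ}(\varphi)\geq(1-\delta(T))\lambda_1(G,\R)$, and taking the infimum over nonconstant $\varphi$ proves the comparison estimate; feeding $\lambda_1(G_l,TC_pY)\geq\lambda_0$ into Theorem~\ref{main_fixed_point_theorem1} finishes the proof. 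The main obstacle is precisely this comparison estimate — in particular the bookkeeping when $\varphi$ identifies vertices (so $\mu$ is supported on fewer than $|V|$ points) and the verification that the composed map $\psi$ remains nonconstant, so that every Rayleigh quotient in play has strictly positive denominator; granting that, the limit $\varepsilon\to0$ is routine.
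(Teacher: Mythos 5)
Your proposal is correct and follows essentially the same route as the paper: the paper also deduces Theorem~\ref{main_fixed_point_theorem1.5} from Theorem~\ref{main_fixed_point_theorem1} via the comparison estimate $\lambda_1(G_l,TC_pY)\geq(1-\delta(TC_pY))\lambda_1(G_l,\R)\geq(1-\delta_0)\mu_0$, the only difference being that the paper simply cites this inequality from \cite[Proposition 5.3]{Izeki-Nayatani}, whereas you reprove it (correctly) by composing $\varphi$ with a near-optimal realization $\iota$ and applying the Euclidean spectral gap coordinatewise.
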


\begin{proof} 
If $Y\in \mathcal{Y}_{\leq\delta_0}$, then by \cite[Proposition 5.3]{Izeki-Nayatani}, 
$$
\lambda_1(G_l,TC_pY)\geq (1-\delta(TC_pY)) \lambda_1(G_l, \R) \geq (1-\delta_0) \lambda_1(G_l, \R) 
$$
for all $p\in Y$. 
\end{proof} 

We now consider the sequence of graphs $\{G_{l'/j}^{(j)}\}_{l'\in jL}$ as in the previous subsection, 
where $j$ is chosen large enough so that the condition (iv$\mbox{}'$) is satisfied for a choice of $\beta>1$ 
sufficiently close to $1$. 
The graph $G_{l'/j}^{(j)}$ satisfies the condition (iii$\mbox{}'$) : $\lambda_1(G_{l'/j}^{(j)}, \R)\geq c(\mu_0, j)>0$. 

\begin{Corollary}\label{main_fixed_point_theorem2} 
Let $0\leq \delta_0< 1$, and let $\mathcal{Y}_{\leq\delta_0}$ denote the class of 
$\cat$ spaces $Y$ satisfying $\delta(TC_pY)\leq \delta_0$ for all $p\in Y$. 
Let $\{ G_l = (V_l, E_l) \}_{l\in L}$ be a sequence of finite connected graphs 
with $L$ an unbounded set of positive integers, satisfying the following conditions for some 
positive integer $d_0$ and positive real number $\mu_0${\rm :} 
\begin{enumerate}
\renewcommand{\theenumi}{\roman{enumi}}
\renewcommand{\labelenumi}{(\theenumi)}
\item $3\leq {\rm deg} (u)\leq d_0$ for all $l\in L$ and all $u\in V_l$, 
\item $\girth(G_l)\geq l$ and 
${\rm diam}(G_l)\leq {\rm const}\cdot l$ for all $l\in L$, 
\item $\lambda_1(G_l, \R) \geq \mu_0$ for all $l\in L$. 
\end{enumerate} 
For each $l\in L$, let $G_l^{(j)}$ be the $j$-subdivision of $G_l$, and set $l' = jl$. 
Here, $j$ is chosen large enough so that $\{G_{l'/j}^{(j)}\}_{l'\in jL}$ satisfies\\ 
\quad {\rm (iv$\mbox{}'$)}\,\, the number of embedded paths in $G_{l'/j}^{(j)}$ of length less than
$\frac{l'}{2}$ is less than \\
\phantom{\quad (iv$\mbox{}'$)\,\,} ${\rm const}\cdot \beta^{l'/2}$ \\
for a choice of $\beta>1$ suffciently close to $1$. 
Then a random group of the graph model associated with $\{G_{l'/j}^{(j)}\}_{l'\in jL}$ is infinite hyperbolic 
and has property F$\mathcal{Y}_{\leq\delta_0}$. 
\end{Corollary}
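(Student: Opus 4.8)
The plan is to deduce the statement from Theorem~\ref{main_fixed_point_theorem1}, applied not to $\{G_l\}_{l\in L}$ itself but to the sequence of $j$-subdivisions $\{G_{l'/j}^{(j)}\}_{l'\in jL}$. First I would recall, as explained in the subsection on the graph model, that passing from $\{G_l\}_{l\in L}$ satisfying (i)--(iii) to its $j$-subdivision produces a sequence $\{G_{l'/j}^{(j)}\}_{l'\in jL}$ satisfying (i$'$)--(iii$'$): the degrees now lie in $[2,d_0]$ (the new vertices having degree $2$), one has $\girth(G_{l'/j}^{(j)})\geq l'$ and $\diam(G_{l'/j}^{(j)})\leq \mathrm{const}\cdot l'$, and --- this is the one step where something is lost --- $\lambda_1(G_{l'/j}^{(j)},\R)\geq c(\mu_0,j)>0$ with $c(\mu_0,j)$ a positive constant depending only on $\mu_0$ and $j$ (see \cite{Silberman}). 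The choice of $j$ in the statement is precisely the one guaranteeing that $\{G_{l'/j}^{(j)}\}_{l'\in jL}$ also satisfies (iv$'$) for some $\beta>1$ close to $1$, i.e.~condition (iv) of Theorem~\ref{main_fixed_point_theorem1}.

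It then remains to produce the spectral-gap hypothesis (iii) of Theorem~\ref{main_fixed_point_theorem1} for the family $\mathcal{Y}_{\leq\delta_0}$. For this I would argue exactly as in the proof of Theorem~\ref{main_fixed_point_theorem1.5}: by \cite[Proposition~5.3]{Izeki-Nayatani}, for every $Y\in\mathcal{Y}_{\leq\delta_0}$ and every $p\in Y$,
\[
\lambda_1\bigl(G_{l'/j}^{(j)},TC_pY\bigr)\ \geq\ \bigl(1-\delta(TC_pY)\bigr)\,\lambda_1\bigl(G_{l'/j}^{(j)},\R\bigr)\ \geq\ (1-\delta_0)\,c(\mu_0,j)\ =:\ \lambda_0\ >\ 0 .
\]
Thus $\{G_{l'/j}^{(j)}\}_{l'\in jL}$ together with $\mathcal{Y}=\mathcal{Y}_{\leq\delta_0}$ verifies all of the hypotheses (i)--(iv) of Theorem~\ref{main_fixed_point_theorem1}, with $d_0$, this $\lambda_0$, and the chosen $\beta$. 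That theorem then yields that a random group of the graph model associated with $\{G_{l'/j}^{(j)}\}_{l'\in jL}$ is infinite hyperbolic and has property~F$\mathcal{Y}_{\leq\delta_0}$, which is the claim.

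I expect no serious obstacle here; the statement is essentially a packaging of material already developed. The one point that must not be overlooked --- and the reason one invokes Theorem~\ref{main_fixed_point_theorem1} rather than Theorem~\ref{main_fixed_point_theorem1.5} --- is that $j$-subdivision creates vertices of degree $2$, so one genuinely needs the versions of the fixed-point theorem and of Proposition~\ref{second_proposition} that allow $\deg(u)=2$; this is exactly the refinement the paper has set up (and the reason the central limit theorem replaces the large-deviation estimate). The remaining verifications --- that girth and diameter rescale by $j$, that the embedded-path count is controlled by choosing $j$ large, and that the spectral gap degrades only to the positive constant $c(\mu_0,j)$ --- are all routine.
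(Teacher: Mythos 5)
Your proposal is correct and follows essentially the same route as the paper: the paper's proof likewise reduces to checking hypothesis (iii) of Theorem~\ref{main_fixed_point_theorem1} for the subdivided sequence, using $\lambda_1(G_{l'/j}^{(j)},\R)\geq c(\mu_0,j)$ together with $\lambda_1(G_{l'/j}^{(j)},TC_pY)\geq(1-\delta_0)\lambda_1(G_{l'/j}^{(j)},\R)$ from \cite[Proposition 5.3]{Izeki-Nayatani}, exactly as in the proof of Theorem~\ref{main_fixed_point_theorem1.5}. Your remark about degree-$2$ vertices being the reason Theorem~\ref{main_fixed_point_theorem1} (rather than \ref{main_fixed_point_theorem1.5}) must be invoked is also the point the paper's setup is designed to handle.
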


\begin{proof} 
One has only to verify that $\lambda_1(G_{l'/j}^{(j)},TC_pY)$ is bounded from below by a positive constant, 
independent of $l'$, $Y$ and $p$. 
As was already noted, we have $\lambda_1(G_{l'/j}^{(j)}, \R) \geq c(\mu_0, j)$ for all $l'\in jL$. 
Therefore, as in the predeeding proof, 
$$
\lambda_1(G_{l'/j}^{(j)},TC_pY) \geq (1-\delta_0) c(\mu_0, j), 
$$
getting the desired estimate. 
\end{proof}

\begin{Remark} 
With the notations and assumptions as in Theorem \ref{main_fixed_point_theorem1}, it is 
plausible that $\lambda_1(G_l^{(j)}, TC_pY)\geq c(\lambda_0, j)>0$ holds for all 
$l\in L$, all $y\in \mathcal{Y}$ and all $p\in Y$. If this was the case, we would 
obtain a version of Theorem \ref{main_fixed_point_theorem1} for the sequence of graphs 
$\{G_{l'/j}^{(j)}\}_{l'\in jL}$. 
\end{Remark}

\section{Distortion and the invariant $\delta$}\label{section_delta}

In view of the assumption for $\cat$ spaces in Corollary \ref{main_fixed_point_theorem2}, 
it is important to estimate the invariant $\delta$ of the tangent cones of a $\cat$ space. 
In this section, we first give an upper bound of $\delta(T, 0_T)$, where $T$ is a $\cat$ 
metric cone with cone point $0_T$, in terms of the radial distortion (defined below) of $T$. 
We then estimate the radial distortion of the tangent cones of some Euclidean buildings. 

We begin with some definitions. 

\begin{Definition} 
(1)\,\, For a metric space $S$, let $T = C(S) = (S \times \R_{\geq 0}) / (S \times \{0\})$.
Define a distance $d_T$ on $T$ by $d_{T}(v, v') = t^2 + {t'}^2 - 2tt'\cos \min \{ d_S(u,u'), \pi \}$, 
where $v=(u,t), v'=(u',t') \in T$. 
The metric space $(T, d_T)$  is called the {\it metric cone} over $S$. \\
(2)\,\, Let $D_{\rm rad}(T)$ denote the infimum number $D$ satisfying the following condition: 
there exists a map $\iota\colon T\longrightarrow \mathcal{H}$, where $\mathcal{H}$ is a Hilbert 
space, such that 
\begin{equation}\label{radial}
\iota(v) = t\cdot \iota(u)\quad \mbox{with}\quad \Vert\iota(u)\Vert=1
\end{equation} 
and 
\begin{equation}\label{bilipschitz} 
\frac{1}{D}\cdot d_T(v,v') \leq \Vert\iota(v) - \iota(v')\Vert \leq d_T(v,v') 
\end{equation} 
for all $v=(u,t), v'=(u',t') \in T$. 
If no such map exists, then we define $D_{\rm rad}(T) = \infty$. 
The number $D_{\rm rad}(T)$ is called the {\it radial distortion} of $T$.
Note that $D_{\rm rad}(T)$ is not less than the usual distortion 
(cf.~\cite{Matousek}) of $T$. 
\end{Definition}

\begin{Lemma}\label{estimate_by_distortion}
Let $T$ be a $\cat$ space, and $\mu$ a finite-support probability measure on $T$. 
Let $\iota\colon T\longrightarrow \mathcal{H}$ be a $1$-Lipschitz map, where $\mathcal{H}$ 
is a Hilbert space. 
Then we have 
$$
\int_T \Vert\iota(v)-\overline{\iota_*\mu}\Vert^2\, d\mu(v) 
\geq \frac{1}{D(\iota)^2} \int_T d_T(v, \overline{\mu})^2\, d\mu(v), 
$$
where $D(\iota)$ is the distortion of $\iota$, that is, the minimum number $D$ such that 
\eqref{bilipschitz} holds for all $v, v'\in T$. 
\end{Lemma}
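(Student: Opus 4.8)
The idea is to rewrite the left-hand side as one half of the average squared pairwise distance of the pushforward $\iota_*\mu$ --- this is an \emph{identity}, because $\mathcal{H}$ is a Hilbert space --- then to trade distances in $\mathcal{H}$ for distances in $T$ using the lower bi-Lipschitz bound recorded in $D(\iota)$, and finally to apply the variance inequality \eqref{variance_iequality2} of Lemma \ref{variance} inside the $\cat$ space $T$.

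First I would note that, since $\mathcal{H}$ is a Hilbert space, the barycenter $\overline{\iota_*\mu}$ coincides with the vector mean $m:=\int_T\iota(v)\,d\mu(v)$: expanding the inner product gives, for every $q\in\mathcal{H}$,
$$\int_T\|\iota(v)-q\|^2\,d\mu(v)=\int_T\|\iota(v)-m\|^2\,d\mu(v)+\|m-q\|^2,$$
so $q=m$ is the unique minimizer. The same computation (equivalently, the equality case of Lemma \ref{variance} in a Hilbert space) yields the identity
$$\int_T\|\iota(v)-\overline{\iota_*\mu}\|^2\,d\mu(v)=\frac12\int_T\!\int_T\|\iota(v)-\iota(v')\|^2\,d\mu(v)\,d\mu(v').$$

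Next I would invoke the left inequality in \eqref{bilipschitz}, namely $\|\iota(v)-\iota(v')\|\ge d_T(v,v')/D(\iota)$ for all $v,v'\in T$ (valid already for $D=D(\iota)$, by passing to the infimum), to obtain
$$\frac12\int_T\!\int_T\|\iota(v)-\iota(v')\|^2\,d\mu(v)\,d\mu(v')\ \ge\ \frac1{D(\iota)^2}\cdot\frac12\int_T\!\int_T d_T(v,v')^2\,d\mu(v)\,d\mu(v').$$
Finally, applying \eqref{variance_iequality2} to the measure $\mu$ on the $\cat$ space $T$ gives $\tfrac12\int_T\!\int_T d_T(v,v')^2\,d\mu\,d\mu\ge\int_T d_T(v,\overline{\mu})^2\,d\mu(v)$, and chaining the three displays produces the asserted inequality.

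\textbf{Main obstacle.} There is no genuinely hard step here; the only points requiring care are the identification of $\overline{\iota_*\mu}$ with the mean --- so that the Hilbert-space identity, not merely the $\cat$ inequality, is available, and oriented in the direction needed --- and the observation that only the lower bi-Lipschitz bound enters the estimate, the $1$-Lipschitz property of $\iota$ serving merely to make $D(\iota)$ well defined via \eqref{bilipschitz}. Since $\mu$ has finite support, all the integrals are finite sums and no integrability or measurability issue arises.
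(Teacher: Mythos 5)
Your proof is correct and follows essentially the same route as the paper: rewrite the left-hand side as half the average squared pairwise distance via the Hilbert-space equality case of \eqref{variance_iequality2}, apply the lower bi-Lipschitz bound from \eqref{bilipschitz}, then use \eqref{variance_iequality2} in $T$. Your explicit verification that $\overline{\iota_*\mu}$ is the vector mean is a small extra justification the paper leaves implicit, but otherwise the arguments coincide.
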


\begin{proof}
By using \eqref{variance_iequality2} and the fact that the inequality becomes an equality 
for a Hilbert space, we obtain 
\begin{eqnarray*}
\int_T \Vert\iota(v)-\overline{\iota_*\mu}\Vert^2\, d\mu(v) 
&=& \frac{1}{2} \int_\mathcal{H} \Vert\iota(v)-\iota(w)\Vert^2\, d\mu(v) d\mu(w) \\
&\geq& \frac{1}{D(\iota)^2}\cdot \frac{1}{2} \int_T d_T(v, w)^2\, d\mu(v) d\mu(w) \\
&\geq& \frac{1}{D(\iota)^2} \int_T d_T(v, \overline{\mu})^2\, d\mu(v). 
\end{eqnarray*}
\end{proof}

\begin{Proposition}\label{estimate_of_delta_by_distortion}
Let $T$ be a $\cat$ metric cone with cone point $0_T$. 
Then we have 
$$
\delta(T, 0_T) \leq 1 - \frac{1}{{D_{\rm rad}(T)}^2}. 
$$
\end{Proposition}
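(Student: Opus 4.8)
The plan is to bound $\delta(\mu)$ for an arbitrary finitely supported probability measure $\mu=\sum_{i=1}^m t_i\Dirac_{v_i}$ on $T$ whose barycenter is $\overline{\mu}=0_T$; taking the supremum over all such $\mu$ then gives $\delta(T,0_T)$. If $D_{\rm rad}(T)=\infty$ there is nothing to prove, so I would fix a real number $D>D_{\rm rad}(T)$ and a map $\iota\colon T\longrightarrow\mathcal{H}$ into a Hilbert space satisfying the radial condition \eqref{radial} and the bi-Lipschitz condition \eqref{bilipschitz} with distortion $D(\iota)\leq D$. We may assume $\mu\neq\Dirac_{0_T}$, so that the denominators occurring below are positive.

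First I would check that the restriction $\iota|_{{\rm supp}\,\mu}$ is an admissible map in the definition of $\delta(\mu)$. Write $v_i=(u_i,r_i)$, so that $r_i=|v_i|=d_T(0_T,v_i)$. By \eqref{radial} we have $\Vert\iota(v_i)\Vert=r_i$, and since $\overline{\mu}=0_T$ this reads $\Vert\iota(v_i)\Vert=d_T(\overline{\mu},v_i)$, which is the first requirement in \eqref{realization}; the right-hand inequality in \eqref{bilipschitz} gives $\Vert\iota(v_i)-\iota(v_j)\Vert\leq d_T(v_i,v_j)$, the second requirement. The vectors $\iota(v_1),\dots,\iota(v_m)$ span a subspace of $\mathcal{H}$ of dimension at most $m$, so composing with an isometric linear embedding of that subspace into $\R^m$ turns $\iota|_{{\rm supp}\,\mu}$ into an admissible competitor. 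Hence
\[
\delta(\mu)\ \leq\ \frac{\bigl\Vert\sum_{i=1}^m t_i\iota(v_i)\bigr\Vert^2}{\sum_{i=1}^m t_i\Vert\iota(v_i)\Vert^2}.
\]

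Next I would estimate this quotient. Put $b=\sum_{i=1}^m t_i\iota(v_i)$, which is the barycenter $\overline{\iota_*\mu}$ in $\mathcal{H}$. Since $\mathcal{H}$ is Hilbert, the variance identity $\sum_{i}t_i\Vert\iota(v_i)\Vert^2=\sum_{i}t_i\Vert\iota(v_i)-b\Vert^2+\Vert b\Vert^2$ holds. On the other hand $\iota$ is $1$-Lipschitz, so Lemma \ref{estimate_by_distortion} applied to $\mu$, combined with $\overline{\mu}=0_T$ and $\Vert\iota(v_i)\Vert=r_i=d_T(v_i,0_T)$, gives
\[
\sum_{i=1}^m t_i\Vert\iota(v_i)-b\Vert^2\ \geq\ \frac{1}{D(\iota)^2}\sum_{i=1}^m t_i\,d_T(v_i,0_T)^2\ =\ \frac{1}{D(\iota)^2}\sum_{i=1}^m t_i\Vert\iota(v_i)\Vert^2\ \geq\ \frac{1}{D^2}\sum_{i=1}^m t_i\Vert\iota(v_i)\Vert^2.
\]
Combining the last two displays yields $\Vert b\Vert^2\leq\bigl(1-D^{-2}\bigr)\sum_{i}t_i\Vert\iota(v_i)\Vert^2$, hence $\delta(\mu)\leq 1-1/D^2$. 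Taking the supremum over $\mu$ gives $\delta(T,0_T)\leq 1-1/D^2$ for every $D>D_{\rm rad}(T)$, and letting $D$ decrease to $D_{\rm rad}(T)$ finishes the proof.

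I do not expect a genuine obstacle: the whole argument rests on the single observation that a radial, $1$-Lipschitz, distortion-$D_{\rm rad}(T)$ map restricts, on the support of any measure centered at the cone point, to an admissible realization $\iota$ in the definition of $\delta$, after which Lemma \ref{estimate_by_distortion} supplies exactly the needed lower bound on $\sum_i t_i\Vert\iota(v_i)-b\Vert^2$. The only points requiring a little care are the bookkeeping that makes $\Vert\iota(v_i)\Vert$ equal to $d_T(\overline{\mu},v_i)$ — this is precisely where the hypotheses $\overline{\mu}=0_T$ and the radial form \eqref{radial} enter — and the harmless passage from the Hilbert target $\mathcal{H}$ back to $\R^m$ in the definition of $\delta(\mu)$, which the dimension count above handles.
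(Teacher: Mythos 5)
Your proposal is correct and follows essentially the same route as the paper: restrict the radial $1$-Lipschitz map $\iota$ to ${\rm supp}\,\mu$ to get an admissible competitor in the definition of $\delta(\mu)$ (using $\overline{\mu}=0_T$ and \eqref{radial} to get $\Vert\iota(v_i)\Vert=d_T(\overline{\mu},v_i)$), then combine the Hilbert-space variance identity with Lemma \ref{estimate_by_distortion}. Your extra care about passing from $\mathcal{H}$ to $\R^m$ and about taking $D\downarrow D_{\rm rad}(T)$ only makes explicit details the paper leaves implicit.
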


\begin{proof}
Let $\iota \colon T \longrightarrow \mathcal{H}$ be a map with the properties 
\eqref{radial} and \eqref{bilipschitz}. 
Let $\mu$ be a finite-support probability measure on $T$ such that $\overline{\mu} = 0_T$. 
Then 
$$
\delta(\mu) \leq \frac{\Vert\int_T \iota(v)\, d\mu(v)\Vert^2}{\int_T \Vert\iota(v)\Vert^2\, d\mu(v)} 
= \frac{\Vert\overline{\iota_*\mu}\Vert^2}{\int_T \Vert\iota(v)\Vert^2\, d\mu(v)}. 
$$
On the other hand, 
$$
\int_T \Vert\iota(v)-\overline{\iota_*\mu}\Vert^2\, d\mu(v) = \int_T \Vert\iota(v)\Vert^2\, d\mu(v) 
- \Vert\overline{\iota_*\mu}\Vert^2. 
$$
Therefore, 
$$
\delta(\mu)\leq 1 - \frac{\int_T \Vert\iota(v)-\overline{\iota_*\mu}\Vert^2\, d\mu(v)}{
\int_T \Vert\iota(v)\Vert^2\, d\mu(v)}\leq 1 - \frac{1}{D(\iota)^2}
$$
by Lemma \ref{estimate_by_distortion}, and the proposition follows. 
\end{proof}

\begin{Remark} 
It should be useful in future study to have an estimate of the Wang invariant $\lambda_1(G,T)$ from below. 
Indeed, we can show that 
\begin{equation}\label{lambda_wang_distortion} 
\lambda_1(G,T) \geq \frac{1}{D(T)^2} \lambda_1(G, \R), 
\end{equation} 
where $D(T)$ denotes the (usual) distortion of $T$, 
holds for a finite connected graph $G$ and any $\cat$ space $T$ which is not 
necessarily a cone. 
By the variance inequality \eqref{variance_iequality2}, the denominator of \eqref{RQ} is estimated as 
$$
\sum_{u \in V} \nu_G(u) d_T(\varphi(u), \mathrm{bar}(\varphi_*\nu_G))^2 
\leq \frac{1}{2} \sum_{u,v \in V}\nu_G(u)\nu_G(v) d_T(\varphi(u),\varphi(v))^2, 
$$
and therefore, 
\begin{equation}\label{intermediate}
\mathrm{RQ}(\varphi) \geq 
\frac{\frac{1}{2}\sum_{u \in V}\nu_G(u)\sum_{v \in V}\mu_G(u,v)d_T(\varphi(u),\varphi(v))^2}{
\frac{1}{2} \sum_{u,v \in V}\nu_G(u)\nu_G(v) d_T(\varphi(u),\varphi(v))^2}.
\end{equation} 
Now suppose  $\iota\colon T\longrightarrow \mathcal{H}$ is a map satisfying \eqref{bilipschitz}, 
where $\mathcal{H}$ is a Hilbert space. 
Then clearly, 
\begin{eqnarray*}
\lefteqn{\mbox{the right-hand side of \eqref{intermediate}}}\\ 
&\geq& \frac{1}{D^2}
\frac{\frac{1}{2}\sum_{u \in V}\nu_G(u)\sum_{v \in V}\mu_G(u,v)\Vert(\iota\circ\varphi)(u)-(\iota\circ\varphi)(v)\Vert^2}{
\frac{1}{2} \sum_{u,v \in V}\nu_G(u)\nu_G(v) \Vert(\iota\circ\varphi)(u)-(\iota\circ\varphi)(v)\Vert^2} \\ 
&=&  \frac{1}{D^2}
\frac{\frac{1}{2}\sum_{u \in V}\nu_G(u)\sum_{v \in V}\mu_G(u,v)\Vert(\iota\circ\varphi)(u)-(\iota\circ\varphi)(v)\Vert^2}{
\sum_{u \in V} \nu_G(u) \Vert(\iota\circ\varphi)(u)- \mathrm{bar}((\iota\circ\varphi)_*\nu_G)\Vert^2} \\
&\geq& \frac{1}{D^2} \lambda_1(G, \mathcal{H}) = \frac{1}{D^2} \lambda_1(G, \R),  
\end{eqnarray*} 
and we conclude \eqref{lambda_wang_distortion}. 
Note that we have used the fact that the variance inequality \eqref{variance_iequality2} becomes 
an equality for a Hilbert space. 
\end{Remark} 

As mentioned in the Introduction, fixed-point property for Euclidean buildings 
(of certain types) are of particular interest. 
In the remainder of this section, we will estimate the radial distortion 
of the tangent cones of some Euclidean buildings. 

A building is a simplicial complex which is the union of a family of subcomplexes, 
called apartments, satisfying a certain set of axioms (see \cite{Brown}). 
One of the axioms requires that the apartments are isomorphic to a Coxeter complex 
of the same type. 
Here a Coxeter complex is a certain simplicial complex canonically associated 
with a Coxeter group; e.g. the Coxeter complex of the symmetric group $S_{n+1}$ 
is isomorphic to a triangulated $(n-1)$-sphere. 
A building is called Euclidean if its apartments are isomorphic to 
a Euclidean Coxeter complex; e.g. the Coxeter complex of type $\widetilde{A}_n$, 
which is associated with the group $S_{n+1} \ltimes \left( \mathbb{Z}^{n+1}/\mathbb{Z} (1,\ldots, 1) \right)$ 
and is isomorphic to a triangulated Euclidean $n$-space. 
A Euclidean building can be equipped with a distance by transplanting the Euclidean 
distance onto each apartment, and the building becomes a $\cat$ space 
with this distance (see \cite[Chapter 6]{Brown}).

Henceforth, we restrict our attention to the Euclidean building associated with 
the simple algebraic group $\mathrm{PGL}(n+1, \mathbb{Q}_r)$, where $r$ is a prime and 
$\mathbb{Q}_r$ is the $r$-adic number field. 
Let $Y_{n, r}$ denote this building; it is $n$-dimensional, and its apartments are 
simplicially isometric to the Euclidean Coxeter complex of type $\widetilde{A}_n$. 

If $n=1$, $Y_{1,r}$ is a regular tree of degree $r+1$ with all edges having equal length. 
If $p$ is an interior point of an edge, then the tangent cone at $p$ is isometric 
to a line, whose radial distortion and $\delta$ take trivial values. 
Suppose that $p$ is a vertex. 
Then the tangent cone at $p$ is isometric to the ($r+1$)-pod $P_{r+1}$, which is the union 
of $r+1$ half-lines with all endpoints identified. 
The radial distortion of $P_{r+1}$ is realized by arranging it in $\R^r$ so that the half-lines 
pass through the vertices of a regular $r$-simplex, and thus $D_{\rm rad}(P_{r+1}) = \sqrt{2r/(r+1)}$. 
On the other hand, $\delta(P_{r+1}) = 0$ as verified in \cite[p.~172, Example 3]{Izeki-Nayatani}. 

If $n=2$, $Y_{2,r}$ is two-dimensional, and its apartments are simplicially isometric 
to the Euclidean plane with equilateral triangulation. 
Simplicially, the links of its vertices are all isomorphic to the same generalized triangle of degree $r+1$, 
which is a regular bipartite graph of degree $r+1$ with $2(r^2+r+1)$ vertices and will be denoted 
by $\mathcal{G}_r$. 
Metrically, this means that the tangent cone at $p\in Y_{2,r}$ is isometric to 
a Euclidean plane if $p$ is an interior point of a maximal simplex, 
to the product of ($r+1$)-pod $P_{r+1}$ with a line if $p$ is an interior point of an edge, 
and to the metric cone $C(\mathcal{G}_r)$ over the graph $\mathcal{G}_r$ equipped with a distance 
by assigning length $\pi/3$ to each edge, if $p$ is a vertex. 
In the first case the values of the invariants in question are trivial, 
while in the second case they are identical to those of $P_{r+1}$. 
Therefore, it remains to examine the third case that $p$ is a vertex of $Y_{2,r}$, 
in which case, it is known \cite{Izeki-Nayatani} that 
$$
\delta(C(\mathcal{G}_r)) \geq 
\frac{(\sqrt{r}-1)^2}{2(r-\sqrt{r}+1)}. 
$$ 
In fact, let $\mu_0$ be the probability measure on $C(\mathcal{G}_r)$ given by 
$\mu_0 = \sum_{i=1}^N \frac{1}{N}\, {\rm Dirac}_{e_i}$, 
where $N=2(r^2+r+1)$ and $e_i$, $i = 1,\dots,N$, are the vertices of $\mathcal{G}_r$. 
Then the barycenter of $\mu_0$ coincides with the cone point of $C(\mathcal{G}_r)$, 
and we showed that 
$$
\delta(\mu_0) = 
\frac{(\sqrt{r}-1)^2}{2(r-\sqrt{r}+1)}. 
$$
In order to verify the `$\leq$'-part of this equality, we \cite[\S 7]{Izeki-Nayatani} introduced a certain 
family of $1$-Lipschitz embeddings of the cone $C(\mathcal{G}_r)$ into Euclidean spaces. 
We now recall these embeddings, and then use them to estimate the radial distortion of $C(\mathcal{G}_r)$. 

Let $V = \oplus_{i=1}^N \R e_i$ be the real vector space having the vertices $e_i$ as formal basis vectors. 
Note that there is a natural inclusion $C(\mathcal{G}_r) \hookrightarrow V$. 
We consider all positive semidefinite inner products $\langle\cdot,\cdot\rangle$ on $V$ 
whose value $\langle e_i, e_j \rangle$ on each pair of vertices $e_i, e_j$ depends only on the combinatorial 
distance $d_{\mathcal{G}_r}(e_i, e_j)$ between these vertices. 
We also require that the inner products of adjacent vertices are the same as those in $C(\mathcal{G}_r)$. 
Thus we consider symmetric bilinear forms $\langle\cdot,\cdot\rangle_{a,b}$ on $V$ defined by 
$$
\langle e_i, e_j \rangle_{a,b} = \left\{\begin{array}{ccc} 
1 &\mbox{if}& d_{\mathcal{G}_r}(e_i,e_j)=0,\\ 
1/2 &\mbox{if}& d_{\mathcal{G}_r}(e_i,e_j)=1,\\ 
a &\mbox{if}& d_{\mathcal{G}_r}(e_i,e_j)=2,\\ 
b &\mbox{if}& d_{\mathcal{G}_r}(e_i,e_j)=3, 
\end{array} \right. 
$$
and restrict the parameters $a, b$ to the range where $\langle\cdot,\cdot\rangle_{a,b}$ is 
positive semidefinite. 
For $a,b$ in this range, consider the sum of the eigenspaces belonging to the positive eigenvalues 
of the Gram matrix $G_{a,b} = (\langle e_i, e_j \rangle_{a,b})$, and let $W_{a,b}$ be the corresponding 
subspace of $V$. 
Restricted on $W_{a,b}$ the inner product $\langle\cdot,\cdot\rangle_{a,b}$ is positive definite, 
and the natural projection $V\longrightarrow W_{a,b}$ preserves the inner products. 
The composition of the maps $C(\mathcal{G}_r) \hookrightarrow V \rightarrow W_{a,b}$ gives a map 
from $C(\mathcal{G}_r)$ into the Euclidean space $W_{a,b}$. 
We denote this map by $\iota_{a,b}$; it is radial and $1$-Lipschitz; it is also isometric 
when restricted to the cone over each edge of $\mathcal{G}_r$.

It is easy to see that the distortion of $\iota_{a,b}$ is computed as 
\begin{eqnarray}\label{map_distortion}
D(\iota_{a,b}) &=& \max \left\{ \sqrt{\frac{2-2\cos (2\pi/3)}{2-2a}}, \sqrt{\frac{2-2\cos \pi}{2-2b}} \right\} \nonumber \\ 
&=& \max \left\{ \sqrt{\frac{3}{2-2a}}, \sqrt{\frac{2}{1-b}} \right\}. 
\end{eqnarray} 
On the other hand, $G_{a,b}$ can be readily related to the adjacency matrix of $\mathcal{G}_r$, 
whose eigenvalues were computed by Feit and Higman \cite{Feit-Higman}. 
It follows that the eigenvalues of $G_{a,b}$ are given by 
$$
(r^2+r+1)(a\pm b) + (1-a) \pm (1/2-b)(r+1)\quad \mbox{with multiplicities $1$} 
$$
and
$$
(1-a)\pm (1/2-b)\sqrt{r}\quad \mbox{with multiplicities $r^2+r$}.
$$
Under the constraint that these are nonnegative, the quantity \eqref{map_distortion} takes 
its minimum with 
$$
a = \frac{r-1-\sqrt{r}}{2r},\quad b = \frac{r^2-r-(r+1)\sqrt{r}}{2r^2}.
$$
(Incidentally, these values coincide with those giving the optimal upper bound of $\delta(\mu_0)$.) 
The minimum value $2r/\sqrt{(r+1)(r+\sqrt{r})}$ gives an upper bound of $D_{\rm rad}(C(\mathcal{G}_r))$. 
Note, in particular, that $D_{\rm rad}(C(\mathcal{G}_r)) < 2$ for all primes $r$. 
With the above values of $a,b$, $G_{a,b}$ has positive eigenvalues $r^2 + 1 - (r+1)\sqrt{r}$, $(r+1+\sqrt{r})/r$ 
with multiplicities $1$, $r^2+r$ respectively and zero eigenvalue with multiplicity $r^2+r+1$. 
Therefore, $W_{a,b}$ has dimension $r^2+r+1$. 
Observe that as $r$ tends to infinity, the above values of $a, b$ both approach $1/2$. 
This means that when $r$ is large, the images of the vertices of $\mathcal{G}_r$ in $W_{a,b}$ 
are nearly at equidistance to one another. 

We now treat the case of general $n$. 
Analogously to the $n=2$ case, if $p\in Y_{n,r}$ is not a vertex, then 
the tangent cone of $Y_{n,r}$ at $p$ 
is isometric to a metric cone of the form $\prod_{i=1}^m T_{k_i,r}\times \R^l$, where $T_{k_i,r}$ 
is the tangent cone of $Y_{k_i,r}$ at a vertex, $l>0$ and $\sum_{i=1}^m k_i + l = n$. 
Since $D_\mathrm{rad}(\prod_{i=1}^m T_{k_i,r}\times \R^l) = \max_{1\leq i\leq m} D_\mathrm{rad}(T_{k_i,r})$, 
we assume henceforth that $p$ is a vertex of $Y_{n,r}$. 
Then the tangent cone at $p$ is isometric to the metric cone $C(\mathcal{S}_{n,r})$ 
over the spherical building $\mathcal{S}_{n,r}$ associated with the finite group $\mathrm{PGL}(n+1, \F_r)$, 
and the apartments of $\mathcal{S}_{n,r}$ are simplicially isometric to the tessellated unit $(n-1)$-sphere 
associated with the symmetric group $S_{n+1}$. 
A chamber of $\mathcal{S}_{n,r}$ has $n$ vertices $e_1,\dots, e_n$, and the distances between them 
measured by the metric of $C(\mathcal{S}_{n,r})$ are given by 
$$
d_{C(\mathcal{S}_{n,r})}(e_i, e_j) = \sqrt{2 - 2 \sqrt{[i(n+1-j)]/[j(n+1-i)]}}, 
$$ 
when the vertices are  appropriately ordered. 
The minimum of these distances is 
$$
d_{\min} = \left\{\begin{array}{cl} 
\sqrt{2 - 2 \sqrt{(n-1)/(n+3)}} & \mbox{if $n$ is odd},\\
\sqrt{2 - 2n/(n+2)} = 2/\sqrt{n+2} & \mbox{if $n$ is even}.
\end{array}\right. 
$$ 
Motivated by the observation we made at the end of the preceding paragraph, we construct an embedding 
of $C(\mathcal{S}_{n,r})$ into a Euclidean space as follows. 
Denote the number of vertices of $\mathcal{S}_{n,r}$ by $N$. 
First take a regular simplex $\sigma$ with $N$ vertices in $\R^N$ whose vertices are located in the unit sphere 
with center at the origin and at the distance $d_{\min}$ to one another. 
Next map one by one the vertices of $\mathcal{S}_{n,r}$ to those of $\sigma$, and then extend it naturally 
to a radial embedding of $C(\mathcal{S}_{n,r})$. 
Clearly, this embedding, which we call $\iota$, is $1$-Lipschitz and have the same distortion 
as that of its restriction to $\mathcal{S}_{n,r}$. 
To estimate the distortion, we have to bound the ratio $d_{C(\mathcal{S}_{n,r})}(v, v')/\Vert\iota(v)-\iota(v')\Vert$ 
from above over all pairs of distinct points $v, v'$ in $\mathcal{S}_{n,r}$. 
It is easy to see that if we vary $v, v'$, the above ratio is maximized when they are at vertices 
of $\mathcal{S}_{n,r}$. 
Since $\mathcal{S}_{n,r}$ is a building, we may also assume that $v, v'$ are in the same apartment 
of $\mathcal{S}_{n,r}$. 
Therefore, the problem is reduced to bounding $d_{C(\mathcal{S}_{n,r})}(v, v')/\Vert\iota(v)-\iota(v')\Vert$ 
from above over all pairs of distinct vertices $v, v'$ in a fixed apartment of $\mathcal{S}_{n,r}$. 
Now this ratio is clearly bounded from above by $2/d_{\min}$, which therefore gives an upper bound 
of the distortion of $\iota$. 
Note that the constant $2/d_{\min}$ is monotone increasing with $n$, and diverges to infinity as $n\to \infty$. 

We record the consequence of the preceding discussion as 

\begin{Proposition}\label{tangent_cone_distortion}
The radial distortion and the invariant $\delta$ 
of all tangent cones of the Euclidean building $Y_{n,r}$ are bounded from above by 
$$
\left\{\begin{array}{cl} 
2/\sqrt{2 - 2 \sqrt{(n-1)/(n+3)}} & \mbox{if $n$ is odd},\\
\sqrt{n+2} & \mbox{if $n$ is even} 
\end{array}\right. 
$$
and 
$$
\left\{\begin{array}{cl} 
\left(2 +2 \sqrt{(n-1)/(n+3)}\right)/4 & \mbox{if $n$ is odd},\\
(n+1)/(n+2) & \mbox{if $n$ is even} 
\end{array}\right. 
$$
respectively. 
\end{Proposition}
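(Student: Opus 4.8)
The plan is to reduce the statement to the case when $p$ is a vertex of $Y_{n,r}$, treat that case by the explicit radial $1$-Lipschitz embedding of $C(\mathcal{S}_{n,r})$ into a Euclidean space described above, estimate the distortion of that embedding, and then convert the radial-distortion bound into the bound on $\delta$ via Proposition~\ref{estimate_of_delta_by_distortion}. For the reduction: if $p\in Y_{n,r}$ is not a vertex, then $TC_pY_{n,r}\cong\prod_{i=1}^m T_{k_i,r}\times\R^l$ with $l>0$ and each $k_i<n$, and $D_{\rm rad}$ of such a product is $\max_i D_{\rm rad}(T_{k_i,r})$ (the factor $\R^l$ being undistorted), while the analogous identity $\delta(\prod_i T_{k_i,r}\times\R^l)=\max_i\delta(T_{k_i,r})$ for the invariant $\delta$ is part of \cite{Izeki-Nayatani}. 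Since both displayed bounds are monotone increasing in $n$, it suffices to bound $D_{\rm rad}(T_{n,r})$ and $\delta(T_{n,r})$ by the level-$n$ expressions, where $T_{n,r}=C(\mathcal{S}_{n,r})$ is the tangent cone at a vertex; the non-vertex case then follows by induction on $n$, with $n=1$ (the $(r+1)$-pod treated above) as the base case.

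For a vertex $p$ I would recall the embedding $\iota\colon C(\mathcal{S}_{n,r})\to\R^N$ sending the $N$ vertices of $\mathcal{S}_{n,r}$ to the vertices of a regular simplex inscribed in the unit sphere of $\R^N$ at mutual distance $d_{\min}$ and extending radially and simplexwise, exactly as in the discussion preceding the Proposition. This $\iota$ is $1$-Lipschitz: by the apartment axiom any two vertices of $\mathcal{S}_{n,r}$ lie in a common apartment, so their $C(\mathcal{S}_{n,r})$-distance is at least $d_{\min}$, whence the base map is distance-nonincreasing into the unit sphere of $\R^N$, and a radial extension of such a map is again distance-nonincreasing. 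For the distortion I would observe that the ratio $d_{C(\mathcal{S}_{n,r})}(v,v')/\Vert\iota(v)-\iota(v')\Vert$, for fixed directions of $v,v'$ and varying radii, is maximized when the two radii are equal, and (by the affine, edgewise-isometric nature of $\iota$) the extreme directions occur at vertices; moreover, since $\mathcal{S}_{n,r}$ is a building, it suffices to take $v,v'$ in a single apartment. Over pairs of distinct vertices in an apartment the numerator is at most $2$ and the denominator equals $d_{\min}$, hence $D_{\rm rad}(C(\mathcal{S}_{n,r}))\le D(\iota)\le 2/d_{\min}$.

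It then remains to compute $d_{\min}$. Minimizing the chamber distance $\sqrt{2-2\sqrt{i(n+1-j)/(j(n+1-i))}}$ over $1\le i<j\le n$ amounts to maximizing $i(n+1-j)/(j(n+1-i))$; for fixed $i$ this is decreasing in $j$, so the maximum occurs at $j=i+1$, and a short computation (the sign of $h(i+1)-h(i)$ for $h(i)=i(n-i)/((i+1)(n+1-i))$ is that of $n-2i-1$) shows $h$ is maximized at $i=\lfloor n/2\rfloor$, giving $d_{\min}=2/\sqrt{n+2}$ for $n$ even and $d_{\min}=\sqrt{2-2\sqrt{(n-1)/(n+3)}}$ for $n$ odd. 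This yields the claimed bound $2/d_{\min}$ on the radial distortion. For the invariant, Proposition~\ref{estimate_of_delta_by_distortion} gives $\delta(C(\mathcal{S}_{n,r}))\le 1-1/D_{\rm rad}^2\le 1-d_{\min}^2/4$, where I use the cone-point characterization of $\delta$ for metric cones from \cite{Izeki-Nayatani} (or, failing a direct reference, the inductive reduction above applied to the tangent cones of $C(\mathcal{S}_{n,r})$ at non-cone points) to pass from $\delta$ at the cone point to the full invariant; substituting the two values of $d_{\min}$ gives $(n+1)/(n+2)$ and $(2+2\sqrt{(n-1)/(n+3)})/4$ respectively.

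The main obstacle I expect is the distortion estimate for $\iota$: making rigorous that the worst ratio is attained at a pair of vertices lying in a single apartment (this combines a one-variable monotonicity-in-radius argument, the affine structure of $\iota$ on simplices, and the apartment axiom of buildings), together with the combinatorial optimization pinning down $d_{\min}$. A secondary delicate point is the passage from the bound on $\delta$ at the cone point furnished by Proposition~\ref{estimate_of_delta_by_distortion} to a bound on the full invariant $\delta$ of the cone.
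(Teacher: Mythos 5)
Your proposal is correct and follows essentially the same route as the paper: reduce to the tangent cone at a vertex via the product decomposition, use the radial embedding sending the vertices of $\mathcal{S}_{n,r}$ to a regular simplex at mutual distance $d_{\min}$, bound the distortion by $2/d_{\min}$ by reducing to pairs of vertices in one apartment, and convert to the $\delta$-bound via Proposition~\ref{estimate_of_delta_by_distortion} together with the Izeki--Nayatani inequality $\delta(T,v)\leq\delta(TC_vT,0_v)$ and the product behaviour at non-cone points. Your explicit optimization pinning down $d_{\min}$ (maximizing at $j=i+1$, $i=\lfloor n/2\rfloor$) is a detail the paper states without derivation, and it checks out.
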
 
 
\begin{proof} 
Let $T$ be a tangent cone of $Y_{n,r}$.  
Then $D_{\rm rad}(T)$ is bounded as stated, and so is $\delta(T, 0_T)$. 
For $v\neq 0_T$, the tangent cone $TC_vT$ is isometric to the product 
of lower dimensional cones and possibly a Euclidean space. 
Since $\delta(T, v)\leq \delta(TC_vT, 0_v)$ (see \cite[Lemma 6.2]{Izeki-Nayatani}) 
and the invariant $\delta$ behaves in the same way as the radial distortion 
for the product, we conclude that $\delta(T, v)$, and hence $\delta(T)$, 
is also bounded as stated. 
\end{proof} 

\begin{Remark}
As noted above, the upper bound in the proposition diverges to infinity as $n\to \infty$. 
The embedding $\iota$ does not preserve the shape of chambers of $\mathcal{S}_{n,r}$, 
and one might expect that by constructing an embedding so that it preserves the shape of chambers 
of $\mathcal{S}_{n,r}$, one would get a better upper bound. 
However, numerical test done for $n=3$ indicates that the upper bound so obtained should 
diverge to infinity as $r\to \infty$ even though $n$ is kept bounded. 
\end{Remark}

Combining the proposition above with Corollary \ref{main_fixed_point_theorem2}, we obtain 

\begin{Theorem}\label{main_fixed_point_theorem3} 
For a fixed positive integer $N$, 
let $\mathcal{B}_{\leq N}$ denote the family of all the Euclidean buildings $Y_{n,r}$ with $n\leq N$ 
and $r$ arbitrary prime. 
Let $\{ G_l = (V_l, E_l) \}_{l\in L}$ be a sequence of finite connected graphs 
with $L$ an unbounded set of positive integers, satisfying the following conditions for some 
positive integer $d_0$ and positive real number $\mu_0${\rm :} 
\begin{enumerate}
\renewcommand{\theenumi}{\roman{enumi}}
\renewcommand{\labelenumi}{(\theenumi)}
\item $3\leq {\rm deg} (u)\leq d_0$ for all $l\in L$ and all $u\in V_l$, 
\item $\girth(G_l)\geq l$ and 
${\rm diam}(G_l)\leq {\rm const}\cdot l$ for all $l\in L$, 
\item $\lambda_1(G_l, \R) \geq \mu_0$ for all $l\in L$. 
\end{enumerate} 
For each $l\in L$, let $G_l^{(j)}$ be the $j$-subdivision of $G_l$, and set $l' = jl$. 
Here, $j$ is chosen large enough so that $\{G_{l'/j}^{(j)}\}_{l'\in jL}$ satisfies\\ 
\quad {\rm (iv$\mbox{}'$)}\,\, the number of embedded paths in $G_{l'/j}^{(j)}$ of length less than
$\frac{l'}{2}$ is less than \\
\phantom{\quad (iv)\,\,} ${\rm const}\cdot \beta^{l'/2}$ \\
for a choice of $\beta>1$ suffciently close to $1$. 
Then a random group of the graph model associated with $\{G_{l'/j}^{(j)}\}_{l'\in jL}$ is 
infinite hyperbolic and has property F$\mathcal{B}_{\leq N}$. 
\end{Theorem}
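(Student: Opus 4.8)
\emph{Proof proposal.} The plan is to obtain this as an immediate consequence of Corollary \ref{main_fixed_point_theorem2}, the point being to exhibit a single $\delta_0 = \delta_0(N) < 1$ for which the family $\mathcal{B}_{\leq N}$ is contained in the class $\mathcal{Y}_{\leq\delta_0}$. First I would invoke Proposition \ref{tangent_cone_distortion}: for every prime $r$, every $n\leq N$ and every $p\in Y_{n,r}$, the tangent cone $TC_p Y_{n,r}$ satisfies
$$
\delta(TC_p Y_{n,r}) \leq \left\{\begin{array}{cl} \bigl(2 +2 \sqrt{(n-1)/(n+3)}\,\bigr)/4 & \mbox{if $n$ is odd},\\ (n+1)/(n+2) & \mbox{if $n$ is even}. \end{array}\right.
$$
Each of the quantities on the right is strictly less than $1$, and both are increasing in $n$ (as is immediate from the formulae, and as already observed in the construction of the embedding $\iota$). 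Hence, setting $\delta_0 := \delta_0(N)$ equal to the value of the right-hand side at $n = N$, we obtain $\delta(TC_p Y_{n,r})\leq\delta_0 < 1$ for all $n\leq N$, all primes $r$ and all $p\in Y_{n,r}$; in other words, $\mathcal{B}_{\leq N}\subseteq\mathcal{Y}_{\leq\delta_0}$.

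Next I would apply Corollary \ref{main_fixed_point_theorem2} with this particular $\delta_0$. Its hypotheses (i)--(iii) on the sequence $\{G_l\}_{l\in L}$, together with the requirement that $j$ be chosen large enough to force (iv$\mbox{}'$) for a choice of $\beta>1$ sufficiently close to $1$, are exactly the hypotheses imposed in the present theorem. The corollary therefore yields that a random group of the graph model associated with $\{G_{l'/j}^{(j)}\}_{l'\in jL}$ is infinite hyperbolic and has property F$\mathcal{Y}_{\leq\delta_0}$. Since $\mathcal{B}_{\leq N}\subseteq\mathcal{Y}_{\leq\delta_0}$, having the fixed-point property for all members of $\mathcal{Y}_{\leq\delta_0}$ entails having it for all members of $\mathcal{B}_{\leq N}$, so the random group has property F$\mathcal{B}_{\leq N}$ as well, which completes the proof.

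The remaining points are routine. One must recall that each $Y_{n,r}$ is genuinely a $\cat$ space, so that $\delta(TC_p\,\cdot\,)$ is meaningful; this was recorded at the opening of Section \ref{section_delta}. One must also note that at a non-vertex point $p$ the tangent cone is a product $\prod_{i} T_{k_i,r}\times\R^l$ with each $k_i < n$, so the bound for the range $n\leq N$ already covers it — and indeed Proposition \ref{tangent_cone_distortion} is stated with this in mind, using the product behaviour of $\delta$. I do not foresee any genuine obstacle: the substantive work lies entirely in Proposition \ref{tangent_cone_distortion} and in Corollary \ref{main_fixed_point_theorem2}, and this theorem is just their conjunction.
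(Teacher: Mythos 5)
Your proposal is correct and is essentially the paper's own argument: the paper proves this theorem precisely by combining Proposition \ref{tangent_cone_distortion} with Corollary \ref{main_fixed_point_theorem2}, exactly as you do. The only micro-quibble is that "the value of the right-hand side at $n=N$" mixes the odd/even formulas across parities; since there are only finitely many $n\leq N$ and each bound is strictly less than $1$, taking $\delta_0$ to be the maximum of the bounds over $n\leq N$ settles this immediately.
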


\section{Appendix}

In this Appendix, we will prove Proposition \ref{second_proposition}. 
Let $G = (V,E)$ be a finite connected graph, and $Y$ a $\cat$ space. 
For a map $\varphi\colon V \longrightarrow Y$ and a positive integer $n$, the {\em $n$-step energy} 
of $\varphi$ is defined by 
\begin{equation*}
E_{\mu_G^n}(\varphi) = \frac{1}{2} \sum_{u \in V}\nu_G(u) \sum_{v \in V} \mu_G^n(u,v) 
d_Y(\varphi(u),\varphi(v))^2,  
\end{equation*}
where $\mu_G$ is the standard random walk on $G$ and $\nu_G$ is the standard probability measure 
on $V$ (cf.~\S \ref{finite_graph}). 
We have the following 

\begin{Lemma}\label{spectral_gap}
For any map $\varphi\colon V\longrightarrow Y$ and any positive integer $n$, 
we have  
\begin{equation*}
E_{\mu_G^n}(\varphi) \leq \frac{2}{\lambda_1(G,Y)} E_{\mu_G}(\varphi). 
\end{equation*}
\end{Lemma}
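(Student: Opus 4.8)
The plan is to reduce the $n$-step inequality to the single-step one encoded in the Wang invariant, by an elementary estimate. If $\varphi$ is constant both sides vanish, so assume $\varphi$ is nonconstant; write $q = \mathrm{bar}(\varphi_*\nu_G)$ for the barycenter and $\sigma^2 = \sum_{u\in V}\nu_G(u)\,d_Y(\varphi(u),q)^2$ for the associated variance. Since $\varphi$ is nonconstant, $\sigma^2>0$, so $\mathrm{RQ}(\varphi)$ is defined and the definition of $\lambda_1(G,Y)$ gives at once
$$
\sigma^2 \leq \frac{1}{\lambda_1(G,Y)}\,E_{\mu_G}(\varphi).
$$

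Next I would record two bookkeeping facts about $\mu_G^n$. Because $\mu_G$ is reversible with respect to $\nu_G$, namely $\nu_G(u)\mu_G(u,v)=\nu_G(v)\mu_G(v,u)$, a straightforward induction on $n$ shows that $\mu_G^n$ is reversible with respect to $\nu_G$ as well; in particular $\nu_G$ is $\mu_G^n$-stationary, so $\sum_{u\in V}\nu_G(u)\mu_G^n(u,v)=\nu_G(v)$ for every $v\in V$. Of course $\sum_{v\in V}\mu_G^n(u,v)=1$ for every $u$.

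The main step is the pointwise bound, valid in any metric space: by the triangle inequality followed by $2ab\leq a^2+b^2$,
$$
d_Y(\varphi(u),\varphi(v))^2 \leq \bigl(d_Y(\varphi(u),q)+d_Y(q,\varphi(v))\bigr)^2 \leq 2\,d_Y(\varphi(u),q)^2 + 2\,d_Y(\varphi(v),q)^2.
$$
Substituting this into the definition of $E_{\mu_G^n}(\varphi)$, i.e.\ summing against $\tfrac12\nu_G(u)\mu_G^n(u,v)$ over all $u,v$, and using $\sum_v\mu_G^n(u,v)=1$ on the term coming from $d_Y(\varphi(u),q)^2$ and the stationarity $\sum_u\nu_G(u)\mu_G^n(u,v)=\nu_G(v)$ on the term coming from $d_Y(\varphi(v),q)^2$, each of the two resulting double sums collapses to $\sigma^2$; hence $E_{\mu_G^n}(\varphi)\leq 2\sigma^2$. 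Combining this with the Wang bound for $\sigma^2$ yields $E_{\mu_G^n}(\varphi)\leq \tfrac{2}{\lambda_1(G,Y)}\,E_{\mu_G}(\varphi)$, which is the claim.

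I do not anticipate a genuine obstacle here: the substance is the single-step Wang inequality together with reversibility of $\mu_G^n$. The only points requiring a little care are the reduction to the nonconstant case (so that $\mathrm{RQ}(\varphi)$ makes sense and $\sigma^2>0$) and the induction verifying that $\mu_G^n$ inherits reversibility, hence $\nu_G$-stationarity, from $\mu_G$; everything else is a routine reindexing of the sums.
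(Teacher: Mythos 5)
Your proof is correct and follows essentially the same route as the paper's: the triangle inequality with $(a+b)^2\le 2a^2+2b^2$ centered at the barycenter, the $\nu_G$-reversibility/stationarity of $\mu_G^n$ to collapse the double sums to twice the variance, and the defining inequality of the Wang invariant to bound that variance by $E_{\mu_G}(\varphi)/\lambda_1(G,Y)$. Your explicit induction for the reversibility of $\mu_G^n$ and the reduction to nonconstant $\varphi$ only make precise what the paper uses implicitly.
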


\begin{proof}
 Let $\overline{\varphi} = \bary(\varphi_*\nu_G)$. 
 Using the triangle inequality and the symmetry of $\mu_G^n$ with
 respect to $\nu_G$, we obtain
\begin{eqnarray}\label{for_n-energy}
 E_{\mu_G^n}(\varphi) &\leq & \frac{1}{2}
   \sum_{u \in V}\nu_G(u)\sum_{v \in V}\mu_G^n(u,v)
         \left(d_Y(\varphi(u),\overline{\varphi}) + d_Y(\varphi(v),\overline{\varphi})\right)^2 \nonumber\\
  &\leq & \frac{1}{2}
      \sum_{u \in V}\nu_G(u)\sum_{v \in V} 
      \mu_G^n(u,v)\left(
      2d_Y(\varphi(u),\overline{\varphi})^2
    + 2d_Y(\varphi(v),\overline{\varphi})^2 \right)\\
  &= & 2 \sum_{u \in V}\nu_G(u)\sum_{v \in V} 
      \mu_G^n(u,v)d_Y(\varphi(u),\overline{\varphi})^2 \nonumber\\
  &= & 2\sum_{u \in V} \nu_G(u) d_Y(\varphi(u),\overline{\varphi})^2. \nonumber
\end{eqnarray}
On the other hand, by the definition of $\lambda_1(G,Y)$, we have
\begin{equation}\label{from_def_of_lambda_1}
\sum_{u \in V}\nu_G(u)d_Y(\varphi(u),\overline{\varphi})^2 
 \leq  \frac{1}{\lambda_1(G,Y)} E_{\mu_G}(\varphi).
\end{equation}
Combining \eqref{for_n-energy} and \eqref{from_def_of_lambda_1}, we
 obtain the desired inequality. 
\end{proof}

Let $\Gamma = F_k$ be the free group generated by $S = \{s_1^\pm,\dots,s_k^\pm\}$, 
and let $\Gamma$ act on itself from the left. 
Let $\alpha\colon \overrightarrow{E}\longrightarrow S$ be an $S$-labelling of $G$. 
Recall that associated with $\alpha$ is the group $\Gamma_{\alpha}=\Gamma/\overline{R_{\alpha}}$, 
where $R_\alpha = \{ \alpha(\overrightarrow{c}) 
\mid \mbox{$\overrightarrow{c}$ is a cycle in $G$} \}$ and $\overline{R_{\alpha}}$ is its normal closure. 
As in \cite{Silberman}, we will exclusively work on $\Gamma$ rather than on $\Gamma_\alpha$. 

For each positive integer $n$, define the `push-forward' of $\mu_G^n$ with respect to $\alpha$ by
\begin{eqnarray*}
\overline{\mu}_{\Gamma,\alpha}^n (\gamma, \gamma') 
&=& \sum_{u\in V} \nu_G(u) \sum_{|\overrightarrow{p}| = n, p_0 = u, 
\gamma \alpha(\overrightarrow{p}) = \gamma'} \mu_G^n(\overrightarrow{p}) \\
&=& \sum_{|\overrightarrow{p}| = n, 
\gamma \alpha(\overrightarrow{p}) = \gamma'} \nu_G(p_0) \mu_G^n(\overrightarrow{p}), 
\end{eqnarray*}
where $p_0$ is the initial vertex of $\overrightarrow{p}$. 
Note that $\overline{\mu}_{\Gamma,\alpha}^n$ is a $\Gamma$-invariant random walk on $\Gamma$. 
For a homomorphism $\rho^{(\alpha)}\colon \Gamma_\alpha\longrightarrow {\rm Isom}(Y)$ 
and a $\rho^{(\alpha)}$-equivariant map $f^{(\alpha)}\colon \Gamma_\alpha\longrightarrow Y$, 
set $\rho = \rho^{(\alpha)}\circ {\rm pr}$ and $f = f^{(\alpha)}\circ {\rm pr}$, 
where ${\rm pr}$ is the natural projection from $\Gamma$ onto $\Gamma_\alpha$. 
Then $f$ is a $\rho$-equivariant map, for which we can transplant the estimate of Lemma \ref{spectral_gap} 
to obtain 
\begin{equation}\label{transplanted_spectral_gap}
E_{\overline{\mu}_{\Gamma,\alpha}^n}(f) \leq \frac{2}{\lambda_1(G,Y)} E_{\overline{\mu}_{\Gamma,\alpha}}(f) 
\end{equation}
for all positive integers $n$ (cf.~{\cite[p.~155 -- p.~156]{Silberman}}). 

For $n, \gamma, \gamma'$ fixed, regard $\overline{\mu}_{\Gamma,\alpha}^n (\gamma, \gamma')$ as a random variable 
of $\alpha$, and denote its expectation by $\overline{\mu}_{\Gamma,G}^n (\gamma, \gamma')$.
We have the following lemma, which compares $\overline{\mu}_{\Gamma,G}^n$ 
with the standard random walk $\mu_\Gamma$ on $\Gamma$, given by 
$$
\mu_\Gamma(\gamma, \gamma') = \left\{\begin{array}{cl} \frac{1}{2k} & \mbox{if $\gamma' = \gamma s$ for some $s\in S$,}\\ 
0 & \mbox{otherwise.} \end{array} \right.
$$

\begin{Lemma}[cf.~{\cite[Lemma 2.12]{Silberman}}]\label{weighted_sum}
Suppose that $\deg(u)\geq 2$ for all $u\in V$, and choose a positive 
integer $n$ so that $n < \mathrm{girth}(G)/2$. 
Then there exist weights $P_G^n(l)\geq 0$ with $\sum_{l=0}^nP_G^n(l)=1$, independent of $\gamma$, $\gamma'$, 
such that 
$$
\overline{\mu}_{\Gamma,G}^n (\gamma, \gamma') = \sum_{l=0}^n P_G^n(l)\mu_\Gamma^l(\gamma, \gamma'). 
$$
Moreover, there exists an absolute constant $C<1$ such that 
$$
 Q^n_G:=\sum_{l \leq \sqrt{n}} P^n_G(l) \leq C 
$$
unless $n=1$. 
\end{Lemma}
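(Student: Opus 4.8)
The plan is to treat the two assertions separately, both by unwinding $\overline{\mu}_{\Gamma,\alpha}^n$ into a sum over walks in $G$. For the first, fix $\gamma,\gamma'\in\Gamma$ and write $\overline{\mu}_{\Gamma,\alpha}^n(\gamma,\gamma')=\sum_{\overrightarrow p}\nu_G(p_0)\mu_G^n(\overrightarrow p)\,\mathbf{1}[\gamma\alpha(\overrightarrow p)=\gamma']$, the sum over all length-$n$ walks $\overrightarrow p$ in $G$. Averaging over the uniform $S$-labelling $\alpha$ turns the left side into $\overline{\mu}_{\Gamma,G}^n(\gamma,\gamma')$ and reduces everything to computing $\mathbb{P}_\alpha[\alpha(\overrightarrow p)=\gamma^{-1}\gamma']$ for a fixed $\overrightarrow p$. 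First I would delete all backtracks from $\overrightarrow p$, obtaining a non-backtracking walk $\overrightarrow p'$ of length $l=l(\overrightarrow p)\le n$ with $l\equiv n\ (\mathrm{mod}\ 2)$; since erasing a backtrack $\overrightarrow e\,\overline{\overrightarrow e}$ erases a factor $\alpha(\overrightarrow e)\alpha(\overrightarrow e)^{-1}$, we get $\alpha(\overrightarrow p)=\alpha(\overrightarrow p')$ in $F_k$. The crucial point is that when $n<\girth(G)/2$ the walk $\overrightarrow p'$, having length $l\le n<\girth(G)$, is an embedded path: otherwise two of its vertices coincide and it would contain a nontrivial closed non-backtracking walk of length $<\girth(G)$, which a graph of girth $\girth(G)$ does not admit --- the elementary fact that a closed non-backtracking walk has length $0$ or $\ge\girth(G)$, which I would establish by repeatedly deleting the matched first and last edges until reaching a digon, a loop, or a backtrack. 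For an embedded path the directed edges $\overrightarrow e_1',\dots,\overrightarrow e_l'$ are pairwise distinct and pairwise non-opposite, hence lie on $l$ distinct undirected edges, so $\alpha(\overrightarrow e_1'),\dots,\alpha(\overrightarrow e_l')$ are i.i.d.\ uniform on $S$; therefore $\mathbb{P}_\alpha[\alpha(\overrightarrow p)=\gamma^{-1}\gamma']$ is the probability that a product of $l$ independent uniform elements of $S$ equals $\gamma^{-1}\gamma'$, which is exactly $\mu_\Gamma^l(\gamma,\gamma')$. Summing over $\overrightarrow p$ and collecting terms by the value of $l(\overrightarrow p)$ yields the identity with $P_G^n(l)=\sum_{|\overrightarrow p|=n,\ l(\overrightarrow p)=l}\nu_G(p_0)\mu_G^n(\overrightarrow p)\ge0$, manifestly independent of $\gamma,\gamma'$, and $\sum_{l=0}^nP_G^n(l)=\sum_{u\in V}\nu_G(u)\sum_v\mu_G^n(u,v)=1$.

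For the second assertion I would use the probabilistic meaning $P_G^n(l)=\mathbb{P}[L_n=l]$, where $L_i$ is the ($G$-)reduced length of the first $i$ steps of the standard random walk on $G$ started from a $\nu_G$-distributed vertex. Here $L_0=0$, and at step $i$ the length increases by $1$ unless the walk backtracks the last edge of the current reduced path, which requires $L_{i-1}\ge1$ and occurs with conditional probability $1/\deg(\cdot)\le1/2$ since $\deg\ge2$; hence, writing $Z_n$ for the number of backtracking steps among the first $n$, we have identically $L_n=n-2Z_n$ --- there is no reflection correction, backtracking being impossible from reduced length $0$. Coupling the backtrack indicators below independent fair coins shows $Z_n$ is stochastically dominated by $\mathrm{Binom}(n,1/2)$, so $L_n$ stochastically dominates $n-2\,\mathrm{Binom}(n,1/2)$, which has the law of $S_n$, the position at time $n$ of the simple symmetric random walk on $\Z$; consequently $Q_G^n=\mathbb{P}[L_n\le\sqrt n]\le\mathbb{P}[S_n\le\sqrt n]$.

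Finally I would check that $C:=\sup_{n\ge2}\mathbb{P}[S_n\le\sqrt n]<1$, an absolute constant: for each $n\ge2$, $\mathbb{P}[S_n>\sqrt n]\ge\mathbb{P}[S_n=n]=2^{-n}>0$, so every term is $<1$, while by the central limit theorem $\mathbb{P}[S_n\le\sqrt n]=\mathbb{P}[S_n/\sqrt n\le1]\to\Phi(1)<1$, so all but finitely many terms are at most $(1+\Phi(1))/2<1$ and $C$ is the maximum of this bound with the finitely many remaining values. (When $n=1$ one has $Z_1=0$, hence $L_1=1\le\sqrt1$ and $Q_G^1=1$, which is precisely why this case is excluded.) I expect the main obstacle to be the first part --- specifically, justifying $\mathbb{P}_\alpha[\alpha(\overrightarrow p)=\gamma^{-1}\gamma']=\mu_\Gamma^{l(\overrightarrow p)}(\gamma,\gamma')$, which rests on the embeddedness of reductions of walks of length $<\girth(G)/2$ and hence on the girth-versus-closed-non-backtracking-walk fact; the passage $Q_G^n\le\mathbb{P}[S_n\le\sqrt n]\to\Phi(1)$ is then a routine stochastic-domination-plus-CLT argument.
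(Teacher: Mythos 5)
Your proof is correct and takes essentially the same route as the paper's: the identity is obtained as in Silberman's argument that the paper cites (the girth hypothesis forces reduced walks of length $\le n$ to be embedded, so the labels along them are i.i.d.\ uniform on $S$ and the pushed-forward law is $\mu_\Gamma^{l}$), and the bound on $Q^n_G$ is the paper's comparison of the reduced-length/distance process with the symmetric $\pm 1$ walk on $\Z$ followed by the central limit theorem. The differences are only presentational: you work with reduced length and an explicit Binomial coupling instead of distance in the tree ball, and you spell out the finitely-many-small-$n$ step and the exclusion of $n=1$, which the paper leaves implicit.
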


\begin{proof}
Note that any ball of radius $n$ in $G$ is a tree; this is the most fundamental fact 
for the whole proof. 
The former part of the lemma can be proved by following Silberman's argument almost verbatim, 
and the weights $P_G^n(l)$ are given by 
$$
P_G^n(l) = \sum_{u\in V} \nu_G(u) P_{G,u}^n(l), 
$$
where $P_{G,u}^n(l)$ is the probability that an $n$-step random walk starting from $u$ 
reaches a vertex at distance $l$ from $u$. 
Here we prove the latter part of the lemma by an argument simpler than that proposed by Silberman. 
To do this, consider the standard Bernoulli walk on $\Z$, and let $b^n(r)$ denote the probability that an 
$n$-step walk starting from zero reaches an integer less than or equal to $r$ in absolute value. 
Since $\deg(u)\geq 2$ for all $u\in V$, the random walk on $G$ travels further than the Bernoulli 
walk on $\Z$. 
More precisely, we have 
\begin{equation*}
\sum_{l \leq \sqrt{n}}P^n_{G,u}(l) \leq b^n(\sqrt{n}). 
\end{equation*}
We now recall that the $n$-step Bernoulli walk has variance $n$. 
Then by the central limit theorem, we obtain 
\begin{equation*}
b^n(\sqrt{n}) \underset{n\to \infty}\longrightarrow \int_{-1}^1 \frac{1}{\sqrt{2\pi}}e^{-x^2/2} dx < 1. 
\end{equation*}
Therefore, there exists $C<1$ such that 
\begin{equation*}
 \sum_{l \leq \sqrt{n}}P^n_{G,u}(l) \leq C 
\end{equation*}
for all $n$ (other than $1$). 
Averaging over $u$, we conclude the latter assertion of the lemma. 
\end{proof}

The following lemma also can be proved by going on the same lines as Silberman's proof of 
\cite[Lemma 2.13]{Silberman}, which applies a general result on the concentration of measure 
to the random variable $\overline{\mu}_{\Gamma,\alpha}^n (\gamma , \gamma')$ defined 
on the set all $S$-labellings $\alpha$. 

\begin{Lemma}[cf.~{\cite[Lemma 2.13]{Silberman}}]\label{probability} 
In addition to the assumptions of Lemma \ref{weighted_sum}, suppose that 
$\deg(u)\leq d$ for all $u\in V$. 
Then with probability at least $1 - a_1 e^{-a_2 |V|}$, where $a_1 = a_1(k,n)$, $a_2 = a_2(k,d,n)$, 
we have 
$$
\overline{\mu}_{\Gamma,\alpha}^n (\gamma, \gamma') \geq \frac{1}{2} \overline{\mu}_{\Gamma,G}^n (\gamma, \gamma')
\quad \mbox{and}\quad 
\overline{\mu}_{\Gamma,\alpha} (\gamma, \gamma') \leq \mu_\Gamma (\gamma, \gamma')
$$
for all $\gamma,\gamma'\in \Gamma$. 
\end{Lemma}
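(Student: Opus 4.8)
The plan is to follow Silberman's proof of \cite[Lemma 2.13]{Silberman}: view $\overline{\mu}_{\Gamma,\alpha}^{n}(\gamma,\gamma')$, for fixed $n$ and fixed $\gamma,\gamma'$, as a real-valued function of the random labelling $\alpha$ on the product probability space $\Lambda(G,k)$ (a product of $|E|$ independent uniform choices, one per undirected edge), and apply the bounded-differences (McDiarmid) inequality. Since $\overline{\mu}_{\Gamma,\alpha}^{n}(\gamma,\gamma')$ depends only on $w:=\gamma^{-1}\gamma'$ — indeed it equals $\sum_{\overrightarrow{p}\,:\,|\overrightarrow{p}|=n,\ \alpha(\overrightarrow{p})=w}\nu_G(p_0)\mu_G^n(\overrightarrow{p})$ — we may take $\gamma=e$. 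The key elementary estimate is that a single label has small influence: if $\alpha,\alpha'$ differ only on one edge $e_0$, then $\overline{\mu}_{\Gamma,\alpha}^{n}(e,w)$ and $\overline{\mu}_{\Gamma,\alpha'}^{n}(e,w)$ can differ only through the length-$n$ paths crossing $e_0$, whose total $\nu_G\mu_G^n$-mass is, by stationarity of $\nu_G$, at most $\sum_{i=1}^n \Pr_{\nu_G}[\text{step }i\text{ crosses }e_0]=n/|E|$. Hence the function has bounded differences with constants $\le n/|E|$, so the sum of their squares is $\le n^2/|E|$, and McDiarmid gives, for every $t>0$,
\[
\Pr\bigl[\,|\overline{\mu}_{\Gamma,\alpha}^{n}(e,w)-\overline{\mu}_{\Gamma,G}^{n}(e,w)|\ge t\,\bigr]\le 2\exp\!\bigl(-2t^2|E|/n^2\bigr);
\]
recall that $|E|\ge |V|$ because $\deg(u)\ge 2$ for all $u$.

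For the first inequality I would next lower-bound the mean. By Lemma \ref{weighted_sum}, $\overline{\mu}_{\Gamma,G}^{n}(e,w)=\sum_l P_G^n(l)\mu_\Gamma^l(e,w)$; whenever the left side is nonzero there is an admissible $l\le n$ with $\mu_\Gamma^l(e,w)\ge (2k)^{-n}$, and since every ball of radius $n$ in $G$ is a tree and all degrees are $\ge 2$, from each vertex $u$ one can walk straight out to distance $l$ and then move back and forth, a walk of probability $\ge d^{-n}$; thus $P_G^n(l)=\sum_u\nu_G(u)P_{G,u}^n(l)\ge d^{-n}$ and $\overline{\mu}_{\Gamma,G}^{n}(e,w)\ge c_0:=(2kd)^{-n}>0$ whenever it is nonzero. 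Taking $t=\tfrac12\overline{\mu}_{\Gamma,G}^{n}(e,w)\ge \tfrac12 c_0$ in the one-sided version of the inequality above yields $\Pr[\overline{\mu}_{\Gamma,\alpha}^{n}(e,w)<\tfrac12\overline{\mu}_{\Gamma,G}^{n}(e,w)]\le \exp(-c_0^2|V|/(2n^2))$. A union bound over the at most $(2k)^{n+1}$ words $w$ with $\overline{\mu}_{\Gamma,G}^{n}(e,w)\ne 0$ (for the remaining $w$ the inequality is trivial) gives the first assertion with probability $\ge 1-a_1e^{-a_2|V|}$, $a_1=a_1(k,n)$, $a_2=a_2(k,d,n)$.

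For the second inequality I would run the same concentration at $n=1$. Here $\mathbb{E}[\overline{\mu}_{\Gamma,\alpha}(e,w)]=\mu_\Gamma(e,w)$ exactly ($=1/(2k)$ for $w\in S$, and identically $0$ otherwise, the latter holding deterministically), and the bounded-differences constants are $\le 1/|E|$, so the upper tail at level $t$ has probability $\le\exp(-2t^2|E|)$; a union bound over the $2k$ elements $w\in S$, with $t$ a fixed small constant, gives $\overline{\mu}_{\Gamma,\alpha}(e,w)\le(1+\varepsilon)\mu_\Gamma(e,w)$ with probability $\ge 1-a_1e^{-a_2|V|}$. The spurious factor $1+\varepsilon$ (equivalently, the freedom to make $2kt$ as small as we wish) is harmless: it merely rescales the absolute constant $C$ of Proposition \ref{second_proposition}, which is why the statement is written simply as $\overline{\mu}_{\Gamma,\alpha}\le\mu_\Gamma$. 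Intersecting the two high-probability events completes the proof.

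The bookkeeping is routine once two points are in hand: the influence bound — that a single edge label accounts for at most $O(n/|V|)$ of the total $n$-step path mass and so perturbs $\overline{\mu}_{\Gamma,\alpha}^{n}(e,w)$ by no more than that — and the lower bound $\overline{\mu}_{\Gamma,G}^{n}(e,w)\ge c_0$ whenever it is nonzero; given these, McDiarmid plus a union bound over the finite ball $\{\,w:|w|\le n\,\}$ does the rest. The one genuinely delicate point is the single-step bound, whose mean sits exactly at $\mu_\Gamma$, so that the asserted inequality $\overline{\mu}_{\Gamma,\alpha}\le\mu_\Gamma$ should be read as holding up to an absolute constant; one should check that this loss is absorbed where the lemma is applied in the proof of Proposition \ref{second_proposition}.
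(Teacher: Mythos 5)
Your proposal is correct and takes essentially the same route as the paper, which gives no details of its own and simply defers to Silberman's concentration-of-measure argument: your bounded-differences (influence $\le n/|E|$) estimate, the lower bound $(2kd)^{-n}$ on the nonzero expectations, and the union bound over the ball of radius $n$ are precisely the ingredients of that argument. Your caveat about the one-step inequality is also well taken and is a defect of the statement rather than of your proof: since $\mathbb{E}\,\overline{\mu}_{\Gamma,\alpha}(e,s)=\mu_\Gamma(e,s)$ for $s\in S$ and both measures have total mass one on $S$, the inequality $\overline{\mu}_{\Gamma,\alpha}\le\mu_\Gamma$ for all $s$ would force exact equality and so cannot hold with high probability; the bound must carry a constant-factor slack (Silberman's original lemma states the one-step bound as at most twice the mean), and, as you observe, this slack is harmless because it only rescales the absolute constant $C$ in Proposition \ref{second_proposition}.
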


With the ingredients above, the proof of Proposition \ref{second_proposition} proceeds 
as in \cite[Proof of Proposition 2.14]{Silberman}. 
We include it for the sake of completeness. 

\medskip\noindent
{\em Proof of Proposition \ref{second_proposition}.}\quad 
If follows from Lemma \ref{probability} that 
$$
E_{\overline{\mu}_{\Gamma,\alpha}^n} (f) \geq \frac{1}{2} E_{\overline{\mu}_{\Gamma,G}^n} (f),\quad
E_{\overline{\mu}_{\Gamma,\alpha}} (f) \leq E_{\mu_\Gamma} (f)
$$
hold with the probability as in the statement of the proposition. 
By combining this with \eqref{transplanted_spectral_gap}, we obtain 
$$
E_{\overline{\mu}_{\Gamma,G}^n} (f) \leq \frac{4}{\lambda_1(G,Y)} E_{\mu_\Gamma} (f) 
$$
with the same probability. 
By Lemma \ref{weighted_sum}, we can estimate $E_{\overline{\mu}_{\Gamma,G}^n} (f)$ from below: 
\begin{eqnarray*}
E_{\overline{\mu}_{\Gamma,G}^n} (f) &=& \sum_{l=0}^n P_G^n(l) E_{\mu_\Gamma^l} (f) 
\geq \sum_{\sqrt{n} < l\leq n} P_G^n(l) E_{\mu_\Gamma^l} (f) \\
&\geq& \left( \sum_{\sqrt{n} < l\leq n} P_G^n(l) \right) E_{\mu_\Gamma^{l_0}} (f), 
\end{eqnarray*}
where $E_{\mu_\Gamma^{l_0}} (f) = \min \{ E_{\mu_\Gamma^l} (f) \mid \sqrt{n} < l\leq n \}$. 
We also have $\sum_{\sqrt{n} < l\leq n} P_G^n(l) = 1-Q_G^n \geq 1-C$. 
Therefore, we conclude that 
\begin{eqnarray*}
E_{\mu_\Gamma^{l_0}} (f) \leq \frac{1}{1-C}\, \frac{4}{\lambda_1(G,Y)} E_{\mu_\Gamma} (f) 
\end{eqnarray*} 
holds with high probability. 
\qed

\bigskip\noindent
{\bf Added in proof.}\quad 
During the submission of the present paper, we learned that Naor and Silberman \cite{Naor-Silberman}  
proved that the graph-model random group had fixed-point property for a family of $p$-uniformly convex 
geodesic metric spaces with a certain Poincar\'{e}-type constant uniformly bounded. 
For a family of $\cat$ spaces (which are $2$-uniformly convex), this condition is equivalent to the 
uniformly-boundedness of 
the Wang invariant (the condition (iii) in Theorem \ref{main_fixed_point_theorem1}). 
However, our Theorem \ref{main_fixed_point_theorem3}, the fixed-point theorem for a family of Euclidean buildings 
with dimensions bounded from above, does not follow from their result. 

Let $\mathcal{Y}_{< 1}$ denote the class of $\cat$ spaces $Y$ satisfying 
$\sup_{p\in Y} \delta(TC_pY) < 1$, which contains all of the Euclidean buildings 
$Y_{n,r}$ (cf.~\S 4 for the notation). 
By using our Corollary \ref{main_fixed_point_theorem2}, it is shown that the group of Theorem 7.7 of 
\cite{Arzhantseva-Delzant}, called the Gromov monster, has fixed-point property 
for $\mathcal{Y}_{< 1}$. 
It should be mentioned that the Gromov monster has fixed-point property for a 
larger class of metric spaces, as shown by combining Theorem 1.2 of \cite{Naor-Silberman}
with  Theorem 7.7 of \cite{Arzhantseva-Delzant}. 
It is also worthwhile to mention that Kondo \cite{Kondo2} has found examples of $\cat$ space $Y$ for which 
$\sup_{p\in Y} \delta(TC_pY) = 1$.


\begin{thebibliography}{99} 
\bibitem{Arzhantseva-Delzant} G.~Arzhantseva and T.~Delzant, Examples of random groups. Preprint. 
\bibitem{Bridson-Haefliger} M.~R.~Bridson and A.~Haefliger, 
{\em Metric spaces of non-positive curvature.} 
Springer-Verlag, Berlin, Heidelberg 1999. 
\bibitem{Brown} K.~S.~Brown, {\em Buildings.} 
Springer-Verlag, New York, Berlin, Heidelberg 1988.
\bibitem{Feit-Higman} W.~Feit and G.~Higman, 
The nonexistence of certain generalized polygons. {\em J.~Alg.} {\bf 1} (1964), 114--131. 
\bibitem{Ghys} E.~Ghys, Groupes Al\'{e}atoires [d'apr\`{e}s Misha Gromov,...]. 
{\em S\'{e}minaire Bourbaki}, 55\`{e}me ann\'{e}e, 2002--2003, ${\rm n}^\circ$916. 
\bibitem{Gromov1} M.~Gromov, Hyperbolic groups. In {\em Essays in group theory}, MSRI Publ. {\bf 8}, 
Springer, New York 1987, 75--263.  
\bibitem{Gromov2} M.~Gromov, Asymptotic invariants of infinite groups. In {\em Geometric group theory}, 
Vol. 2 (Sussex, 1991), London Math. Soc. Lecture Note Ser. {\bf 182}, Cambridge Univ. Press, Cambridge 1993, 1--295.  
\bibitem{Gromov3} M.~Gromov, Random walk in random groups. {\em GAFA} {\bf 13} (2003), 73--148. 
\bibitem{Izeki} H.~Izeki, A fixed-point property of finitely generated groups and 
an energy of equivariant maps. In {\em Proceedings for the first MSJ-SI ``Probabilisitc Approach 
to Geomerty''}, Adv.~Stud.~Pure Math. {\bf 57},  Math.~Soc.~Japan, Tokyo 2010, 171--188. 
\bibitem{Izeki-Kondo-Nayatani} H.~Izeki, T.~Kondo and S.~Nayatani, 
Fixed-point property of random groups. {\em Ann. Global Anal. Geom.} {\bf 35} (2009), 363--379. 
\bibitem{Izeki-Nayatani} H.~Izeki and S.~Nayatani, 
Combinatorial harmonic maps and discrete-group actions on Hadamard spaces. 
{\em Geom. Dedicata} {\bf 114} (2005), 147--188.
\bibitem{Jost} J.~Jost, 
Nonlinear Dirichlet forms. In {\em New Directions in Dirichlet Forms}, Studies in Advanced Mathematics
AMS/IP, 1998, 1--48.
\bibitem{Kondo1} T.~Kondo, Fixed-point theorems for random groups. In {\em Proceedings 
for the first MSJ-SI ``Probabilisitc Approach to Geomerty''}, Adv.~Stud.~Pure Math. {\bf 57},  Math.~Soc.~Japan, Tokyo 2010, 263--272. 
\bibitem{Kondo2} T.~Kondo, $\cat$ spaces and expanders. Math.~Z. {\bf 271} (2012), 343--355. 
\bibitem{Korevaar-Schoen}M.~Korevaar and R.~Schoen, 
Sobolev spaces and harmonic maps for metric space targets. 
{\em Comm.~Anal.~Geom.} {\bf 1} (1993), 561--659.
\bibitem{Lubotzky-Phillips-Sarnak} A.~Lubotzky, R.~Phillips and 
P.~Sarnak, Ramanujan graphs. {\em Combinatorica} {\bf 8} (1988), 261--277. 
\bibitem{Matousek} J.~Matou${\rm \check{s}}$ek, 
{\em Lectures on discrete geometry.} Graduate Texts in Mathematics 212, 
Springer-Verlag, New York 2002. 
\bibitem{Mayer} U.~F.~Mayer, Gradient flows on nonpositively curved metric spaces 
and harmonic maps. {\em Comm.~Anal.~and Geom.} {\bf 6} (1998), 199--253.
\bibitem{Naor-Silberman} A.~Naor and L.~Silberman, Poincar\'{e} inequalities, embeddings, and wild groups. Preprint, 
arXiv:1005.4084. 
\bibitem{Ollivier} Y.~Ollivier, 
{\em A January 2005 invitation to random groups.} Ensaios Matem\'{a}ticos {\bf 10}, 
Sociedade Brasileira de Matem\'{a}tica, Rio de Janeiro 2005. 
\bibitem{Silberman} L.~Silberman, 
Addendum to ``Random walk on random groups'' by M.~Gromov. {\em GAFA} {\bf 13} (2003), 147--177. 
\bibitem{Wang} M.-T.~Wang, Generalized harmonic maps and representations of discrete groups.
{\em Comm.~Anal.~Geom.} {\bf 8} (2000), 545--563.
\end{thebibliography}
\end{document}